\documentclass[11pt,letterpaper,reqno]{amsart}

\usepackage[margin={1.25in, 1.5in}]{geometry}
\usepackage[bottom]{footmisc}

\usepackage[dvipsnames]{xcolor}
\def\TEXTCOLOR {black}
\def\PAGECOLOR {white}
\def\NOTECOLOR {Purple}
\definecolor{linkBlue}{rgb}{.05,.25,.6}

\color{\TEXTCOLOR}
\pagecolor{\PAGECOLOR}

\usepackage{amsmath}
\usepackage{amssymb}
\usepackage{bbm}
\usepackage{mathtools}
\usepackage{amsfonts}
\usepackage{comment}
\usepackage{cancel}
\usepackage{enumitem}
\usepackage[mathscr]{euscript}
\usepackage{tikz-cd}
\usepackage[margin=1in]{caption}
\usepackage{mathrsfs}
\usepackage{appendix}
\usepackage{bibentry}
\usepackage{thmtools}
\usepackage{graphicx}
\graphicspath{{Images/}}
\usepackage{pgfplots}
\pgfplotsset{compat=1.18}
\usepackage[normalem]{ulem}
\usepackage[pdftex, psdextra]{hyperref}
\hypersetup{
	colorlinks=true,
	linkcolor=linkBlue,
	filecolor=magenta,
	urlcolor=linkBlue,
	citecolor=linkBlue}
\usepackage[nameinlink,noabbrev]{cleveref}

\theoremstyle{definition}
\newtheorem{definition}{Definition}[subsection]
\newtheorem*{definition*}{Definition}
\newtheorem{theorem}[definition]{Theorem}
\newtheorem*{theorem*}{Theorem}

\newtheorem{lemma}[definition]{Lemma}
\newtheorem*{lemma*}{Lemma}
\newtheorem{proposition}[definition]{Proposition}
\newtheorem{claim}[definition]{Claim}
\newtheorem{remark}[definition]{Remark}
\newtheorem{example}{Example}[subsection]
\newtheorem*{example*}{Example}
\newtheoremstyle{named}{}{}{\itshape}{}{\bfseries}{.}{.5em}{\thmnote{#3}}
\theoremstyle{named}
\newtheorem*{namedtheorem}{Theorem}

\let\tilde\widetilde
\renewcommand{\bar}[1]{\overline{\hspace{-1pt}{#1}\hspace{-.2pt}}}

\newcommand{\de}{\text{d}}

\newcommand{\R}{\mathbb{R}}

\let\C\relax
\newcommand{\C}{\mathbb{C}}

\renewcommand{\P}{\mathbb{P}}
\renewcommand{\k}{\mathbbm k}

\newcommand{\from}{\,:\,}
\newcommand{\into}{\hookrightarrow}

\newcommand{\acts}{\mathbin{{\text{\raisebox{.75em}{\rotatebox{-90}{$\circlearrowright$}}}}}}

\newcommand{\im}{\text{im}\,}
\newcommand{\sm}{\smallsetminus}

\newcommand{\CP}{\ensuremath{\mathbb {CP}}}

\DeclareMathOperator{\delbar}{\bar\partial}

\DeclareMathOperator{\Id}{Id}

\let\Im\relax

\DeclareMathOperator{\Im}{Im}
\DeclareMathOperator{\tr}{tr}
\DeclareMathOperator{\rk}{rk}

\DeclareMathOperator{\GL}{GL}
\DeclareMathOperator{\SL}{SL}

\DeclareMathOperator{\SU}{SU}
\DeclareMathOperator{\SO}{SO}
\DeclareMathOperator{\Hom}{Hom}
\DeclareMathOperator{\End}{End}
\DeclareMathOperator{\Rep}{Rep}

\DeclareMathOperator{\res}{res}
\DeclareMathOperator*{\twoslash}{\hspace{-.2em}/\hspace{-.2em}/\hspace{-.2em}}
\DeclareMathOperator*{\fourslash}{\twoslash\twoslash}
\newcommand{\bx}{\mathbf{x}}
\newcommand{\by}{\mathbf{y}}

\DeclareFontFamily{U}{wncy}{}
    \DeclareFontShape{U}{wncy}{m}{n}{<->wncyr10}{}
    \DeclareSymbolFont{mcy}{U}{wncy}{m}{n}
    \DeclareMathSymbol{\Sh}{\mathord}{mcy}{"58} 
\usetikzlibrary{decorations.pathreplacing}

\title[Symplectic Structures on Quiver Varieties and  Higgs Bundle Moduli Spaces]{Star-Shaped Nakajima Quiver Varieties, Parabolic Higgs Bundle Moduli Spaces, and their Holomorphic Symplectic Structures}

\author{Arya Yae}
\address{Department of Mathematics\\
University of Oregon, Eugene, OR 97403 USA}
\email{ayae@uoregon.edu}

\date{} 
\begin{document}

\begin{abstract}
    In this paper, we consider two classes of hyperk\"ahler manifolds: moduli spaces of central-Levi parabolic Higgs bundles on the punctured sphere and star-shaped Nakajima quiver varieties.  We produce a map $\mathcal T$ from a given star-shaped quiver variety $\mathcal X$ to a central-Levi parabolic Higgs bundle moduli space $\mathcal M$.  We verify that $\mathcal T$ preserves stability and we show that it is a homeomorphism onto the locus of Higgs bundles with trivial underlying holomorphic structure.  We then prove our main theorem: that $\mathcal T$ identifies the natural holomorphic symplectic structures on the two spaces.  This theorem generalizes work by Biswas, Florentino, Godinho, Mandini from the rank 2, full flag, strongly parabolic case to arbitrary rank, partial flag, and weakly parabolic cases---namely, those whose Higgs field residues project to the centers of their respective Levi subalgebras.
\end{abstract}

\maketitle

\tableofcontents

\newcommand{\app}{\text{app}}

\section{Introduction}
Nakajima quiver varieties are hyperk\"ahler manifolds introduced by Nakajima in \cite{Nak94} and studied further in \cite{KN90} in the 4-dimensional cases, where they are shown to model every hyperk\"ahler ALE space in Kronheimer's classification.  For $\Gamma=D_4,E_6,E_7,E_8$, the quivers used to model ALE-$\Gamma$ spaces are the ``star-shaped'' ones as depicted in \Cref{fig: quiver} with a central node and no cycles or double arrows.  On the other hand, the moduli spaces of parabolic $\SL(r,\C)$-Higgs bundles were introduced by Simpson \cite{Sim90}, generalizing the moduli spaces of Higgs bundles on unpunctured Riemann surfaces introduced by Hitchin in \cite{Hit87}.  As in the unpunctured case, under suitable conditions the parabolic Higgs bundle moduli spaces are smooth hyperk\"ahler manifolds \cite{Kon93,BiquardBoalch,CFW24}.

Our goal is as follows: given a moduli space $\mathcal M=\mathcal M(\sigma,\alpha)$ of parabolic $\SL(r,\C)$-Higgs bundles on $\CP^1$ with suitable parabolic weights, we produce a star-shaped Nakajima quiver variety $\mathcal X=\mathcal X(\tau,\beta)$ and a map $\mathcal T\from\mathcal X\to\mathcal{M}$.  This map was proposed by Rayan and Schaposnik \cite{RS21} in the strongly parabolic ($\sigma=0$) case, with further restrictions on the parabolic weights.\footnote{
	Namely $\alpha_i^{(k)}=0$ for $k\geq2$. Note that they do not verify that the map preserves stability.
}  We then prove
\begin{namedtheorem}[\Cref{thm: T preserves stability}]
	For suitable parabolic weights $\alpha$ and GIT weights $\beta$, the map $\mathcal T$ takes $\beta$-stable quiver representations to $\alpha$-stable Higgs bundles.  As $\mathcal T$ also takes equivalent quiver representations to gauge-equivalent Higgs bundles, it is well-defined.
\end{namedtheorem}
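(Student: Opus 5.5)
The plan is to make the construction of $\mathcal T$ explicit and then reduce $\alpha$-stability of the image to $\beta$-stability of the representation. Write a point of $\mathcal X$ as a representation with central space $\C^r$ and legs $V_i^{(1)}\supset\cdots$ of decreasing dimension, with maps $a_i^{(k)}$ toward the centre and $b_i^{(k)}$ away from it, satisfying the deformed moment-map equations with parameter $\tau$. The image $\mathcal T(\text{rep})$ should then be the following parabolic Higgs bundle:
\begin{itemize}
\item the underlying bundle is the trivial bundle $\C^r\times\CP^1$;
\item the Higgs field is $\Phi=\sum_i A_i\,\frac{\de z}{z-p_i}$ with $A_i=a_i^{(1)}b_i^{(1)}$, shifted by scalars if needed;
\item the flag at $p_i$ is given by the images of the compositions $a_i^{(1)}\cdots a_i^{(k)}$.
\end{itemize}
The central moment-map equation says $\sum_i A_i$ is scalar, which is exactly the condition for $\Phi$ to have no pole at $\infty$, or to be traceless after normalization. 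The leg equations say each $A_i$ preserves the flag at $p_i$ and acts on the graded pieces by the scalars dictated by $\tau$, so its residue projects to the centre of the Levi. I will verify these properties first.

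Well-definedness is the easier half. The groups $\GL(V_i^{(k)})$ at the leg nodes act by $a\mapsto ag^{-1}$, $b\mapsto gb$, and so leave every composite $a_i^{(1)}\cdots a_i^{(k)}$ (up to image) and every $A_i$ unchanged. The central $\GL(r)$ acts by simultaneous conjugation of the $A_i$ and translation of the flags, which is a constant gauge transformation of the trivial bundle. Hence equivalent representations have gauge-equivalent images.

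For stability, let $F\subset E$ be a $\Phi$-invariant subbundle of rank $0<s<r$. Since $E$ is trivial, $\deg F\le 0$, with equality if and only if $F$ is a constant subbundle $W\times\CP^1$. I split into two cases.
\begin{itemize}
\item \emph{Case $\deg F\le -1$.} The "suitable weights" hypothesis is that the total spread of the parabolic weights is less than $1$, suitably normalized. Then the weight contribution to $\operatorname{pardeg}F-\tfrac sr\operatorname{pardeg}E$ is smaller than $1$ in absolute value, so the inequality holds automatically.
\item \emph{Case $\deg F=0$.} Here $F=W\times\CP^1$ with $W$ invariant under every $A_i$, and the parabolic slope inequality becomes a linear inequality in the weights and in the dimensions $\dim(W\cap\text{flag}_i^{(k)})$. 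I build a subrepresentation from $W$: put $W$ at the centre and $b_i^{(k)}\cdots b_i^{(1)}W$ along the legs. A dual choice uses preimages under the $a$'s and corresponds to the quotient $E/F$.
\end{itemize}

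The main obstacle is the second case. I must check, using the leg moment-map relations and the invariance of $W$ under $A_i=a_i^{(1)}b_i^{(1)}$, that this collection is genuinely closed under all the $a$'s and $b$'s. I then need to show that its dimension vector, paired with $\beta$, reproduces exactly the parabolic-degree expression. That identification is where the dictionary between $\beta$ and $\alpha$ (consecutive differences of the $\alpha_i^{(k)}$) is forced. I will first try taking $\beta$ at leg node $(i,k)$ to be $\alpha_i^{(k+1)}-\alpha_i^{(k)}$ and $\beta$ at the centre fixed by the balancing $\beta\cdot\dim=0$. Then I check that strict King $\beta$-stability gives the strict parabolic inequality. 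I expect the delicate point to be that $\dim(b\text{-image})$ may differ from $\dim(W\cap\text{flag})$ when the maps are not injective. If so, I would compare the image and preimage constructions and use that, for the chosen sign of $\beta$, the inequality only improves.
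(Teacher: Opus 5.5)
There is a genuine gap in the $\deg F=0$ case: the subrepresentation you build points the wrong way. Putting $W$ at the centre and $b_i^{(k)}\cdots b_i^{(1)}W$ on the legs does give a subrepresentation; closure under the $a$'s follows from the leg relations and $A_i$-invariance, as you expect. But it is the \emph{smallest} subrepresentation with central space $W$. Every subrepresentation with central space $W$ lies inside the \emph{largest} one, $S_i^{(k)}=(a_i^{(1)}\cdots a_i^{(k)})^{-1}(W)$. The central contributions agree and all leg weights $\beta_i^{(k)}$ are positive, so your subrepresentation has $\tilde\beta$-weight at most that of the preimage one. The preimage one has weight exactly $\mathrm{pardeg}\,F$. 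Hence $\mathrm{pardeg}\,F>0$ gives no lower bound on the weight of yours, and your fallback (``for the chosen sign of $\beta$ the inequality only improves'') runs in the wrong direction.

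This failure is generic and has nothing to do with non-injective maps. Take $r=2$, full flags, $\tau=0$, and any line $W$ invariant under all the nilpotent residues $A_i=a_ib_i$. Then $b_iW=0$ for every $i$: if $W=\mathrm{im}\,a_i$ it is killed because $b_ia_i=0$, and otherwise $a_ib_iW\subseteq W\cap\mathrm{im}\,a_i=0$. So your subrepresentation is $W$ with zeros on every leg, of weight $\beta_\star<0$, even when $W$ destabilizes. Also, the preimage construction is not the one attached to $E/F$; it is the one attached to $F$ itself.

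What you need instead is the preimage subrepresentation, and it requires two ingredients your plan lacks. First, $\beta$-stability forces every map toward the centre to be injective. Otherwise the kernel of $a_i^{(k)}$, together with its preimages further out on that leg and zeros elsewhere, is a subrepresentation of positive weight. Injectivity is what makes the flags of $\mathcal T(\text{rep})$ have the prescribed dimensions, and it gives $\dim S_i^{(k)}=\dim(W\cap\mathrm{flag}_i^{(k)})$.

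Second, for the preimage collection the nontrivial check is closure under the maps \emph{away} from the centre. In the paper's notation, the leg moment-map equations (iterated via the commutation rule) give $X_i^{(k-1)}y_i^{(k-1)}v=(\varphi_i-\sigma_i^{(k)})X_i^{(k)}v$. This lies in $W$ whenever $X_i^{(k)}v\in W$, by $\varphi_i$-invariance of $W$.

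With these, $\beta_\star=\sum_i\alpha_i^{(\ell_i+1)}$ yields $\tilde\beta(S)=\mathrm{pardeg}\,F>0$, so the representation is unstable. Your well-definedness argument and the $\deg F\le -1$ case are fine. There ``suitable'' should be made concrete, e.g.\ as $\sum_{i,k}r_i^{(k)}\beta_i^{(k)}<1$, used together with $\alpha_i^{(\ell_i+1)}<0$.
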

We show in \Cref{thm: T is a homeomorphism onto its image} that $\mathcal T$ is a homeomorphism onto the locus $\mathcal M_0\subseteq\mathcal M$ consisting of Higgs bundles with trivial underlying holomorphic structure.  This leads to our main theorem:
\begin{namedtheorem}[\Cref{thm: holo symplectic forms agree}]
	Under the embedding $\mathcal T\from\mathcal X\into\mathcal M$, the holomorphic symplectic form $\Omega_\mathrm{Higgs}$ pulls back to $2\pi\Omega_\mathrm{quiv}$, where $\Omega_\mathrm{quiv}$ is the holomorphic symplectic form on $\mathcal X$.
\end{namedtheorem}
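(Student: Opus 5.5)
The strategy is to compute both holomorphic symplectic forms on tangent vectors at a point of $\mathcal X$ and match them term by term. Each will reduce to a sum of local contributions, one for each arm of the star-shaped quiver, equivalently one for each marked point $p_i\in\CP^1$.

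\textbf{Quiver side.} Fix a $\beta$-stable representation $(A,B)$ of the doubled quiver, with $A$ the maps along the arrows and $B$ the maps along the reversed arrows. The form $\Omega_{\mathrm{quiv}}$ descends from the flat form $\sum_{\text{arrows}}\tr(\delta A\wedge\delta B')$ on the representation space. On a tangent vector $(\dot A,\dot B)$ to the level set of the complex moment map it is given by the same trace expression, and it is independent of the choice of representatives modulo the complexified gauge directions. I will organise this sum arm by arm: arm $i$ contributes $\sum_k\tr(\dot A_i^{(k)}\dot B_i^{(k)\prime}-\dot A_i^{(k)\prime}\dot B_i^{(k)})$.

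\textbf{Higgs side.} Since $\mathcal T$ lands in $\mathcal M_0$, the underlying bundle is trivial, $E=\mathcal O^{r}$, and the Higgs field has the form $\Phi=\sum_i \frac{\Phi_i}{z-p_i}\,dz$. Here $\Phi_i$ is the residue built from arm $i$; presumably $\Phi_i=A_i^{(1)}B_i^{(1)}$ up to shift by a central term, with flags given by the images of the compositions along the arm. I will compute $\Omega_{\mathrm{Higgs}}$ through its hypercohomology description: a tangent vector is a class $(\dot{\bar\partial},\dot\Phi)$ in $\mathbb H^1$ of the deformation complex $C^\bullet:\ \mathrm{ParEnd}(E)\to \mathrm{SParEnd}(E)\otimes K(D)$, and
\[
\Omega_{\mathrm{Higgs}}\big((\eta_1,\phi_1),(\eta_2,\phi_2)\big)=\int_{\CP^1}\tr(\eta_1\wedge\phi_2-\eta_2\wedge\phi_1).
\]
The image of $d\mathcal T$ consists of deformations that keep the bundle trivial but move the residues and flags. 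These must be written in Dolbeault form. I will cover $\CP^1$ by small discs $U_i$ around the $p_i$ together with $U_0=\CP^1\sm\{p_i\}$. On $U_i$ the flag variation is realised by a local gauge $g_i$, namely $\xi_i=\dot g_i g_i^{-1}$ determined by $\dot A_i,\dot B_i$, with $\xi_0=0$ on $U_0$. A bump function $\chi_i$ supported in $U_i$ then gives $\eta=\sum_i\bar\partial\chi_i\,\xi_i$, and $\phi$ is $\dot\Phi$ corrected by $\sum_i[\chi_i\xi_i,\Phi]$.

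Substituting into the integral, the $\eta\wedge\phi$ terms are supported on annuli around the $p_i$. Stokes' theorem turns each of them into $\oint_{\partial U_i}\tr(\xi\,\phi)$, and the residue theorem evaluates this as $2\pi\sqrt{-1}$ times a residue. That residue is a trace pairing of $\xi_i$ with $\dot\Phi_i$ plus a term of the form $\tr(\xi_{i,1}[\xi_{i,2},\Phi_i])$. This is the source of the factor $2\pi$ in the statement, up to the normalisation $\sqrt{-1}$ that I expect the paper's conventions to absorb.

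\textbf{Matching and main obstacle.} The final step is to show that each local residue equals the arm-$i$ quiver contribution. This is a linear-algebra identity: it says that the symplectic form on the coadjoint orbit or parabolic cotangent bundle $T^*(G/P)$ (its Kirillov--Kostant--Souriau form) matches the quiver form on the arm, which is precisely the arm's description of this orbit as a hyperk\"ahler quotient. I would prove it by induction along the arm, using the moment map equations at the intermediate nodes, $A^{(k)\prime}B^{(k)\prime}-B^{(k+1)}A^{(k+1)}=\tau$-term, to telescope the sum. I expect this identity to be the main obstacle, in particular:
\begin{itemize}
\item tracking the central (weakly parabolic) shifts so that the central terms cancel;
\item checking that the choices of local gauges $g_i$ and bump functions change the answer only by exact terms.
\end{itemize}
The central-node moment map, $\sum_i\Phi_i=0$ up to centre, guarantees that $\Phi$ has no pole at $\infty$. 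It also makes the Dolbeault representative well defined, so no extra contribution at $\infty$ arises.
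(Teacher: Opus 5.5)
Your proposal follows the paper's proof essentially step for step. Your $\sum_i\chi_i\xi_i$ is a particular instance of the paper's global section $\dot\nu$ inducing the first variation of the flags. Stokes' theorem then reduces $\Omega_\mathrm{Higgs}$ to $2\pi\sum_i\tr\bigl(\dot\nu_1(p_i)\dot\varphi_{2,i}-\dot\nu_2(p_i)\dot\varphi_{1,i}+[\dot\nu_2(p_i),\dot\nu_1(p_i)]\varphi_i\bigr)$. The resulting puncture-by-puncture identity is proved by exactly the induction along each arm that you anticipate, using the moment map equations and their linearizations.

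What the paper supplies that you leave implicit is the concrete choice $\dot\nu(p_i)=\sum_{k=1}^{\ell_i}X_i^{(k+1)}\dot x_i^{(k)}X_i^{(-k)}$, where $X_i^{(k)}=x_i^{(\ell_i)}\cdots x_i^{(k)}$ and $X_i^{(-k)}$ is its left inverse; this choice is what makes the telescoping tractable. Also, the quadratic term enters with coefficient one, coming from $\chi_i\delbar\chi_i=\tfrac12\delbar(\chi_i^2)$. A literal $\oint\tr(\xi\,\phi)$ with the corrected $\phi$ would give coefficient two, and the local identity would then fail.
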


\subsection*{Relation to other work}
In case of rank 2 strongly parabolic Higgs bundles with full flags, the corresponding quiver variety $\mathcal X$ is the so-called ``hyperpolygon'' space studied by Konno in \cite{Kon02}.  In this case, Godinho and Mandini \cite{GM11} describe the map $\mathcal T$ and prove the analogue of \Cref{thm: T preserves stability}.  In \cite{BFGM15}, Biswas, Florentino, Godinho, and Mandini prove that $\mathcal T$ preserves the holomorphic symplectic structure.  In \cite{FY26}, the author and Fredrickson describe a certain degenerate limit under which the full hyperk\"ahler structure on $\mathcal M$ converges to the hyperk\"ahler structure on $\mathcal X$ (the underlying holomorphic symplectic structure remains fixed throughout this degeneration).  This hyperk\"ahler metric degeneration was independently proved by Heller, Heller, and Meneses in \cite{HHM25} using different techniques.

The present work serves to generalize the results of \cite{GM11} and \cite{BFGM15} in three ways: (1) generalize to higher ranks, (2) generalize to parabolic structures with partial flags, and (3) remove the strongly parabolic condition (which required $\res_p\varphi$ to be strictly block upper triangular with respect to the flag).  This paper is a necessary step toward generalizing \cite{FY26}, which shall be the focus of upcoming work.

This project fits into a larger story about the so-called ``gravitational instantons'' of type ALE and ALG and their higher dimensional analogues.  It is conjectured \cite{aim, CherkisVideo} that all ALG gravitational instantons have constructions as moduli spaces of Higgs bundles.  In the rank 2 case with four marked points on $\CP^1$, the parabolic Hitchin moduli spaces are four-dimensional---in the strongly parabolic case ($\sigma=0$) they are shown to be ALG-$D_4$ instantons in \cite{FMSW21}, and by extending to the weakly parabolic cases the paper \cite{FMSW26} and a forthcoming paper by the same authors use a Torelli-type theorem to show that every ALG-$D_4$ instanton can be modeled this way.

Our generalization to higher rank with partial flags include quiver varieties and Hitchin moduli spaces which should correspond to the $E_6,E_7,E_8$ type instantons, as well as higher dimensional quiver varieties and parabolic Hitchin moduli spaces---the former\footnote{
	In fact, Dimakis and Rochon prove that \emph{all} Nakajima quiver varieties have quasi-asymptotically conical hyperk\"ahler structures.
} are quasi-asymptotically conical \cite{DR25}, and the latter should be some higher-dimensional analogue of ALG$^{(*)}$ spaces.  Our generalization to include the weakly parabolic case is also significant: for example, in the 12-parameter families of ALG-$D_4$ and ALE-$D_4$ instantons, only 4-dimensional subfamilies can be modeled using strongly parabolic Higgs bundles and quiver varieties with complex moment $\mu_\C=0$, respectively.  Therefore, this paper and its sequel aims to make substantial progress toward a conjecture by Cherkis \cite{CherkisVideo} which states that all of these ALG instantons of the $A_k, D_k, E_6, E_7, E_8$ type should have degenerations of the full hyperk\"ahler structure to that of the corresponding ALE spaces.

In restricting our focus to star-shaped Nakajima quiver varieties, we do not consider quivers with double edges or loops.  Among the excluded quivers are those which should correspond to moduli spaces of wild Higgs bundles on $\CP^1$ as defined in \cite{BiquardBoalch} (doubled edges), and ones which conjecturally correspond to parabolic Hitchin moduli spaces on higher genus surfaces \cite{RS21} (loops on the central node).  As a result, the gravitational instantons of type ALG-$E_k^*$\footnote{
	Cherkis sometimes refers to these as ``sha''-spaces, since the Cyrillic letter `$\Sh$' is a reflection of `E'.
} (those dual to ALG-$E_k$ instantons) are outside the scope of this project.

\subsection*{Outline of Paper}
In \Cref{sec: quiver varieties} we review the construction of star-shaped Nakajima quiver varieties and discuss some stability results.  In \Cref{sec: Higgs bundle moduli space construction} we review the construction of the parabolic Hitchin moduli spaces and describe the holomorphic symplectic structure.  In \Cref{sec: the map} we construct the map $\mathcal T$ from the quiver varieties to the parabolic Hitchin systems, prove it is well-defined by showing stability is preserved, and prove that its image is the subspace consisting of Higgs bundles with trivial underlying holomorphic structure.  Finally, in \Cref{sec: holomorphic symplectic forms} we prove the main theorem---first with a clean proof in the strongly parabolic case by comparing the natural tautological 1-forms, and then with a laborious proof in the general case.

\subsection*{Acknowledgements}
The author would like to thank Laura Fredrickson for many useful discussions, and Nicolas Addington for providing references on King stability.

\section{Star-Shaped Nakajima Quiver Varieties}\label{sec: quiver varieties}

\subsection{Construction}\label{subsec: quiver variety construction}

We follow Nakajima's original construction \cite{Nak94}.
\begin{figure}
	$$
	\scalebox{1.0}{
	\begin{tikzcd}[ampersand replacement=\&, row sep=.3cm, column sep=1cm]
				\&\&r_1^{(\ell_1)}\arrow[lldd]	\&\cdots\arrow[l]	\&r_1^{(2)}\arrow[l]	\&r_1^{(1)}\arrow[l]\\
				\&\&r_2^{(\ell_2)}\arrow[lld]		\&\cdots\arrow[l]	\&r_2^{(2)}\arrow[l]	\&r_2^{(1)}\arrow[l]\\
		r_\star	\&\&\vdots\\
				\&\&r_n^{(\ell_n)}\arrow[llu]		\&\cdots\arrow[l]	\&r_n^{(2)}\arrow[l]	\&r_n^{(1)}\arrow[l]
	\end{tikzcd}
	}$$
	\caption{A star-shaped quiver $Q$ with dimension vector $\vec r$.}
	\label{fig: quiver}
\end{figure}
Let $Q$ be the quiver shown in \Cref{fig: quiver} with the distinguished dimension vector $\vec r=\left(r_\star,(r_i^{(k)})_{i,k}\right)$ where $r_i^{(k)}<r_i^{(k+1)}$ for all $i=1,\dots,n$ and $k=1,\dots,\ell_i$ (to simplify expressions we take $r_i^{(\ell_i+1)}:=r_\star$).  We call such a a quiver \emph{star-shaped}, and such a dimension vector \emph{branch-increasing}.  The vector space of representations with dimension vector $\vec r$ is
\begin{equation}
	\Rep_{\vec r} Q=\!\bigoplus_{\substack{i=1,\dots,n\\k=1,\dots,\ell_i}}\!\Hom\left(\C^{r_i^{(k)}},\C^{r_i^{(k+1)}}\right),
\end{equation}
and $X:=T^*\Rep_{\vec r} Q$ is the cotangent bundle.  We denote elements $(\bx,\by)\in X$ where $\bx\in\Rep_{\vec r} Q$ and $\by\in T_\bx^*\Rep_{\vec r} Q$, each of which split into components $x_i^{(k)}\in\Hom\left(\C^{r_i^{(k)}},\C^{r_i^{(k+1)}}\right)$ and $y_i^{(k)}\in\Hom\left(\C^{r_i^{(k+1)}},\C^{r_i^{(k)}}\right)$ for $i=1,\dots,n$ and $k=1,\dots,\ell_i$.  Individually, each $x_i^{(k)}$ is a morphism along the corresponding arrow in \Cref{fig: quiver}, and each $y_i^{(k)}$ is a morphism in the reverse direction.  Thus, we can write $X=\Rep_{\vec r}\tilde Q$ where $\tilde Q$ is the doubled quiver with arrows going both directions.  We can also regard $\bx$ and $\by$ as endomorphisms of $V:=\C^{r_\star}\oplus\bigoplus_{i,k}\C^{r_i^{(k)}}$ supported in blocks according to the arrows in $Q$ and the reversed quiver $\bar Q$, respectively.  We call such a $\bx\in\End(V)$ \emph{$Q$-supported} and such a $\by\in\End(V)$ \emph{$\bar Q$-supported}.

This $X$ has a standard hyperk\"ahler structure $(X,J_1,J_2,J_3,\omega,\Omega,g)$.  In particular, the holomorphic symplectic form $\Omega$ is the exterior derivativer of the tautological 1-form $L$ on $X$ is given by
\begin{equation}\label{eqn: quiver tautological 1 form}
	L_{(\bx,\by)}(\dot\bx,\dot\by)=\tr(\by\dot\bx).
\end{equation}
Note that this description favors a distinguished complex structure $J_1$ and the natural 2-forms $\omega=\omega_{J_1}$ associated to $J_1$, and the holomorphic symplectic form decomposes as $\Omega=\omega_{J_2}+i\omega_{J_3}$.

\begin{figure}
	\[
	\scalebox{1.0}{
	\begin{tikzcd}[ampersand replacement=\&, row sep=.8cm, column sep=1.2cm]
			\&\&r_1^{(\ell_1)}\arrow["x_1^{(\ell_1)}", lld]\arrow["y_1^{(\ell_1-1)}", r, bend left=20]		\&\cdots\arrow["x_1^{(\ell_1-1)}",l]\arrow["y_1^{(2)}", r, bend left=20]	\&r_1^{(2)}\arrow["x_1^{(2)}", l]\arrow["y_1^{(1)}", r, bend left=20]	\&r_1^{(1)}\arrow["x_1^{(1)}", l]\\
		r_\star\arrow["y_1^{(\ell_1)}", rru, bend left=20]\arrow["y_n^{(\ell_n)}", rrd, bend left=20]	\&\&\vdots\\
			\&\&r_n^{(\ell_n)}\arrow["x_n^{(\ell_n)}",llu]\arrow["y_n^{(\ell_n-1)}", r, bend left=20]		\&\cdots\arrow["x_n^{(\ell_n-1)}", l]\arrow["y_n^{(2)}", r, bend left=20]	\&r_n^{(2)}\arrow["x_n^{(2)}", l]\arrow["y_n^{(1)}", r, bend left=20]	\&r_n^{(1)}\arrow["x_n^{(1)}", l]
	\end{tikzcd}
	}
	\]
	\caption{A representation $(\bx,\by)$ of the quiver.}
	\label{fig: a quiver representation}
\end{figure}

The space $X$ respectively admits $\Omega$- and $\omega$-Hamiltonian actions of the groups of complex and unitary transformations
\begin{align}
	G_\C
		&=\SL(r_\star,\C)\times\!\prod_{\substack{i=1,\dots,n\\k=1,\dots,\ell_i}}\!\GL(r_i^{(k)},\C),\\
	G
		&=\SU(r_\star)\times\!\prod_{\substack{i=1,\dots,n\\k=1,\dots,\ell_i}}\!U(r_i^{(k)}).
\end{align}
Namely, regarding an element $g\in G_\C$ as a \emph{$Q$-block-diagonal} endomorphism of $V$, the action is $(\bx,\by)\mapsto(g\bx g^{-1},g\by g^{-1})$.  Writing the components as $g=\big(g_\star,(g_i^{(k)})_{i,k}\big)$ and taking $g_i^{(\ell_i+1)}$ to be $g_\star$, the action is
	$$x_i^{(k)}\mapsto g_i^{(k+1)}x_i^{(k)}(g_i^{(k)})^{-1},
		\qquad y_i^{(k)}\mapsto g_i^{(k)}y_i^{(k)}(g_i^{(k+1)})^{-1}.$$
The moment maps $\mu_\C\from X\to\mathfrak g_\C^*$ and $\mu_\R\from X\to\mathfrak g^*$ for these actions (see \cite{Nak94}) can be packaged very nicely as $Q$-block diagonal endomorphisms of $V$:
\begin{align}
	\mu_\C(\bx,\by)
		&=\varpi([\by,\bx]),
			\label{eqn: quiver complex moment map}\\
	\mu_\R(\bx,\by)
		&=\varpi\left([\bx^\dagger,\bx]+[\by^\dagger,\by]\right).
			\label{eqn: quiver real moment map}
\end{align}
Here, $\varpi$ is the restriction map\footnote{
	Under the identification $\mathfrak g_\C\cong\mathfrak g_\C^*$ given by the trace pairing $\langle A,B\rangle=\tr(AB)$, this $\varpi$ becomes the orthogonal projection with respect to the Frobenius inner product.
	}
		\[\mathfrak{gl}(r_\star,\C)^*\times\!\prod_{\substack{i=1,\dots,n\\k=1,\dots,\ell_i}}\!\mathfrak{gl}(r_i^{(k)},\C)^*\to\mathfrak{sl}(r_\star,\C)^*\times\!\prod_{\substack{i=1,\dots,n\\k=1,\dots,\ell_i}}\!\mathfrak{gl}(r_i^{(k)},\C)^*\]
and the Lie bracket is the standard one on $\End(V)$.  The moment maps decompose into $\mu_\C=\mu_{\C,\star}+\sum_{i,k}\mu_{\C,i}^{(k)}$ and $\mu_\R=\mu_{\R,\star}+\sum_{i,k}\mu_{\R,i}^{(k)}$ where
\begin{align}
	\mu_{\C,\star}(\bx,\by)
		&=\sum_i-(x_i^{(\ell_i)}y_i^{(\ell_i)})_0\in\mathfrak{sl}(r_\star,\C),
			\label{eqn: mu SL}\\
	\mu_{\C,i}^{(k)}(\bx,\by)
		&=y_i^{(k)}x_i^{(k)}-x_i^{(k-1)}y_i^{(k-1)}\in\mathfrak{gl}(r_i^{(k)},\C),
			\label{eqn: mu C i}\\
	\mu_{\R,\star}(\bx,\by)
		&=\frac{\mathbbm i}2\left(-x_i^{(k)}(x_i^{(k)})^\dagger+(y_i^{(k)})^\dagger y_i^{(k)}\right)_0\in\mathfrak{su}(r_\star),
			\label{eqn: mu SU}\\
	\mu_{\R,i}^{(k)}(\bx,\by)
		&=\frac{\mathbbm i}2\left((x_i^{(k)})^\dagger x_i^{(k)}-y_i^{(k)}(y_i^{(k)})^\dagger-x_i^{(k-1)}(x_i^{(k-1)})^\dagger+(y_i^{(k-1)})^\dagger y_i^{(k-1)}\right)\in\mathfrak u(r_i^{(k)}).
			\label{eqn: mu R j}
\end{align}
where the subscript 0 means we take the trace-free part, i.e.\ $A_0=\frac{\tr A}{r^\star}\mathrm{Id}$, and we identify the Lie algebras with their duals via the trace pairing.

We make the natural identification of the centers of the Lie algebras $Z(\mathfrak g_\C)\cong\bigoplus_i\C^{\ell_i}$ and $Z(\mathfrak g)\cong\bigoplus_i\R^{\ell_i}$.  For $\tau=(\tau_i^{(k)})_{i,j}\in Z(\mathfrak g_\C)$ and $\beta=(\beta_i^{(k)})_{i,k}\in\bigoplus_i\R_{>0}^{\ell_i}\subset Z(\mathfrak g)$, we define the hyperkähler quotient
	$$\mathcal X(\tau,\beta)=X\fourslash_{\tau,\beta}G=\left(\mu_\C^{-1}(\tau)\cap\mu_\R^{-1}(\beta)\right)/G.$$
The full hyperk\"ahler structure descends to the quotient, but we remark that the tautological 1-form $L$ on $X$ only descends to a canonical holomorphic 1-form on $\mathcal X(\tau,\beta)$ if $\tau=0$.

\begin{example}[$n$-sided hyperpolygon space]
	In the case where $(Q,\vec r)$ has $r_\star=2$ and full flags, $\mathcal X_{(Q,\vec r)}(0,\beta)$ is the $n$-sided hyperpolygon space introduced in \cite{Kon02} and studied further in \cite{HP04}.  As described in \cite[Remark 2.1.6]{FY26}, $\mathcal X_{(Q,\vec r)}(0,\beta)$ can be viewed as the moduli space of ``telescoping polygons''\footnote{
		Each side of the telescoping polygon is also equipped with a phase, and these phases are subject to additional constraints.
	} in $\R^3$, up to the action of $\SO(3)$.
	\begin{figure}[h]
	\centering
	\resizebox{4cm}{!}{
	\begin{tikzpicture}
		\tikzstyle{every node}=[font=\LARGE]
		\draw [ fill=black , line width=1pt ] (16,2) circle (.2cm);
		\draw [line width=4pt] (16,2) -- (10,5);
			\draw [decorate, decoration = {brace,amplitude=12pt,raise=12pt}] (16,2) -- (10,5)
				node[midway,xshift=-6ex,yshift=-4em,font=\Huge]{\scalebox{1.2}{$\sqrt2\beta_1$}};
		\draw [line width=4pt, ->, >=Stealth, dashed] (10,5) -- (5,7.5);
			\node[font=\Huge, xshift=0em, yshift=3.5em] at (10.5,4.75) {\scalebox{1.2}{$v_1$}};
		\draw [ color=green!40!black, line width=4pt, ->, >=Stealth, dashed] (0,10) -- (5,7.5);
			\node[font=\Huge, xshift=2em, yshift=2em, color=green!40!black] at (2.5,8.75) {\scalebox{1.2}{$w_1$}};

		\draw [ fill=black , line width=1pt ] (0,10) circle (.2cm);
		\draw [line width=4pt] (0,10) -- (3,16);
			\draw [decorate, decoration = {brace,amplitude=12pt,raise=12pt}] (0,10) -- (3,16)
				node[midway,xshift=-15ex,yshift=2.5em,font=\Huge]{\scalebox{1.2}{$\sqrt2\beta_2$}};
		\draw [line width=4pt, ->, >=Stealth, dashed] (3,16) -- (5,20);
			\node[font=\Huge, xshift=3.5em, yshift=0em] at (2.5,15) {\scalebox{1.2}{$v_2$}};
		\draw [ color=green!40!black, line width=4pt, ->, >=Stealth, dashed] (7,24) -- (5,20);
			\node[font=\Huge, xshift=2em, yshift=-3em, color=green!40!black] at (6,22) {\scalebox{1.2}{$w_2$}};

		\draw [ fill=black , line width=1pt ] (7,24) circle (.2cm);
		\draw [line width=4pt] (7,24) -- (10.5,24);
			\draw [decorate, decoration = {brace,amplitude=12pt,raise=12pt}] (7,24) -- (10.5,24)
				node[midway,xshift=0ex,yshift=5em,font=\Huge]{\scalebox{1.2}{$\sqrt2\beta_3$}};
		\draw [line width=4pt, ->, >=Stealth, dashed] (10.5,24) -- (12.5,24);
			\node[font=\Huge, xshift=3em, yshift=-2.5em] at (9.25,24) {\scalebox{1.2}{$v_3$}};
		\draw [ color=green!40!black, line width=4pt, ->, >=Stealth, dashed] (14.5,24) -- (12.5,24);
			\node[font=\Huge, xshift=0em, yshift=-2.5em, color=green!40!black] at (13.5,24) {\scalebox{1.2}{$w_3$}};

		\draw [ fill=black , line width=1pt ] (14.5,24) circle (.2cm);
		\draw [line width=4pt] (14.5,24) -- (15, 24-22/3);
			\draw [decorate, decoration = {brace,amplitude=12pt,raise=12pt}] (14.5,24) -- (15, 24-22/3)
				node[midway,xshift=14ex,yshift=.2em,font=\Huge]{\scalebox{1.2}{$\sqrt2\beta_4$}};
		\draw [line width=4pt, ->, >=Stealth, dashed] (15, 24-22/3) -- (15.5, 24-2*22/3);
			\node[font=\Huge, xshift=-3em, yshift=-1em] at (15, 24-22/3) {\scalebox{1.2}{$v_4$}};
		\draw [ color=green!40!black, line width=4pt, ->, >=Stealth, dashed] (16,2) -- (15.5, 24-2*22/3);
			\node[font=\Huge, xshift=-3em, yshift=4em, color=green!40!black] at (15.7, 6) {\scalebox{1.2}{$w_4$}};

		\draw [line width=4pt, loosely dotted] (16,2) -- (7,24);x
		\draw [line width=4pt, blue] (11.5,15.5) arc (65:160:1.5);
	\end{tikzpicture}
	}
	\caption{A hyperpolygon shown as vectors in $\mathfrak{su}(2)\cong\R^3$}
	\label{fig: hyperpolygon}
	\end{figure}
\end{example}

The higher rank geometric descriptions of $\mu_\C^{-1}(\tau)\cap\mu_\R^{-1}(\beta)$ are much more complicated.  Although the moment map conditions are only quadratic equations, it seems unlikely that one could use these descriptions to explicitly describe the hyperk\"ahler metric on $\mathcal X$ even in the case of hyperpolygon space.

\subsection{Stability}
By the Kempf-Ness theorem, we have the identification
	$$\mathcal X(\tau,\beta)=\mu_\C^{-1}(\tau)^{\chi_\beta-\text{st}}\twoslash G_\C$$
as holomorphic symplectic manifolds for the following notion of $\chi_\beta$-stability.

\begin{definition}[\cite{King94}]
	Let $\chi_\beta\from G\to\C$ be the character given by
		$$\chi_\beta(g)=\!\prod_{\substack{i=1,\dots,n\\k=1,\dots,\ell_i}}\!\det(g_i^{(k)})^{\beta_i^{(k)}}.$$
	Let $L\to X$ be a trivial rank 1 bundle, and let $G\acts L$ by $g\cdot((\bx,\by),l)=(g\cdot(\bx,\by),\chi(g)\,l)$.  Then $(\bx,\by)\in\mu_\C^{-1}(\tau)$ is \emph{$\chi_\beta$-semistable} if the closure of the orbit of $((\bx,\by),1)$ in $L$ does not intersect the image $Z$ of the zero section.  If in addition the stabilizer of $(\bx,\by)$ is finite, then $(\bx,\by)$ is \emph{$\chi_\beta$-stable}.
\end{definition}

To match the setting of \cite{King94}, we extend the $G_\C$-action to the larger Lie group $\tilde G_\C=\GL(r_\star,\C)\!\times\!\prod_{\substack{i=1,\dots,n\\k=1,\dots,\ell_i}}\!\GL(r_i^{(k)},\C)$, and extend the character as follows.
\begin{definition}[\cite{King94}]\label{def: extended GIT weight vector}
	We extend $\chi_\beta$ to a character $\chi_{\tilde\beta}$ on $\tilde G_\C$ by picking the weight
	\begin{equation}\label{eqn: beta star}
		\beta_\star=-\frac1{r_\star}\sum_{\substack{i=1,\dots,n\\k=1,\dots,\ell_i}}r_i^{(k)}\beta_i^{(k)}
	\end{equation}
	on the central node $\star$.  We call the tuple $\tilde\beta=\left(\beta_\star,(\beta_i^{(k)})_{i,k}\right)$ the \emph{extended GIT weight vector}.  Given a $\tilde Q$-representation $S$ with dimension vector $\vec s=\left(s_\star,(s_i^{(k)})_{i,k}\right)$, the $\tilde\beta$-weight of $S$ is
		$$\tilde\beta(S)=\vec s\cdot\tilde\beta=s_\star\beta_\star+\sum_{i,k}s_{S,i}^{(k)}\beta_i^{(k)}.$$
\end{definition}
Because of our choice of $\beta_\star$, since $(\bx,\by)$ has dimension vector $\vec r$ we have $\tilde\beta((\bx,\by))=0$.

\begin{definition}[\cite{King94}]
	A representation $R$ of $Q$ is $\tilde\beta$-semistable (resp.\ stable) if both of the following hold:
	\begin{itemize}
		\item $\tilde\beta(R)=0$;
		\item for all nonzero proper subrepresentations $S\subset R$, we have $\tilde\beta(S)\leq\tilde\beta(R)$ (resp.\ $\tilde\beta(S)<\tilde\beta(R))$.
	\end{itemize}
\end{definition}

\begin{proposition}[\cite{King94}]\label{prop: King stability}
	A representation $(\bx,\by)\in\Rep_{\vec r}Q$ is $\chi_\beta$-semistable (resp.\ stable) if and only if it is $\beta$-semistable (resp.\ stable).
\end{proposition}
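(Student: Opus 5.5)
The plan is to follow King's argument via the Hilbert–Mumford criterion, adapted to the extended group $\tilde G_\C$. Since the scalar subgroup $\C^\times\subset\tilde G_\C$ (acting by $t\,\mathrm{Id}_V$) acts trivially on $X$, and $\chi_{\tilde\beta}(t\,\mathrm{Id})=t^{\tilde\beta(\vec r)}=1$ by the choice of $\beta_\star$ in \eqref{eqn: beta star}, $\chi_{\tilde\beta}$ descends to $\tilde G_\C/\C^\times$. I will first check that $\chi_{\tilde\beta}$-(semi)stability for $\tilde G_\C$ agrees with $\chi_\beta$-(semi)stability for $G_\C$. The group $G_\C\to\tilde G_\C/\C^\times$ is surjective with finite kernel, namely the $r_\star$-th roots of unity, and $\chi_{\tilde\beta}$ restricts to $\chi_\beta$ on $G_\C$, up to a sign convention: the central factor contributes $\det(g_\star)^{\beta_\star}=1$. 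Hence orbit closures in $L$ and finiteness of stabilizers coincide. Integrality is harmless, because replacing $\beta$ by a positive multiple does not change either notion, so I may assume $\beta$ integral (or rational).

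For semistability I will apply the Hilbert–Mumford criterion for the lifted action on $L$. The point $((\bx,\by),1)$ has orbit closure disjoint from the zero section if and only if, for every one-parameter subgroup $\lambda\colon\C^\times\to\tilde G_\C$ such that $\lim_{t\to0}\lambda(t)\cdot(\bx,\by)$ exists, we have $\langle\chi_{\tilde\beta},\lambda\rangle\ge0$ (with King's sign convention; I will fix the signs so that the final inequality reads $\tilde\beta(S)\le0$). Given $\lambda$, decompose $V=\bigoplus_m V_m$ into weight spaces. The existence of the limit means each arrow map sends $V_{\ge m}$ into $V_{\ge m}$, so the filtration $V_{\ge m}$ is a filtration by subrepresentations $S_m$ of the doubled quiver $\tilde Q$. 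The pairing works out to $\langle\chi_{\tilde\beta},\lambda\rangle=\sum_m m\,\tilde\beta(V_m)=\sum_m\tilde\beta(S_m)$ after Abel summation, using $\tilde\beta(R)=0$. Conversely, a single subrepresentation $S$ gives a one-parameter subgroup with weight $1$ on $S$ and weight $0$ on a complement, whose limit exists. From this we get semistability if and only if $\tilde\beta(S)\le0$ for all $S$.

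For stability I will use King's refinement, which is the step I expect to be the main obstacle. The point is that $(\bx,\by)$ is $\chi$-stable exactly when the only one-parameter subgroups $\lambda$ with a limit and $\langle\chi,\lambda\rangle=0$ lie in the scalar $\C^\times$. I will get this by combining properness of the orbit map with finiteness of the stabilizer mod scalars.
\begin{itemize}
\item If some proper nonzero $S$ has $\tilde\beta(S)=0$, the associated $\lambda$ is non-scalar with zero pairing. Its limit point is then a representation $S\oplus R/S$ lying in the orbit closure within the $\chi$-semistable locus, so the orbit is not closed in $L\setminus Z$, or the stabilizer grows. Either way stability fails.
\item Conversely, strict inequality for all proper $S$ forces $\mathrm{End}$ of the representation to be $\C$: the kernel and image of an endomorphism are subrepresentations, which gives a contradiction unless the endomorphism is scalar. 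So the stabilizer in $\tilde G_\C$ is $\C^\times$ and the stabilizer in $G_\C$ is finite. Closedness of the orbit follows from the strict Hilbert–Mumford inequality.
\end{itemize}
Finally, I will note that all subrepresentations are taken for $\tilde Q$ (i.e.\ preserved by both $\bx$ and $\by$), since the group acts on the full pair $(\bx,\by)$.
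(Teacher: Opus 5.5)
Your route is King's own Hilbert--Mumford argument, and the paper relies on nothing more (it gives no proof and simply cites King). The following steps are all correct:
\begin{itemize}
\item the reduction from $G_\C$ to $\tilde G_\C/\C^\times$, via a surjection whose kernel is the $r_\star$-th roots of unity;
\item the identification of one-parameter subgroups with limits as filtrations by $\tilde Q$-subrepresentations;
\item the Abel summation $\langle\chi_{\tilde\beta},\lambda\rangle=\sum_n\tilde\beta(S_n)$;
\item the kernel/image argument giving $\End(R)=\C$ for $\beta$-stable $R$.
\end{itemize}
With the paper's convention $g\cdot l=\chi(g)\,l$, the criterion comes out directly as $\langle\chi_{\tilde\beta},\lambda\rangle\le 0$, i.e.\ $\tilde\beta(S)\le 0$, so no sign adjustment is needed.

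The genuine problem is in your first stability bullet. You conclude that stability fails because the orbit of $((\bx,\by),1)$ is not closed. But the paper defines $\chi_\beta$-stable as ``semistable with finite stabilizer,'' with no closed-orbit clause. Under that literal definition, the implication $\chi_\beta$-stable $\Rightarrow$ $\beta$-stable is false at non-generic $\beta$. Suppose $0\to S\to R\to T\to 0$ is a non-split extension of non-isomorphic $\beta$-stable representations with $\tilde\beta(S)=\tilde\beta(T)=0$. Then every endomorphism of $R$ preserves $S$, since $\Hom(S,T)=0$. It acts by the same scalar on $S$ and on $T$, since otherwise it would split the sequence. Hence it is scalar, since $\Hom(T,S)=0$. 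So $R$ is semistable and its stabilizer in $G_\C$ is just the scalar $r_\star$-th roots of unity, which is finite, yet $S$ violates the strict inequality. This is exactly why King's definition of $\chi$-stability requires the orbit of $((\bx,\by),1)$ to be closed in $L$, in addition to the finite stabilizer.

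You should state explicitly that you are proving the statement for King's definition, or restrict to generic $\beta$, where there are no strictly semistable points and the two definitions agree. With King's definition your dichotomy works, but state the second alternative precisely. If the limit $S\oplus R/S$ lies in the orbit, then the stabilizer of $(\bx,\by)$ is conjugate to that of the limit. That stabilizer contains the non-scalar torus $\lambda(\C^\times)$, so it is infinite modulo scalars.

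A smaller point: your integrality reduction only handles rational $\beta$. You also need $\beta_\star\in\mathbb Z$, which you get by scaling once more by $r_\star$. For irrational $\beta$, $\chi_\beta$ is not an algebraic character, and the statement has to be read through the numerical criterion itself.
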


Because the two notions of stability agree with each other, we will simply refer to representations as being ``$\beta$-(semi)stable.''

The following lemma gives an important consequence of $\beta$ stability.
\begin{lemma}\label{lemma: the x are injective}
	If $(\bx,\by)\in\mu_\C^{-1}(\tau)$ is $\beta$-semistable, then each $x_i^{(k)}$ is injective.
\end{lemma}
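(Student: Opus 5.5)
We argue by contradiction: assuming some $x_i^{(k)}$ has a nonzero kernel, we build a nonzero subrepresentation of the doubled quiver $\tilde Q$ supported on the $i$-th branch at nodes $1,\dots,k$. Every such subrepresentation has strictly positive $\tilde\beta$-weight, which contradicts $\beta$-semistability via \Cref{prop: King stability}. The only input beyond the definitions is the complex moment map equation \eqref{eqn: mu C i}.

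Fix $i$ and $k$, and suppose $K:=\ker x_i^{(k)}\subseteq\C^{r_i^{(k)}}$ is nonzero. Set $S^{(k)}:=K$. Working down the branch toward the leaf, define recursively
\[
S^{(j)}:=y_i^{(j)}\big(S^{(j+1)}\big)\subseteq\C^{r_i^{(j)}},\qquad j=k-1,\dots,1.
\]
Put $S=0$ at all other nodes, including $\star$ and the other branches. Closure under the $y$-arrows is automatic. Each $y_i^{(j)}$ with $j<k$ maps $S^{(j+1)}$ onto $S^{(j)}$ by construction. The arrow $y_i^{(k)}$ leaves node $(i,k+1)$, where $S$ is $0$. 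The arrow $x_i^{(k)}$ kills $S^{(k)}=K$.

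The remaining check, and the only step needing care, is $x_i^{(j)}S^{(j)}\subseteq S^{(j+1)}$ for $j<k$. Here we use $\mu_\C=\tau$: since $\tau$ is central, \eqref{eqn: mu C i} at node $(i,j+1)$ gives
\[
x_i^{(j)}y_i^{(j)}=y_i^{(j+1)}x_i^{(j+1)}-\tau_i^{(j+1)}\mathrm{Id}.
\]
Hence
\[
x_i^{(j)}S^{(j)}=x_i^{(j)}y_i^{(j)}S^{(j+1)}\subseteq y_i^{(j+1)}x_i^{(j+1)}S^{(j+1)}+S^{(j+1)}.
\]
We show by downward induction on $j$ that $y_i^{(j+1)}x_i^{(j+1)}S^{(j+1)}\subseteq S^{(j+1)}$.
\begin{itemize}
\item In the base case $j+1=k$, the term vanishes because $x_i^{(k)}K=0$.
\item In the inductive step, $x_i^{(j+1)}S^{(j+1)}\subseteq S^{(j+2)}$ by the previous stage. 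Applying $y_i^{(j+1)}$ lands in $y_i^{(j+1)}S^{(j+2)}=S^{(j+1)}$.
\end{itemize}
Thus $S$ is a $\tilde Q$-subrepresentation of $(\bx,\by)$.

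Now $S$ is nonzero because $S^{(k)}=K\neq0$. It is proper because its $\star$-component is $0$ while $r_\star>0$. Its weight is
\[
\tilde\beta(S)=\sum_{j\le k}\dim S^{(j)}\,\beta_i^{(j)}\ \ge\ \dim K\cdot\beta_i^{(k)}>0,
\]
using $\beta_i^{(j)}>0$ and $s_\star=0$. But $\tilde\beta$-semistability requires $\tilde\beta(S)\le\tilde\beta((\bx,\by))=0$, a contradiction. Therefore each $x_i^{(k)}$ is injective.

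The main obstacle is the $x$-invariance of the recursively defined subspaces. The essential point is that $\tau$ lies in the center, so the moment map relation turns $x_i^{(j)}y_i^{(j)}$ into $y_i^{(j+1)}x_i^{(j+1)}$ plus a scalar. If $\mu_\C$ took a non-central value, this step would fail.
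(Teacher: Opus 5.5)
Your proof is correct and follows the paper's strategy. In both, you take a nonzero subrepresentation supported on the $i$-th branch at nodes $\le k$, with $K$ at node $k$ and $0$ at $\star$. Closure is checked via $x_i^{(j)}y_i^{(j)}=y_i^{(j+1)}x_i^{(j+1)}-\tau_i^{(j+1)}\mathrm{Id}$, and $\tilde\beta(S)>0$ gives the contradiction.

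The only difference is the choice of subspaces below node $k$. The paper takes the largest choice, $S_i^{(j)}=\ker\big(x_i^{(k)}\cdots x_i^{(j)}\big)$: closure under the $x$'s is automatic, and the moment map is needed for closure under the $y$'s. You take the smallest choice, generated by $K$ under the $y$'s: closure under the $y$'s is automatic, and the moment map is needed for closure under the $x$'s. Your downward induction supplies exactly the verification the paper leaves to the reader.
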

\begin{proof}
	Suppose some $x_i^{(k_0)}$ is not injective, say $K=\ker x_i^{(k_0)}$.  We construct a subrepresentation $S$ as follows.  We choose the vector space $S_i^{(k_0)}=K$, and for $k=1,\dots,k_0-1$ we set $S_i^{(k)}=\left(x_i^{(k)}\cdots x_i^{(k_0-1)}\right)^{-1}(K)$.  At all remaining nodes we choose the zero vector spaces (see \Cref{fig: subrepresentation S}).  For the homomorphisms, we just restrict the various $x_i^{(k)}$ and $y_i^{(k)}$.  It is easy to verify that $S$ is a well-defined subrepresentation using the condition $\mu_\C(\bx,\by)=\sigma$ (the proof is analogous to proof of \Cref{claim: subrep well defined} below).  We have $\tilde\beta(S)>0$ since all $\beta_i^{(k)}>0$, so $(\bx,\by)$ is unstable.  This proves the lemma.

	\begin{figure}
	$$
	\begin{tikzcd}[row sep=0.3cm, column sep=1.2cm]
				&&0\arrow[lldd]\arrow[r, bend left=20]	&\cdots\arrow[l]\arrow[r, bend left=20]	&0\arrow[l]\\
				&&\vdots\\
		0\arrow[rruu, bend left=20]\arrow[rr, bend left=20]\arrow[rrdd, bend left=20]		&&\cdots\arrow[ll]\arrow[r, bend left=20]		&0\arrow[l]\arrow[r, bend left=20]	&S_i^{(k_0)}\arrow[l]\arrow[r, bend left=20]	&\cdots\arrow[l]\arrow[r, bend left=20]	&S_i^{(1)}\arrow[l]\\
				&&\vdots\\
				&&0\arrow[lluu]\arrow[r, bend left=20]		&\cdots\arrow[l]\arrow[r, bend left=20]	&0\arrow[l]
	\end{tikzcd}
	$$
	\caption{The subrepresentation $S$}
	\label{fig: subrepresentation S}
	\end{figure}
\end{proof}

\subsection{\texorpdfstring{Holomorphic Symplectic Form on $\mathcal X$}{Holomorphic Symplectic Form on X}}
We begin with a definition:
\begin{definition}\label{def: unitary deformation of quiver rep}
	Let $(\bx,\by)\in\mu_\C^{-1}(\tau)\cap\mu_\R^{-1}(\beta)$.  A tangent vector $(\dot\bx,\dot\by)\in T_{(\bx,\by)}X$ is a \emph{unitary deformation} if it is orthogonal to the image of the linearized action map $\de\rho\from\mathfrak g_\C\to T_{(\bx,\by)}\mu_\C^{-1}(\tau)$.
\end{definition}
Recall from \eqref{eqn: quiver tautological 1 form} the tautological (holomorphic) 1-form $L$ on $X=T^*\Rep_{\vec r}Q$ given by
\begin{equation}\label{eqn: quiver taut 1 form expanded}
	L\big|_{(\bx,\by)}(\dot\bx,\dot\by)=\tr(\dot\bx\by)=\sum_{i,k}\tr(\dot x_i^{(k)}y_i^{(k)}).
\end{equation}
Taking the exterior derivative, we recover the holomorphic 2-form $\Omega=\de L$.  Evaluated on a pair of of deformation $\dot\bx_j=(\dot x_{j,i}^{(k)})_{i,k}$, $\dot\by_j=(\dot y_{j,i}^{(k)})_{i,k}$ for $j=1,2$,
\begin{equation}\label{eqn: quiver holo symp form}
	\Omega\big|_{(\bx,\by)}((\dot\bx_1,\dot\by_1),(\dot\bx_2,\dot\by_2))=\tr(\dot\bx_1\dot\by_2-\dot\bx_2\dot\by_1)=\sum_{i,k}\tr\left(\dot x_{1,i}^{(k)}\dot y_{2,i}^{(k)}-\dot x_{2,i}^{(k)}\dot y_{1,i}^{(k)}\right).
\end{equation}
The latter descends to a holomorphic symplectic form $\Omega_\mathrm{quiv}$ on $\mathcal X=\mathcal X_{(Q,\vec r)}(\tau,\beta)$ given by the same expression, where one uses the identification of $T_{(\bx,\by)}\mathcal X$ with the subspace of unitary deformations of $(\bx,\by)$.\footnote{
	Since the formulas for $L_\mathrm{quiv}$ and $\Omega_\mathrm{quiv}$ are invariant under complex gauge transformations, one can actually relax the gauge orthogonality condition and only assume $(\dot\bx,\dot\by)\in\ker\de\mu_\C\big|_{(\bx,\by)}$.  For our purposes, however, we may as well assume the deformations are unitary.
} of $(\bx,\by)$ when applying the above formula for $\Omega_\mathrm{quiv}$.  In the case $\tau=0$, the tautological 1-form also descends and is calculated in the same way.  See \cite[Section 2.1.2]{FY26} for more details.

\section{Parabolic Higgs Bundle Moduli Space Analytic Construction}\label{sec: Higgs bundle moduli space construction}

To build the moduli space of parabolic $\SL(r,\C)$-Higgs bundles, one fixes the following data:
\begin{itemize}
	\item A divisor $D$ on a complex curve $C$;
	\item A vector bundle $E\to C$;
	\item Filtrations of the sheaves of germs of sections of $E$ near points in $D$, which coincide with filtrations of the fibers $E_p$ for $p\in D$ along with parabolic weights $\alpha_i^{(k)}\in(-\frac12,\frac12)$;
	\item Prescribed Higgs field residue data at each point in $D$, namely an element $\sigma$ of the center of the Lie algebra of endomorphisms of the associated graded rings of $E_p$.
\end{itemize}
For the present work, we specialize to $C=\CP^1$ and take $E$ to be the trivial complex bundle of rank $r$.

\subsection{Moduli Space Construction}
Fix a divisor $D=\{p_1,\dots,p_n\}$ of $n$ distinct points in $\CP^1$.

\begin{definition}[Parabolic Bundle of type $(Q,\vec r)$]\label{def: parabolic bundle}
	Let $Q$ be a star-shaped quiver with $n$ branches and let $\vec r=(r_\star,(r_i^{(k)})_{i,k})$ be a branch-increasing dimension vector as in \Cref{fig: quiver}.  Let $E\to C=\CP^1$ be a trivial rank $r_\star$ vector bundle.  A \emph{parabolic structure of type $Q$} on $E$ is the data of filtrations $F_i^\bullet$ of $E_{p_i}$ along with parabolic weights $\alpha_i^\bullet$:
		\[
		\begin{array}[column sep = -5pt]{ccccccccc}
			E_{p_i}		&=	&F_i^{(\ell_i+1)}       &\supset    &\cdots &\supset    &F_i^{(1)}      &\supset          &0\\[4pt]
			&		&\alpha_i^{(\ell_i+1)}  &<          &\cdots &<          &\alpha_i^{(1)} &&
		\end{array}
		\]
	where $\dim F_i^{(k)}=r_i^{(k)}$ and $\alpha_i^{(k)}\in(-\frac12,\frac12)$ for all $i=1,\dots,n$ and $k=1,\dots,\ell_i+1$.  We use $\mathcal E$ to denote the data $(E, \mathcal F, \alpha)$.  The \emph{multiplicity} of $\alpha_i^{(k)}$ is the dimension $m_i^{(k)}$ of the associated graded piece $(\mathrm{gr}F_i)^{(k)}:=F_i^{(k)}/F_i^{(k-1)}$ (we take $F_i^{(0)}=0$).
\end{definition}

\begin{definition}
    The \textit{parabolic degree} of a parabolic bundle $\mathcal E$ with a holomorphic structure $\delbar_E$ is $\text{pardeg}\,\mathcal E=\deg(E,\delbar_E)+\sum_{i,k}m_i^{(k)}\alpha_i^{(k)}$.  The \textit{slope} of $\mathcal E$ is $\text{slope}\,\mathcal E=\frac{\text{pardeg}\,\mathcal E}{\rk\mathcal E}$.
\end{definition}

\begin{definition}
	Let $\mathcal E=(E,\mathcal F_\mathcal E,\alpha_\mathcal E)$ and $\mathcal H=(H,\mathcal F_\mathcal H,\alpha_\mathcal H)$ be two parabolic bundles of type $(Q_1,\vec r_1)$ and $(Q_2,\vec r_2)$ respectively, each of these being star-shaped quivers with $n$ branches.  A \emph{parabolic homomorphism} $L\in\Hom_\mathrm{par}(\mathcal E,\mathcal H)$ is a map of vector bundles $E\to H$ which sends each $F_{\mathcal E,i}^{(k)}$ into some $F_{\mathcal H,i}^{(j)}$ with $\alpha_{\mathcal E,i}^{(k)}\leq\alpha_{\mathcal H,i}^{(j)}$.  Such a map is a \emph{strongly parabolic homomorphism} if it sends each $F_{\mathcal E,i}^{(k)}$ into some $F_{\mathcal H,i}^{(j)}$ with $\alpha_{\mathcal E,i}^{(k)}<\alpha_{\mathcal H,i}^{(j)}$.
\end{definition}
In particular, a (strongly) parabolic endomorphism of $\mathcal E$ is an endomorphism of $E$ whose restriction to each $E_{p_i}$ is (strictly) upper triangular with respect to the filtration $F_i^\bullet$.  Notation such as $\SL(\mathcal E)$, $\SU(\mathcal E)$, etc.\ should be interpreted as consisting only of endomorphisms which are parabolic at the points in $D$.

\begin{definition}
	An \emph{$\SL(r_\star,\C)$-Higgs field} on a parabolic holomorphic bundle $(\mathcal E,\delbar_E)$ is a global section of $\Omega_C^{(1,0)}\otimes\End_{\mathrm{par},0}(\mathcal E)$, where the subscript $0$ indicates that the endomorphisms should be traceless.
\end{definition}

\begin{definition}
	Let $M_{(Q,\vec r)}$ be the space of all pairs $(\delbar_E,\varphi)$, and consider the map\footnote{This arises as the moment map for the complex gauge group introduced below.}
		\[\mu_{\C,\mathrm{Higgs}}(\delbar_E,\varphi)=\delbar_E\varphi.\]
	For $(\delbar_E,\varphi)\in\mu_{\C,\mathrm{Higgs}}^{-1}(0)$, the residue $\varphi_i=\res_{p_i}\varphi$ induces an endomorphism of the associated graded vector space $\mathrm{gr}F_i^\bullet$, which we shall denote $\mathrm{gr}(\mathrm{res}_{p_i}\varphi)$.
	The parabolic subgroup of $P_i\subset\SL(r_\star,\C)$ decompose into a Levi part $L_i$ and a unipotent part $N_i$, and the Lie algebras decompose $\mathfrak p_i=\mathfrak l_i\oplus\mathfrak n_i$ into Levi and nilpotent subalgebras.  In this terminology, $\res_{p_i} \varphi$ lies in $\mathfrak p_i$, while $\mathrm{gr}$ is the projection to the Levi $\mathfrak l_i$.  We call $\mathrm{gr}(\res_{p_i}\varphi)$ the \emph{Levi part} of $\varphi_i$.  We shall be interested in Higgs fields for which $\mathrm{gr}(\res_{p_i}\varphi)\in Z(\mathfrak l_i)$.  A \emph{central-Levi parabolic Higgs bundle} is one such that $\mathrm{gr}(\res_{p_i}\varphi)\in Z(\mathfrak l_i)$ at all points $p_i \in D$.
\end{definition}

\begin{example}
	Suppose the $i$th branch of $(Q,\vec r)$ is $r_\star=4\leftarrow2\leftarrow1$, and assume the flag is $\C^4\supset\langle e_1,e_2\rangle\supset\langle e_1\rangle$.  Writing
		\[\res_{p_i}\varphi=\begin{pmatrix}a_{11}&*&*&*\\0&b_{11}&*&*\\0&0&c_{11}&c_{12}\\0&0&c_{21}&c_{22}\end{pmatrix}.\]
	the induced map on the associated graded vector space is given by the triple
		\[\mathrm{Lev}(\varphi)_i=\left(\begin{pmatrix}a_{11}\end{pmatrix},\begin{pmatrix}b_{11}\end{pmatrix},\begin{pmatrix}c_{11}&c_{12}\\c_{21}&c_{22}\end{pmatrix}\right)\in\End(\mathrm{gr}F_i^\bullet)\]
	Then $\mathrm{gr}(\res_{p_i}\varphi)\in Z(\mathfrak l_i)$ if and only if $(c_{ij})_{i,j}$ is a scalar matrix.
\end{example}

\begin{remark}
Note that all strongly parabolic Higgs bundles (i.e. $\mathrm{gr}(\res_{p_i}\varphi)=0$) are central-Levi parabolic Higgs bundles. Similarly, note that if $F_i^\bullet$ is a ``full flag'' (i.e.\ each $m_i^{(k)}=1$), then $\mathfrak l_i = Z(\mathfrak l_i)$; hence, in the full flag case, all parabolic Higgs bundles are central-Levi parabolic bundles.
\end{remark}

Because of the natural identification $Z\left(\bigoplus_i\End(\mathfrak l_i)\right)\cong\bigoplus_i\C^{\ell_i}$, this space depends only the dimensions of $F_i^\bullet$ encoded in the dimension vector $\vec r$ of $Q$.''  This will be useful when we allow $\mathcal F$ to vary.

The parabolic structure on $E$ induces one on $\det E$ with weights $\alpha_i^{\det}=\sum_k m_i^{(k)}\alpha_i^{(k)}$.
\begin{definition}\label{def: SL Higgs field}
An \emph{$\SL(r, \C)$-parabolic Higgs bundle} is a rank $r$ parabolic vector bundle $\mathcal E=(E,\vec\alpha,\mathcal F)$, a holomorphic structure $\delbar_E$, and an $\SL(r,\C)$-Higgs field $\varphi$ together with an isomorphism between $\det \mathcal E$ and the trivial bundle with a fixed parabolic structure.
\end{definition}

Note that the reduction of structure group to $\SL(r_\star,\C)$ requires the induced parabolic weights on $\det E$ be integral.  We make the convention to fix the parabolic weights $\alpha_i^{\det}=0$, which imposes the constraint $\sum_k m_i^{(k)}\alpha_i^{(k)}=0$.  A parabolic subbundle $\mathcal S$ of $\mathcal E$ is a parabolic bundle together with a parabolic inclusion $\mathcal S\into\mathcal E$.

\begin{definition}
    A parabolic Higgs bundle $(\mathcal E,\delbar_E,\varphi)$ is $\alpha$-\emph{(semi)stable} if for every proper $\varphi$-invariant parabolic subbundle $\mathcal S\subset\mathcal E$ we have $\textrm{slope}\,\mathcal S<\textrm{slope}\,\mathcal E$ (respectively $\text{slope}\,\mathcal S\leq\text{slope}\,\mathcal E$).
\end{definition}

Every holomorphic subbundle of $\mathcal E$ naturally inherits a parabolic structure with maximal parabolic weights (and thus maximal slope) for which the inclusion map parabolic.\footnote{
	Let $S\subseteq E$ be a holomorphic subbundle.  The induced parabolic structure in the fiber over $p_i$ is
$$\begin{array}{ccccccccc}
		S_{p_i}	&=	&S_{p_i}\cap F_i^{(\ell_i+1)}	&\supset	&\cdots	&\supset	&S_{p_i}\cap F_i^{(1)}		&\supset	&0\\
		&		&\alpha_i^{(\ell_i+1)}	&<			&\cdots	&<			&\alpha_i^{(1)}	&&
\end{array}$$
Whenever there is repetition in the top row, we remove duplicates and inherit the largest corresponding parabolic weight.
}  Hence it suffices to only consider holomorphic subbundles when checking stability of a given parabolic Higgs bundle.

\begin{definition}[Moduli Space of Central-Levi Parabolic Higgs Bundles]
	Fix the data $Q,\vec r,D,E,\mathcal F,\alpha$ as above, and an element $\sigma\in Z\left(\bigoplus_i\End(\mathfrak l_i)\right)\cong\bigoplus_i\C^{\ell_i}$.  Define the map
		\[\mathrm{Lev}\from\mu_{\C,\mathrm{Higgs}}^{-1}(0)\to\bigoplus_i\End(\mathrm{gr}F_i^\bullet),\qquad\mathrm{Lev}(\delbar_E,\varphi)=(\mathrm{gr}(\res_{p_i}(\varphi)))_{i=1}^n\]
	and set $M_{(Q,\vec r,\mathcal F)}(\sigma)=\mathrm{Lev}^{-1}(\sigma)$.  Let $M_{(Q,\vec r,\mathcal F)}(\sigma,\alpha)$ be the $\alpha$-stable locus of $M_{(Q,\vec r,\mathcal F)}(\sigma)$.  The \emph{moduli space of parabolic $\SL(r_\star,\C)$-Higgs bundles of type $(Q,\vec r)$} is
		\[\mathcal M_{(Q,\vec r)}(\sigma,\alpha)=M_{(Q,\vec r)}(\sigma,\alpha)/\mathfrak G_\C\]
	where $\mathfrak G_\C=C^\infty(C,\SL(\mathcal E))$ is the complex gauge group of parabolic endomorphisms of $\mathcal E$ with determinant 1.  We give $\mathcal M_{(Q,\vec r)}(\sigma,\alpha)$ the quotient topology.
\end{definition}

\begin{remark}\label{rem: changing flags}
	Given a different arrangement of flags $\mathcal F'$ of type $(Q,\vec r)$, one can pick a global section $g_{\mathcal F\leftarrow\mathcal F'}\in C^\infty(C,\SL(E))$ which takes $\mathcal F'$ to $\mathcal F$, and use it to create a map of configuration spaces $M_{(Q,\vec r,\mathcal F')}(\sigma,\alpha)\to M_{(Q,\vec r,\mathcal F)}(\sigma,\alpha)$.  Although there are many choices for this $g_{\mathcal F\leftarrow\mathcal F'}$, the induced map between moduli spaces is uniquely determined.  Therefore we can refer to the moduli space $\mathcal M_{(Q,\vec r)}(\sigma,\alpha)$ without ambiguity.  This is an important detail, as the construction of the map from the quiver variety $\mathcal X$ in the next section allows the flags to vary.
\end{remark}

\begin{theorem}[Konno \cite{Kon93}, Biquard-Boalch \cite{BiquardBoalch}, Collier-Fredrickson-Wentworth \cite{CFW24}]
	Given generic parabolic weights, the moduli space constructed above has the structure of a smooth hyperk\"ahler manifold.\footnote{
		Konno addresses the strongly parabolic case.  The weakly parabolic full flag case is a special case of Biquard and Boalch's results for wild Higgs bundles.  Collier-Fredrickson-Wentworth provides a different proof in the weakly parabolic full flag case, and their proof generalizes without modification to our central-Levi parabolic Higgs bundle moduli spaces.
	}
\end{theorem}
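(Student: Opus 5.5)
The plan is to realize $\mathcal M_{(Q,\vec r)}(\sigma,\alpha)$ as an infinite-dimensional hyperk\"ahler quotient, following Konno's approach in the strongly parabolic case and the Biquard--Boalch/Collier--Fredrickson--Wentworth refinement for nonzero residues. First, fix a model singular metric $h_0$ on $E$ adapted to the weights: near $p_i$ it is $|z|^{2\alpha_i^{(k)}}$ on the graded piece $(\mathrm{gr}F_i)^{(k)}$. Fix also a model pair $(\delbar_0,\varphi_0)$ whose Higgs field near $p_i$ is $\sigma_i\,\tfrac{\de z}{z}$ plus a harmonic correction. Then take the configuration space of pairs $(\delbar_E,\varphi)=(\delbar_0+a,\varphi_0+b)$, where $a,b$ lie in weighted Sobolev spaces (weights $|z|^{-1+\delta}$ with small $\delta>0$). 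This is an affine space with the flat hyperk\"ahler structure: the complex structure $I$ is the natural one on $(a,b)$, the symplectic form is $\omega_I = \mathbbm i\int\tr(a^\dagger\wedge a-b\wedge b^\dagger)$, and the holomorphic form is $\Omega_I=\int\tr(\dot a_1\dot b_2-\dot a_2\dot b_1)$. The unitary gauge group $\mathfrak G$ of $h_0$-unitary, parabolic, determinant-one gauge transformations in the matching weighted space acts triholomorphically. Its moment maps are $\mu_\R=F_{h_0}(\delbar_E)+[\varphi,\varphi^{\dagger_{h_0}}]$ and $\mu_\C=\delbar_E\varphi$. The central-Levi residue condition $\mathrm{Lev}=\sigma$ is built into the choice of $\varphi_0$, since the centrality of $\sigma_i$ is exactly what makes the model residue $\mathfrak l_i$-equivariant and preserved by the gauge action.

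Second, identify the hyperk\"ahler quotient $(\mu_\R^{-1}(0)\cap\mu_\C^{-1}(0))/\mathfrak G$ with $M_{(Q,\vec r)}(\sigma,\alpha)/\mathfrak G_\C$. This step uses Simpson's nonabelian Hodge/Kobayashi--Hitchin correspondence for tame parabolic Higgs bundles: an $\alpha$-stable central-Levi Higgs bundle admits a unique harmonic metric, mutually bounded with $h_0$. Mutual boundedness requires that the eigenvalues of the residue on each graded piece be constant, and this is where the central-Levi hypothesis enters. Conversely, solutions are polystable. For generic $\alpha$, stable equals semistable, so every solution is stable. Stable solutions are simple, and hence have stabilizer equal to the finite center $\mu_{r_\star}$ of $\SL$.

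Third, prove smoothness and the hyperk\"ahler property via the deformation complex
\[\mathrm{Lie}(\mathfrak G_\C)\xrightarrow{\ d_1\ }T\xrightarrow{\ d_2=\de\mu_\C\ }\Omega^{1,1}\otimes\End_{\mathrm{par},0}.\]
Show that it is Fredholm on the weighted spaces. The cohomology groups $H^0$ and $H^2$ vanish: $H^0$ by simplicity, and $H^2$ by Serre duality with $H^0$ of the dual complex, which again uses stability and genericity. A Kuranishi/slice argument then gives a smooth manifold whose tangent space is $H^1$, the harmonic representatives being the ``unitary deformations.'' The quotient $L^2$ metric and the three K\"ahler forms descend by the standard Hitchin--Karlhede--Lindstr\"om--Ro\v{c}ek argument, because $H^1$ is the orthogonal complement of the $\mathfrak g_\C$-orbit inside $\ker\de\mu_\R\cap\ker\de\mu_\C$. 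Closedness and integrability are inherited from the flat ambient space.

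The main obstacle is the Fredholm and elliptic analysis near the punctures, which is needed both for the slice theorem and for the correspondence with weighted spaces. I would handle it as follows. Compute the indicial roots of the linearized operator at the model $\sigma_i\,\tfrac{\de z}{z}$ in each block of the Levi decomposition. Then verify that, for generic $\alpha$ and small $\delta$, no indicial root lies on the weight line. For entries connecting graded pieces $k\neq k'$, the roots are shifted by $\alpha_i^{(k)}-\alpha_i^{(k')}$. On the diagonal Levi blocks, the centrality of $\sigma_i$ makes the operator reduce to the same scalar model as in the strongly parabolic case, which avoids any genuinely non-semisimple residue behaviour. With the roots controlled, the Biquard--Boalch b-calculus estimates apply verbatim to the central-Levi setting, as noted by Collier--Fredrickson--Wentworth.
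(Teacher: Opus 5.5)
Your outline is correct and is essentially the route the paper relies on. The paper gives no proof of its own: it cites Konno's weighted-space hyperk\"ahler quotient and the Biquard--Boalch and Collier--Fredrickson--Wentworth analyses, and remarks that the latter carries over without modification to the central-Levi setting. That gauge-theoretic construction is what you sketch.

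You also correctly locate where centrality of $\sigma_i$ is used: gauge invariance of $\mathrm{Lev}=\sigma$, self-duality of the deformation complex (since $[\mathrm{gr}\,\gamma,\sigma_i]=0$), and the strongly parabolic model on the diagonal Levi blocks. One small correction: mutual boundedness of the harmonic metric with $h_0$ requires the graded residue to be semisimple on each graded piece, not merely to have a single eigenvalue, since a nilpotent part there produces $\log|z|$ growth. Centrality guarantees this semisimplicity.
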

The hyperk\"ahler structure provides a distinguished holomorphic symplectic form $\Omega_\mathrm{Higgs}$ which we describe in the next subsection.

Since the underlying Riemann surface is $\CP^1$, the Birkhoff--Grothendieck decomposition gives a stratification of the parabolic Hitchin moduli space $\mathcal M=\mathcal M_{(Q,\vec r)}(\sigma,\alpha)$ according to the underlying holomorphic structures of its elements.  

\begin{definition}
	Let $\mathcal M_0\subseteq\mathcal M$ be the subset of consisting of Higgs bundles whose holomorphic structure is isomorphic to $\mathcal O^{\oplus r_\star}$.
\end{definition}

This is a Zariski open subset, as the subspace of the configuration space $M_{(Q,\vec r)}$ consisting of Higgs bundles with trivial holomorphic structures is open.\footnote{
	This classical result follows from the semicontinuity of the function $\delbar_E\mapsto\dim H^0(C,(E,\delbar_E))$.
}

\subsection{Holomorphic Symplectic Structure}\label{subsec: Omega construction on Hitchin moduli space}
Next, we describe the holomorphic symplectic form $\Omega_\mathrm{Higgs}$ on $\mathcal M=\mathcal M_{(Q,\vec r)}(\sigma,\alpha)$.

Let $(\,\delbar_A,\Phi)\in\mathcal M$ and let $(\,\delbar_{A_j(t)},\Phi_j(t))$ be 1-parameter families of Higgs bundles with $(\,\delbar_{A_j(0)},\Phi_j(0))=(\,\delbar_A,\Phi)$ for $j=1,2$, which we identify with their lifts in the configuration space $M_{(Q,\vec r,\mathcal F)}(\sigma,\alpha)$.  Let $(\dot A_j,\dot\Phi_j)$ be the first variation.  Then the distinguished holomorphic symplectic form $\Omega_\mathrm{Higgs}$ is given by (see \cite{CFW24})
\begin{equation}\label{eqn: Omega Higgs definition}
	\Omega_\mathrm{Higgs}((\dot A_1,\dot\Phi_1),(\dot A_2,\dot\Phi_2))=-\mathbbm i\int_C\tr\left(\dot A_1\wedge\dot\Phi_2-\dot A_2\wedge\dot\Phi_1\right).
\end{equation}

In the strongly parabolic case $\sigma=0$, this $\Omega_\mathrm{Higgs}$ is the exterior derivative of the tautological 1-form given by
\begin{equation}\label{eqn: L Higgs definition}
	L_\mathrm{Higgs}(\dot A,\dot\Phi)=-\mathbbm i\int_C\tr(\dot A\wedge\Phi)
\end{equation}
where $\Phi=\Phi(0)$.

\begin{remark}\label{rem: harmonic representative not necessary}
	In \cite{CFW24}, $(\dot A_j,\dot\Phi_j)$ are \emph{harmonic representatives} of the deformation, meaning they are orthogonal to the $\mathfrak G_\C$-orbits in addition to infinitesimally preserving $\mu_\C=0$ and $\mathrm{Lev}=\sigma$.  However, the gauge orthogonality is not required.  We prove this invariance in \Cref{subsec: Omega independence}.
\end{remark}
These formulas require a given family $(\,\delbar_{A(t)},\Phi(t))\in M_{(Q,\vec r,\mathcal F)}(\sigma,\alpha)$ to have fixed parabolic flags.  In particular, the Higgs field deformations $\dot\Phi$ must be strongly parabolic (Levi part 0).
In \Cref{sec: holomorphic symplectic forms}, we use families of quiver representations to produce families of Higgs bundles $(\,\delbar_E,\mathcal F(t),\varphi(t))$ in which $\delbar_E$ is fixed but the parabolic flag data varies.  In order to then evaluate $\Omega_\mathrm{Higgs}$, we will have to consider families of gauge changes $g_{\mathcal F\leftarrow\mathcal F(t)}$ which takes $(\,\delbar_E,\mathcal F(t),\varphi(t))$ to $(\,\delbar_{A(t)},\mathcal F,\Phi(t))$ with $\mathcal F=\mathcal F(0)$ fixed as in \Cref{rem: changing flags}.  Since $\Omega_\mathrm{Higgs}$ only depends on the first variation of $(\,\delbar_{A(t)},\Phi(t))$, it will suffice to take $g_{\mathcal F\leftarrow\mathcal F(t)}=\exp(t\dot\nu)=\Id+t\dot\nu+O(t^2)$ for an appropriate $\dot\nu$ which we now characterize.

\begin{definition}[First Variation of Flag Data]\label{def: flag changing endomorphism}
	Let $\mathcal F(t)$ be a smooth family of flag data on $(E,\delbar_E)$, and let $\mathcal E(t)=(E,\mathcal F(t),\alpha)$.  We say $\dot\nu\in C^\infty(C,\End(E))$ \emph{induces the first variation} of $\mathcal F(t)$ if for any family $(\,\delbar_E,\varphi(t))\in M_{(Q,\vec r,\mathcal F(t))}(\sigma)$, we have
		\[(e^{-t\dot\nu}\circ\delbar_E\circ e^{t\dot\nu},e^{-t\dot\nu}\varphi(t)e^{t\dot\nu})\in M_{(Q,\vec r,\mathcal F(0))}(\sigma)\]
	up to an $O(t^2)$ correction.
\end{definition}

\begin{lemma}\label{lemma: infinitesimal flag change}
	An endomorphism $\dot\nu$ induces the first variation of the flag data $\mathcal F(t)$ if and only if for any family of holomorphic $\varphi(t)\in\End_{\mathrm{par},0}(\mathcal E(t))$ with $\mathrm{Lev}(\delbar_E,\varphi)=\sigma$ we have $\dot\varphi+[\varphi,\dot\nu]\in\End_{\mathrm{spar},0}(\mathcal E)$. 
\end{lemma}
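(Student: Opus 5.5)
Expand the conjugated family to first order in $t$ and read off exactly what membership in $M_{(Q,\vec r,\mathcal F(0))}(\sigma)$ requires of the $O(t)$ term. Write $\varphi(t)=\varphi+t\dot\varphi+O(t^2)$. Then
\[
e^{-t\dot\nu}\varphi(t)e^{t\dot\nu}=\varphi+t\big(\dot\varphi+[\varphi,\dot\nu]\big)+O(t^2),
\qquad
e^{-t\dot\nu}\circ\delbar_E\circ e^{t\dot\nu}=\delbar_E+t\,\delbar_E\dot\nu+O(t^2).
\]
Membership in $M_{(Q,\vec r,\mathcal F(0))}(\sigma)$ to first order means two things. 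First, $\mu_{\C,\mathrm{Higgs}}=0$ must hold to first order. Second, the Higgs field must be parabolic with respect to $\mathcal F(0)$ with Levi part $\sigma$ at each $p_i$, again to first order.

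The holomorphicity condition is automatic, since it is preserved by any gauge transformation. Conjugating $(\delbar_E,\varphi(t))$ by $e^{t\dot\nu}$ keeps $\bar\partial\varphi=0$ exactly. So the only content lies at the punctures.

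At $t=0$ the Higgs field $\varphi$ already lies in $\End_{\mathrm{par},0}(\mathcal E)$ with $\mathrm{Lev}=\sigma$. Consider any first-order perturbation $\varphi+t\psi$ of $\varphi$. It remains $\mathcal F(0)$-parabolic with the same Levi part $\sigma$, modulo $O(t^2)$, if and only if $\res_{p_i}\psi$ preserves the flag $F_i^\bullet(0)$ and induces $0$ on $\mathrm{gr}F_i^\bullet$. The condition on $\mathrm{gr}$ holds because $\sigma$ is fixed, so the Levi part cannot move. In other words, the condition is exactly $\psi\in\End_{\mathrm{spar},0}(\mathcal E)$, where $\mathcal E=\mathcal E(0)$.

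Tracelessness is automatic, because $\tr\dot\varphi=0$ and the trace of a commutator vanishes. Applying this with $\psi=\dot\varphi+[\varphi,\dot\nu]$ proves both directions at once, since the definition quantifies over the same class of families $\varphi(t)$ as the lemma.

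The one point requiring care is the claim that the linearization of "parabolic with Levi part $\sigma$" is exactly "strongly parabolic". This holds because the set of elements of $\mathfrak p_i$ with Levi part $\sigma_i$ is the affine space $\sigma_i+\mathfrak n_i$, whose tangent space is $\mathfrak n_i$.

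This affine-space description needs a lift of $\sigma_i$ to $\mathfrak l_i$. It is canonical only after choosing a splitting, but its image in $\mathfrak p_i/\mathfrak n_i$ is well defined, which is all that is needed. I expect the main subtlety to be bookkeeping: being precise that "up to $O(t^2)$" means the first-order jet lies in the tangent space of this affine condition. I will also remark that $\res_{p_i}$ commutes with conjugation by the smooth endomorphism $e^{t\dot\nu}$.
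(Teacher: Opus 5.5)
Your proposal is correct and follows the same route as the paper: expand $e^{-t\dot\nu}\varphi(t)e^{t\dot\nu}=\varphi+t(\dot\varphi+[\varphi,\dot\nu])+O(t^2)$ and observe that the condition $\mathrm{gr}(\res_{p_i}\cdot)=\sigma_i$ is preserved to first order exactly when $\dot\varphi+[\varphi,\dot\nu]$ has strongly parabolic residues. Your additional remarks make explicit what the paper's short argument leaves implicit: conjugation preserves holomorphicity, tracelessness is automatic, $\res_{p_i}$ commutes with conjugation by $e^{t\dot\nu}$, and the affine set $\sigma_i+\mathfrak n_i$ has tangent space $\mathfrak n_i$.
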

\begin{proof}
	Suppose $(\,\delbar_E,\varphi(t))$ is a smooth family in $M_{(Q,\vec r,\mathcal F(t))}(\sigma)$ with $\varphi(t)=\varphi+t\dot\varphi+O(t^2)$.  We have
		\[e^{-t\dot\nu}\varphi(t)e^{t\dot\nu}=\varphi+t\left(\dot\varphi+[\varphi,\dot\nu]\right)+O(t^2).\]
	Since $\varphi$ satisfies the residue conditions $\mathrm{gr}(\res_{p_i}\varphi)=\sigma_i$, the same is true of $e^{-t\dot\nu}\varphi(t)e^{t\dot\nu}$ (up to first order in $t$) if and only if $\dot\varphi+[\varphi,\dot\nu]$ is strongly parabolic.
\end{proof}

In the proof of our main theorem, we will only be concerned about describing $\Omega_\mathrm{Higgs}$ on the locus $\mathcal M_0$ of Higgs bundles with holomorphically trivial underlying structure.  On this open subspace, we will consider 1-parameter families of Higgs bundles in which $\delbar_E$ is held fixed, and translate them to families where $\mathcal F$ is held fixed instead.  This motivates the following definition.

\begin{definition}
	Let $(\,\delbar_E,\mathcal F(t),\varphi(t))$ represent a smooth family in $\mathcal M_{(Q,\vec r)}(\sigma,\alpha)$.  Say $\varphi(t)=\varphi+t\dot\varphi+O(t^2)$, and suppose $\dot\nu$ induces the first variation of $\mathcal F(t)$.  We call $(\dot\nu,\dot\varphi)$ a \emph{holomorphic deformation} of $(\,\delbar_E,\mathcal F,\varphi)=(\,\delbar_E,\mathcal F(0),\varphi(0))$.  Conjugation by $e^{-t\dot\nu}$ takes $(\,\delbar_E,\mathcal F(t),\varphi(t))$ to some $(\,\delbar_{A(t)},\Phi(t))\in M_{(Q,\vec r,\mathcal F(0))}(\sigma,\alpha)$, whose first variation $(\dot A,\dot\Phi)$ we call a \emph{fixed-flag deformation} of $(\,\delbar_E,\mathcal F,\varphi)$.  These two deformations are \emph{equivalent} in the sense that they represent the same family in $\mathcal M_{(Q,\vec r)}(\sigma,\alpha)$.\footnote{
		In previous literature (e.g.\ \cite{FMSW21,FMSW26,FY26}), the notation $(\dot\eta,\dot\varphi,\dot\nu)$ represents a deformation of a Hermitian-Yang-Mills triple $(\delbar_E,\varphi,h)$.  This $\dot\nu$ can be thought of as a correction term used to find the harmonic representative $(\dot A,\dot\Phi)$ of the deformation discussed in \Cref{rem: harmonic representative not necessary}.  Our flag correction was inspired by this, but distinct since we never fix a hermitian metric and do not require $\dot\nu$ to solve a PDE.
	}
\end{definition}

Note that the choice of $\dot\nu$ is not at all unique.  When we write $(\dot\nu,\dot\varphi)$, we simply use $\dot\nu$ to keep track of the first variation of $\mathcal F$ which is encoded in $\dot\nu(p_i)$ for $i=1,\dots,n$.  The alternative is to consider $\dot{\mathcal F}$ as a tangent vector in a product of flag varieties and to write $(\dot{\mathcal F},\dot\varphi)$ for holomorphic deformations.

\begin{proposition}\label{prop: Omega on holo deformation}
	Let $(\dot\nu_1,\dot\varphi_1),(\dot\nu_2,\dot\varphi_2)$ be holomorphic deformations of $(\,\delbar_E,\mathcal F,\varphi)$, and let $(\dot A_1,\dot\Phi_1),(\dot A_2,\dot\Phi_2)$ be the equivalent fixed-flag deformations.  Then
		\[\Omega_\mathrm{Higgs}((\dot\nu_1,\dot\varphi_1),(\dot\nu_2,\dot\varphi_2))=-\mathbbm i\int_C\tr\left(\,\delbar_E\dot\nu_1\wedge(\dot\varphi_2+[\varphi,\dot\nu_2])-\delbar_E\dot\nu_2\wedge(\dot\varphi_1+[\varphi,\dot\nu_1])\right).\]
	For a single deformation $(\dot\nu,\dot\varphi)$, the tautological 1-form is
		\[L_\mathrm{Higgs}(\dot\nu,\dot\varphi)=-\mathbbm i\int_C\tr(\,\delbar_E\dot\nu\wedge\varphi).\]
\end{proposition}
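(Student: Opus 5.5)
The plan is to compute the first variation $(\dot A_j,\dot\Phi_j)$ of the conjugated family and substitute it into the defining formula \eqref{eqn: Omega Higgs definition} (and \eqref{eqn: L Higgs definition} for the tautological form). By definition, the fixed-flag family is $(\,\delbar_{A(t)},\Phi(t))=(e^{-t\dot\nu}\circ\delbar_E\circ e^{t\dot\nu},\,e^{-t\dot\nu}\varphi(t)e^{t\dot\nu})$. Expanding to first order in $t$ gives
\[
e^{-t\dot\nu}\circ\delbar_E\circ e^{t\dot\nu}=\delbar_E+t\,\delbar_E\dot\nu+O(t^2),
\qquad
e^{-t\dot\nu}\varphi(t)e^{t\dot\nu}=\varphi+t(\dot\varphi+[\varphi,\dot\nu])+O(t^2).
\]
Hence $\dot A_j=\delbar_E\dot\nu_j$ and $\dot\Phi_j=\dot\varphi_j+[\varphi,\dot\nu_j]$. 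By \Cref{lemma: infinitesimal flag change}, $\dot\Phi_j$ is strongly parabolic with respect to the fixed flags $\mathcal F=\mathcal F(0)$. So this is a legitimate fixed-flag first variation in the configuration space $M_{(Q,\vec r,\mathcal F)}(\sigma)$, and it satisfies the hypotheses under which \eqref{eqn: Omega Higgs definition} applies.

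Substituting into \eqref{eqn: Omega Higgs definition} gives the first claimed formula immediately. For the tautological form, substituting $\dot A=\delbar_E\dot\nu$ and $\Phi=\varphi$ (the base point, $t=0$) into \eqref{eqn: L Higgs definition} gives the second.

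One point needs care. $\dot A_j$ is not a harmonic representative, since it lies in the complex gauge orbit direction. Here I would invoke \Cref{rem: harmonic representative not necessary}, namely the gauge-independence proven in \Cref{subsec: Omega independence}: adding an infinitesimal complex gauge change $(\delbar_A\xi,[\Phi,\xi])$ with $\xi$ parabolic does not change the value. The quick verification runs as follows. Pairing $(\delbar_A\xi,[\Phi,\xi])$ against a deformation $(\dot A,\dot\Phi)$ gives
\[
\int\tr(\delbar_A\xi\wedge\dot\Phi-\dot A\wedge[\Phi,\xi]).
\]
Integrating by parts turns this into $-\int\tr\bigl(\xi(\delbar_A\dot\Phi+[\dot A,\Phi])\bigr)$, which vanishes by the linearized $\mu_\C=0$ condition. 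There is also a boundary residue term at the punctures, of the form $\tr(\xi(p_i)\res_{p_i}\dot\Phi)$. This term vanishes because $\xi(p_i)$ is parabolic and $\res\dot\Phi$ is strongly parabolic, so their product is strictly upper triangular.

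The main obstacle is exactly this last residue/boundary bookkeeping. The gauge element $e^{-t\dot\nu}$ is not parabolic with respect to $\mathcal F(0)$; only its effect is to move the flags. So I must also make sure that $\dot\nu$ itself is not being treated as a gauge direction in $\mathfrak G_\C$. The formula should be read simply as evaluation of \eqref{eqn: Omega Higgs definition} on the specific representative $(\delbar_E\dot\nu_j,\dot\Phi_j)$, which is a valid tangent vector to $M_{(Q,\vec r,\mathcal F)}(\sigma)$. Independence of the choice of $\dot\nu$ inducing a given $\dot{\mathcal F}$ follows from the same argument: two choices differ by an endomorphism $\xi$ whose value at each $p_i$ preserves the flag, i.e.\ a parabolic gauge direction, which pairs to zero as shown above.
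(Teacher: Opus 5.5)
Your proposal is correct and follows the paper's proof. You compute the first variation of the conjugated family, $\dot A_j=\delbar_E\dot\nu_j$ and $\dot\Phi_j=\dot\varphi_j+[\varphi,\dot\nu_j]$, and substitute into \eqref{eqn: Omega Higgs definition} and \eqref{eqn: L Higgs definition}; your gauge-independence discussion is what the paper defers to \Cref{rem: harmonic representative not necessary} and its appendix. One caveat: your bilinearity shortcut, which pairs a gauge direction against an arbitrary admissible deformation, tacitly uses the central-Levi condition to know that $(\delbar_A\xi,[\Phi,\xi])$ is itself admissible, i.e.\ that $[\res_{p_i}\Phi,\xi(p_i)]$ is strongly parabolic; this is exactly where the paper's appendix invokes $C_i^{\mathrm{Levi}}\in Z(\mathfrak l_i)$.
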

\begin{proof}
	This follows from the formulas \eqref{eqn: Omega Higgs definition} and \eqref{eqn: L Higgs definition} for $\Omega_\mathrm{Higgs}$ and $L_\mathrm{Higgs}$, and the fact that
		\[\dot A=\frac{\de}{\de t}\Big|_{t=0}e^{-t\dot\nu}\circ\delbar_E\circ e^{t\dot\nu}=\delbar_E\dot\nu,\]
		\[\dot\Phi=\frac{\de}{\de t}\Big|_{t=0}e^{-t\dot\nu}\varphi(t)e^{t\dot\nu}=\dot\varphi+[\varphi,\dot\nu]\]
	for a given holomorphic deformation $(\dot\nu,\dot\varphi)$.
\end{proof}

\section{Map from Nakajima Quiver Varieties to Higgs Bundle Moduli Spaces}\label{sec: the map}
In this section, we construct a map $\mathcal T$ from a suitable star-shaped Nakajima quiver varieties to a corresponding moduli space of parabolic Higgs bundles.  We begin  with a construction of Higgs bundles from quiver representation data.  We use the complex GIT parameters $\tau_i^{(k)}$ to determine the components $\sigma_i^{(k)}$ of $\sigma\in Z\left(\bigoplus_i\mathfrak l_i\right)$ and prove compatibility for our Higgs field.  We similarly use the real GIT parameters $\beta_i^{(k)}$ for the quiver variety to determine parabolic weights $\alpha_i^{(k)}$.  We prove $\mathcal T$ is well-defined by showing that $\beta$-stable quiver representations map to $\alpha$-stable Higgs bundles, and then show that $\mathcal T$ is a homeomorphism onto the subspace $\mathcal M_0\subseteq\mathcal M$ of Higgs bundles with underlying holomorphic structure $(E,\delbar_E)=\mathcal O^{\oplus r_\star}$.  This section takes inspiration from \cite{RS21}, which outlines a similar construction for the case $\tau,\sigma=0$ with the additional restriction that $\beta_i^{(k)}=0$ for $k=2,\dots,\ell_i$.  We make some adaptations to account for some delicate stability issues.\footnote{They do not prove the map sends stable quiver representations to stable Higgs bundles.  Indeed, additional hypotheses on $\beta$, $\alpha$ are needed to guarantee this.}  We also do not consider the more general ``comet-shaped quivers'' which Rayan and Schaposnik discuss, as we have not been able to verify that the map to parabolic Hitchin moduli spaces on higher genus surfaces is well-defined.

\subsection{\texorpdfstring{Construction of $\mathcal T$}{Construction of T}}

Recall that each $\tau_i^{(k)}$ belongs to the center $Z(\mathfrak{gl}(r_i^{(k)},\C))\cong\C$, and similarly $\sigma_i$ has components $\sigma_i^{(k)}\in Z(\End(F_i^{(k)}/F_i^{(k-1)}))\cong\C$.  We identify these scalar matrices with complex numbers, so that expressions like $\tau_i^{(k)}=\sigma_i^{(k)}-\sigma_i^{(k+1)}$ and $\sum_{k=k_0}^{\ell_i}\tau_i^{(k)}$ make sense.

\begin{definition}[Construction of $\mathcal T$]\label{def: T construction}
	Fix a star-shaped quiver and branch-increasing dimension vector $(Q,\vec r)$ as in \Cref{fig: quiver}, and central elements $\tau,\beta$ as in \Cref{subsec: quiver variety construction}.  Recall the quotient construction $\mathcal X=\mathcal X_{(Q,\vec r)}(\tau,\beta)=\mu_\C^{-1}(\tau)^{\beta-\mathrm{st}}/G_\C$.  Given a representation $(\bx,\by)\in\mu_\C^{-1}(\tau)$, we construct a parabolic Higgs bundle $\mathcal T(\bx,\by)$ as follows.  As before, we take $r_i^{(\ell_i+1)}=r_\star$ and $r_i^{(0)}=0$ to simplify our formulas.  Similarly, we take $x_i^{(\ell_i+1)}=0$, $x_i^{(0)}=0$, etc.  Define
	\begin{equation}\label{eqn: def of X}
		X_i^{(k)}=x_i^{(\ell_i)}\!\circ\cdots\circ x_i^{(k)}.
	\end{equation}
	We give the trivial holomorphic vector bundle $(E,\delbar)=\mathcal O^r\to\P^1$ a parabolic structure as follows.  At each point $p_i\in D$ we define the filtrations
	\begin{equation}\label{eqn: parabolic flags and weights}
		\begin{array}{ccccccccc}
		E_{p_i}	&=	&F_i^{(\ell_i+1)}	&\supset	&\cdots	&\supset	&F_i^{(1)}		&\supset	&0\\
		-\frac12	&<			&\alpha_i^{(\ell_i+1)}	&<			&\cdots	&<			&\alpha_i^{(1)}	&<	&\frac12
		\end{array}
	\end{equation}
	where $F_i^{(k)}=\im(X_i^{(k)})$ and the weights $\alpha_i^{(k)}$ are uniquely determined\footnote{
		Namely, $\alpha_i^{(\ell_i+1)}=-\frac1{r_\star}\sum_{k=1}^{\ell_i}r_i^{(k)}\beta_i^{(k)}$.  The proof is identical to that of \Cref{prop: map construction properties} (ii) below.}
	by
		\[\alpha_i^{(k)}-\alpha_i^{(k+1)}=\beta_i^{(k)},\qquad\qquad \sum_{k=1}^{\ell_i+1}m_i^{(k)}\alpha_i^{(k)}=0,\]
	where $m_i^{(k)}=r_i^{(k)}-r_i^{(k-1)}$ is the multiplicity of $\alpha_i^{(k)}$.  The second equation is required for the determinant condition from \Cref{def: SL Higgs field}.  Similarly, we use $\tau$ to define a tuple of complex masses $\sigma$ by the equations
	\begin{equation}\label{eqn: tau sigma relation}
		\tau_i^{(k)}=\sigma_i^{(k)}-\sigma_i^{(k+1)},\qquad k=1,\dots,\ell_i
	\end{equation}
	and
	\begin{equation}\label{eqn: sigma sum rule}
		\sum_{k=1}^{\ell_i+1}m_i^{(k)}\sigma_i^{(k)}=0.
	\end{equation}
	Finally, to construct the Higgs field we prescribe residues $\varphi_i=(x_i^{(\ell_i)}y_i^{(\ell_i)})_0$ and set
		$$\varphi=\sum_i\frac{\varphi_i\,\de z}{z-p_i}.$$
\end{definition}

\begin{example}\label{ex: 4 - 2 - 1 example}
	Suppose the $i$th branch of $(Q,\vec r)$ is $r_\star=4\leftarrow2\leftarrow1$.  Let $(\bx,\by)\in\mathcal X_{(Q,\vec r)}(\tau,\beta)$.  Up to $G_\C$-equivalence, we may assume
	\begin{equation}
		x_i^{(1)}=\begin{pmatrix}1\\0\end{pmatrix},
			\qquad x_i^{(2)}=\begin{pmatrix}1&0\\0&1\\0&0\\0&0\end{pmatrix}.
	\end{equation}
	Then $\mu_{\C,i}^{(k)}(\bx,\by)=\tau_i^{(k)}$ for $k=1,2$ requires
	\begin{equation}
		y_i^{(1)}=\begin{pmatrix}\tau_i^{(1)}&a\end{pmatrix},
			\qquad y_i^{(2)}=\begin{pmatrix}\tau_i^{(1)}+\tau_i^{(2)}&a&*&*\\0&\tau_i^{(2)}&*&*\end{pmatrix}
	\end{equation}
	where $a\in\C$ and the asterisks are free.  In $\mathcal T(\bx,\by)$, the parabolic flag will be $E_{p_i}\supset\langle e_1,e_2\rangle\supset\langle e_1\rangle$, and the Higgs field residue is
	\begin{equation}
		\varphi_i=\left(x_i^{(2)}y_i^{(2)}\right)_0
			=\begin{pmatrix}\tau_i^{(1)}+\tau_i^{(2)}&a&*&*\\0&\tau_i^{(2)}&*&*\\0&0&0&0\\0&0&0&0\end{pmatrix}
			-\frac14\left(\tau_i^{(1)}+2\tau_i^{(2)}\right)\Id.
	\end{equation}
	The induced map on the associated graded space $\mathrm{gr}F_i^\bullet=\C\oplus\C\oplus\C^2$ is
		\[\mathrm{gr}(\varphi_i)=(\sigma_i^{(1)},\sigma_i^{(2)},\sigma_i^{(3)})=\frac14\left(3\tau_i^{(1)}+2\tau_i^{(2)},-\tau_i^{(1)}+2\tau_i^{(2)},-\tau_i^{(1)}-2\tau_i^{(2)}\right).\]
	Now we see $\sigma_i^{(k)}-\sigma_i^{(k+1)}=\tau_i^{(k)}$ for $k=1,2$, and $\sum_k m_i^{(k)}\sigma_i^{(k)}=\sigma_i^{(1)}+\sigma_i^{(2)}+2\sigma_i^{(3)}=0$.
\end{example}

Before proceeding, prove a useful formula which we encourage the reader to read carefully:
\begin{lemma}[Commutation Rule]\label{lemma: xy commutation rule}
	Let $(\bx,\by)\in\mu_\C^{-1}(\tau)$.  Then for any $k_0=1,\dots,\ell_i$,
		\[y_i^{(\ell_i)}X_i^{(k_0)}=y_i^{(\ell_i)}x_i^{(\ell_i)}\!\cdots x_i^{(k_0)} = x_i^{(\ell_i-1)}\!\cdots x_i^{(k_0-1)}y_i^{(k_0-1)} + \left(\sum_{k=k_0}^{\ell_i}\tau_i^{(k)}\right)x_i^{(\ell_i-1)}\!\cdots x_i^{(k_0)}.\]
	Diagramatically, we represent this relation by
        \[
        \begin{tikzpicture}[baseline=15pt, >=stealth, thick, scale=0.5]
            \draw[gray!60, thin, dashed] (-5.5, -1) -- (0.5, -1);
            \node[left, gray!80, font=\tiny] at (-5.5, -1) {$\C^{r_i^{(k_0-1)}}$};
            
            \draw[gray!60, thin, dashed] (-5.5, 0) -- (0.5, 0);
            \node[left, gray!80, font=\tiny] at (-5.5, 0) {$\C^{r_i^{(k_0)}}$};
            
            \draw[gray!60, thin, dashed] (-5.5, 1) -- (0.5, 1);
            \node[left, gray!80, font=\tiny] at (-5.5, 1) {$\C^{r_i^{(k_0+1)}}$};
            
            \draw[gray!60, thin, dashed] (-5.5, 2.5) -- (0.5, 2.5);
            \node[left, gray!80, font=\tiny] at (-5.5, 2.5) {$\C^{r_i^{(\ell_i)}}$};
            
            \draw[gray!60, thin, dashed] (-5.5, 3.5) -- (0.5, 3.5);
            \node[left, gray!80, font=\tiny] at (-5.5, 3.5) {$\C^{r_\star}$};

            \coordinate (n0) at (-1,-1);
            \coordinate (n1) at (0,0);
            \coordinate (n2) at (-1,1);
            \coordinate (n3) at (-2.5,2.5);
            \coordinate (n4) at (-3.5,3.5);
            \coordinate (n5) at (-4.5,2.5);
            
            \fill (n0) circle (0pt);
            \fill (n1) circle (2pt);
            \fill (n2) circle (2pt);
            \fill (n3) circle (2pt);
            \fill (n4) circle (2pt);
            \fill (n5) circle (2pt);

            \draw[->] (n1) -- (n2) node[midway, above right, font=\scriptsize] {$x_i^{(k_0)}$};
            \draw[->, dotted] (n2) -- (n3);
            \draw[->] (n3) -- (n4) node[midway, above right, xshift = 0pt, yshift = -2pt, font=\scriptsize] {$x_i^{(\ell_i)}$};
            \draw[->] (n4) -- (n5) node[midway, above left, xshift = 4pt, yshift = -2pt, font=\scriptsize] {$y_i^{(\ell_i)}$};
        \end{tikzpicture}
        =
        \begin{tikzpicture}[baseline=15pt, >=stealth, thick, scale=0.5]
            \draw[gray!60, thin, dashed] (-5, -1) -- (0.5, -1);
            \draw[gray!60, thin, dashed] (-5, 0) -- (0.5, 0);
            \draw[gray!60, thin, dashed] (-5, 1) -- (0.5, 1);
            \draw[gray!60, thin, dashed] (-5, 2.5) -- (0.5, 2.5);
            \draw[gray!60, thin, dashed] (-5, 3.5) -- (0.5, 3.5);

            \coordinate (n0) at (0,0);
            \coordinate (n1) at (-1,-1);
            \coordinate (n2) at (-2,0);
            \coordinate (n3) at (-3.5,1.5);
            \coordinate (n4) at (-4.5,2.5);
            \coordinate (n5) at (-3.5,3.5);

            \fill (n0) circle (2pt);
            \fill (n1) circle (2pt);
            \fill (n2) circle (2pt);
            \fill (n3) circle (2pt);
            \fill (n4) circle (2pt);
            \fill (n5) circle (0pt);

            \draw[->] (n0) -- (n1) node[midway, right, xshift = 2pt, yshift = -0pt, font=\scriptsize] {$y_i^{(k_0-1)}$};
            \draw[->] (n1) -- (n2) node[midway, above right, xshift = -6pt, yshift = 4pt, font=\scriptsize] {$x_i^{(k_0-1)}$};
            \draw[->, dotted] (n2) -- (n3);
            \draw[->] (n3) -- (n4) node[midway, above right, xshift = -6pt, yshift = 4pt, font=\scriptsize] {$x_i^{(\ell_i-1)}$};
        \end{tikzpicture}
        + \left(\sum_{j=k}^{\ell_i}\tau_i^{(j)}\right)
        \begin{tikzpicture}[baseline=15pt, >=stealth, thick, scale=0.5]
            \draw[gray!60, thin, dashed] (-4, -1) -- (0.5, -1);
            \draw[gray!60, thin, dashed] (-4, 0) -- (0.5, 0);
            \draw[gray!60, thin, dashed] (-4, 1) -- (0.5, 1);
            \draw[gray!60, thin, dashed] (-4, 2.5) -- (0.5, 2.5);
            \draw[gray!60, thin, dashed] (-4, 3.5) -- (0.5, 3.5);

            \coordinate (nInvisible) at (-1,-1);
            \coordinate (n0) at (0,0);
            \coordinate (n1) at (-1,1);
            \coordinate (n2) at (-1.5,1.5);
            \coordinate (n3) at (-2.5,2.5);
            
            \fill (nInvisible) circle (0pt);
            \fill (n0) circle (2pt);
            \fill (n1) circle (2pt);
            \fill (n2) circle (2pt);
            \fill (n3) circle (2pt);

            \draw[->] (n0) -- (n1) node[midway, above right, font=\scriptsize] {$x_i^{(k_0)}$};
            \draw[->, dotted] (n1) -- (n2);
            \draw[->] (n2) -- (n3) node[midway, above right, font=\scriptsize] {$x_i^{(\ell_i-1)}$};
        \end{tikzpicture}.
        \]
	where composition reads right to left, and the height of a node represents the level the flag.
\end{lemma}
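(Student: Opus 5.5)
The plan is to prove a slightly more general statement by induction, using only the moment map equations at the non-central nodes. For $(\bx,\by)\in\mu_\C^{-1}(\tau)$, \eqref{eqn: mu C i} gives, for each $k=1,\dots,\ell_i$,
\[
y_i^{(k)}x_i^{(k)}=x_i^{(k-1)}y_i^{(k-1)}+\tau_i^{(k)}\Id_{\C^{r_i^{(k)}}},
\]
with the conventions $x_i^{(0)}=0$, $y_i^{(0)}=0$. I will only ever move a single $y$ past $x$'s, so the central node equation \eqref{eqn: mu SL}, which involves a trace-free projection, is never needed. This matters, since that equation is not an identity of the same form.

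The generalized claim is: for $1\le k_0\le m\le\ell_i$,
\[
y_i^{(m)}x_i^{(m)}\cdots x_i^{(k_0)}=x_i^{(m-1)}\cdots x_i^{(k_0-1)}y_i^{(k_0-1)}+\Big(\sum_{k=k_0}^{m}\tau_i^{(k)}\Big)x_i^{(m-1)}\cdots x_i^{(k_0)}.
\]
Here a product $x_i^{(m-1)}\cdots x_i^{(m)}$ with decreasing range is read as the identity on $\C^{r_i^{(m)}}$. The lemma is the case $m=\ell_i$.

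I will induct on $m-k_0$. The base case $m=k_0$ is exactly the displayed moment map relation at node $k_0$, with the empty product interpreted as the identity. For the inductive step, apply the relation at node $m$ to the leftmost pair:
\[
y_i^{(m)}x_i^{(m)}x_i^{(m-1)}\cdots x_i^{(k_0)}=x_i^{(m-1)}\big(y_i^{(m-1)}x_i^{(m-1)}\cdots x_i^{(k_0)}\big)+\tau_i^{(m)}x_i^{(m-1)}\cdots x_i^{(k_0)}.
\]
Now apply the inductive hypothesis, for the pair $(m-1,k_0)$, to the parenthesized term. Then left-multiply by $x_i^{(m-1)}$ and collect the $\tau$ terms, which gives the claim for $(m,k_0)$.

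When $k_0=1$, the first term on the right vanishes because $y_i^{(0)}=0$, consistent with the conventions. There is no real obstacle here. The only care needed is the index bookkeeping: the endpoint cases $k_0=1$ and $k_0=\ell_i$ must be handled correctly, and each identity must act on the correct space $\C^{r_i^{(k)}}$ so that the scalars $\tau_i^{(k)}$ commute past the $x$'s.
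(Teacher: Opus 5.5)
Your proof is correct and takes essentially the paper's approach. The paper derives the same non-central relation $y_i^{(k)}x_i^{(k)}=x_i^{(k-1)}y_i^{(k-1)}+\tau_i^{(k)}\Id$ from \eqref{eqn: mu C i} and applies it iteratively; your induction on $m-k_0$, with the general top index $m$, just makes that iteration explicit.
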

\begin{proof}
	By setting the moment map \eqref{eqn: mu C i} equal to $\tau_i^{(k)}$, we get
	\begin{equation}\label{eqn: residue swap rule}
		y_i^{(k)}x_i^{(k)}=x_i^{(k-1)}y_i^{(k-1)}+\tau_i^{(k)}\text{Id}
	\end{equation}
	for all $k=1,\dots,\ell_i$ (as always, we take $x_i^{(0)}=0$ and $y_i^{(0)}=0$).  Applying this iteratively yields the result.
\end{proof}

\begin{proposition}\label{prop: map construction properties}
	The above construction satisfies the following statements:
	\begin{enumerate}[label=(\roman*)]
		\item The Higgs field $\varphi$ is holomorphic at infinity.
		\item At the highest level the complex mass is
		\begin{equation}\label{eqn: highest level complex mass}
			\sigma_i^{(\ell_i+1)}=-\frac1{r_\star}\sum_{k=1}^{\ell_i}r_i^{(k)}\tau_i^{(k)}.
		\end{equation}
		\item The Higgs field is adapted to $(\mathcal F,\sigma)$, i.e.\ $\varphi\in\End_\mathrm{par}(\mathcal E)$ and $\mathrm{gr}(\mathrm{res}_{p_i}(\varphi))=\sigma_i$.
	\end{enumerate}
\end{proposition}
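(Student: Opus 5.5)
The plan is to prove the three items in the order (ii), (iii), (i). Items (ii) and (iii) are bookkeeping with the moment map relation \eqref{eqn: residue swap rule} and the Commutation Rule (\Cref{lemma: xy commutation rule}). Item (i) reduces to the star-node component of $\mu_\C$.

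\emph{Item (ii).} Solving \eqref{eqn: tau sigma relation} downward from the top level gives
\[
\sigma_i^{(k)}=\sigma_i^{(\ell_i+1)}+s_i^{(k)},\qquad s_i^{(k)}:=\sum_{j=k}^{\ell_i}\tau_i^{(j)},\qquad s_i^{(\ell_i+1)}=0 .
\]
Substituting into \eqref{eqn: sigma sum rule} and exchanging the order of summation turns $\sum_k m_i^{(k)}s_i^{(k)}$ into $\sum_j\tau_i^{(j)}\sum_{k\le j}m_i^{(k)}=\sum_j r_i^{(j)}\tau_i^{(j)}$, since the multiplicities telescope to $r_i^{(j)}$. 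Using also $\sum_k m_i^{(k)}=r_\star$, this gives $r_\star\sigma_i^{(\ell_i+1)}+\sum_j r_i^{(j)}\tau_i^{(j)}=0$, which is \eqref{eqn: highest level complex mass}. The same computation with $\beta,\alpha$ gives the footnoted formula for $\alpha_i^{(\ell_i+1)}$.

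\emph{Item (iii).} Set $M=x_i^{(\ell_i)}y_i^{(\ell_i)}$.
\begin{enumerate}[label=\arabic*.]
\item Fix $\dim F_i^{(k)}=r_i^{(k)}$. For stable $(\bx,\by)$, \Cref{lemma: the x are injective} makes every $X_i^{(k)}$ injective, so this holds.
\item Show $M$ preserves the flag and acts by a scalar on each graded piece. Compose the Commutation Rule on the left with $x_i^{(\ell_i)}$:
\[
M X_i^{(k_0)}=X_i^{(k_0-1)}y_i^{(k_0-1)}+s_i^{(k_0)}X_i^{(k_0)} .
\]
The first term has image in $F_i^{(k_0-1)}$. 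Hence $M$ preserves $F_i^{(k_0)}$ and acts on $F_i^{(k_0)}/F_i^{(k_0-1)}$ as the scalar $s_i^{(k_0)}$. For the top piece $E_{p_i}/F_i^{(\ell_i)}$, note $\im M\subseteq F_i^{(\ell_i)}$, so the scalar is $0=s_i^{(\ell_i+1)}$.
\item Compute the trace correction. Taking traces in \eqref{eqn: residue swap rule} and using $\tr(xy)=\tr(yx)$ gives the telescoping relation $\tr(y_i^{(k)}x_i^{(k)})=\tr(y_i^{(k-1)}x_i^{(k-1)})+r_i^{(k)}\tau_i^{(k)}$. Therefore $\tr M=\sum_k r_i^{(k)}\tau_i^{(k)}=-r_\star\sigma_i^{(\ell_i+1)}$ by (ii).
\item Conclude. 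Subtracting $\frac{\tr M}{r_\star}\Id$ shifts every graded scalar by $\sigma_i^{(\ell_i+1)}$. So $\mathrm{gr}(\varphi_i)$ has components $s_i^{(k)}+\sigma_i^{(\ell_i+1)}=\sigma_i^{(k)}$, and $\varphi\in\End_{\mathrm{par},0}(\mathcal E)$ with $\mathrm{gr}(\res_{p_i}\varphi)=\sigma_i$.
\end{enumerate}

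\emph{Item (i).} Since the central node carries $\SL(r_\star,\C)$, the star component of any $\tau\in Z(\mathfrak g_\C)$ vanishes. By \eqref{eqn: mu SL} this means $\sum_i(x_i^{(\ell_i)}y_i^{(\ell_i)})_0=0$, i.e.\ $\sum_i\varphi_i=0$. In the coordinate $w=1/z$,
\[
\varphi=-\sum_i\frac{\varphi_i\,\de w}{w(1-p_iw)} .
\]
Its only possible pole at $w=0$ has residue $-\sum_i\varphi_i=0$, so $\varphi$ is holomorphic at infinity.

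The only real subtlety is the bookkeeping in step 2 of (iii): using the Commutation Rule at the boundary levels $k_0=1$ (where $y_i^{(0)}=0$) and $k_0=\ell_i+1$, and identifying the graded scalars consistently with the indexing of $\sigma$. Injectivity from \Cref{lemma: the x are injective} is needed only to confirm that the flag has type $\vec r$.
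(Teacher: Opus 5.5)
Your proposal is correct and follows essentially the same route as the paper: (ii) by the same reindexing of $\sum_k m_i^{(k)}\sigma_i^{(k)}$, (iii) by composing the Commutation Rule with $x_i^{(\ell_i)}$ and telescoping to get $\tr(x_i^{(\ell_i)}y_i^{(\ell_i)})=\sum_k r_i^{(k)}\tau_i^{(k)}$, and (i) from the vanishing star component of $\mu_\C$. You also fill in details the paper leaves implicit: the top graded piece $E_{p_i}/F_i^{(\ell_i)}$, the dimension check via injectivity, and the explicit computation in the coordinate $w=1/z$.
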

\begin{proof}
	For (i), since $\mu_\C(\bx,\by)=\tau$ the moment map formula \eqref{eqn: mu SL} implies
		\[0=-\mu_{\SL(r,\C)}(\bx,\by)=\sum_{i=1}^n\varphi_i.\]
	A route calculation shows that this sum is the residue of the Higgs field at infinity. Since this residue vanishes, $\varphi$ is holomorphic at infinity.
		
	For (ii), we use the relations between the $\tau$ and $\sigma$ given in \eqref{eqn: tau sigma relation} and \eqref{eqn: sigma sum rule} and reindex the sum to get
	\begin{equation}
		\sum_{k=1}^{\ell_i+1}m_i^{(k)}\sigma_i^{(k)}=\sum_{k=1}^{\ell_i+1}(r_i^{(k)}-r_i^{(k-1)})\sigma_i^{(k)}=r_\star\sigma_i^{(k+1)}+\sum_{k=1}^{\ell_i}r_i^{(k)}(\sigma_i^{(k)}-\sigma_i^{(k+1)})=0.
	\end{equation}
	Since $\sigma_i^{(k)}-\sigma_i^{(k+1)}=\tau_i^{(k)}$, solving for $\sigma_i^{(k+1)}$ yields \eqref{eqn: highest level complex mass}.
	
	To prove (iii), suppose $v\in F_i^{(k_0)}$.  Then by our choice of $F_i^{(k_0)}$, we can write $v=X_i^{(k_0)}w$ for some $w\in\C^{r_i^{(k_0)}}$\!.  Using $(x_i^{(\ell_i)}y_i^{(\ell_i)})_0=x_i^{(\ell_i)}y_i^{(\ell_i)}-\frac1{r_\star}\tr(x_i^{(\ell_i)}y_i^{(\ell_i)})$ and applying \Cref{lemma: xy commutation rule} to commute $y_i^{(\ell_i)}$ past the $x$'s,
	\begin{align}
		\varphi_i v
			&=x_i^{(\ell_i)}y_i^{(\ell_i)}X_i^{(k_0)}w
				-\frac1{r_\star}\tr(x_i^{(\ell_i)}y_i^{(\ell_i)})v
					\notag\\
			&=x_i^{(\ell_i)}x_i^{(\ell_i-1)}\!\cdots x_i^{(k_0-1)}y_i^{(k_0-1)}w + \left(\sum_{k=k_0}^{\ell_i}\tau_i-\frac1{r_\star}\tr(x_i^{(\ell_i)}y_i^{(\ell_i)})\right)v.
				\label{eqn: iterated residue swap}
	\end{align}
	In the last expression, the first term belongs to $F_i^{(k_0-1)}$.  It remains to show the coefficient in the second term is $\sigma_i^{(k)}$.  Applying \eqref{eqn: residue swap rule} iteratively and using the cyclic property of trace,
	\begin{align}
		\tr(x_i^{(\ell_i)}y_i^{(\ell_i)})
			&=\tr(y_i^{(\ell_i)}x_i^{(\ell_i)})
				\notag\\
			&=\tr(x_i^{(\ell_i-1)}y_i^{(\ell_i-1)})+r_i^{(k)}\tau_i^{(k)}
				\notag\\[-5pt]
			&\ \,\vdots
				\notag\\[-12pt]
			&=\sum_{k=1}^{\ell_i}r_i^{(k)}\tau_i^{(k)}
				\notag\\
			&=-r_\star\sigma_i^{(\ell_i+1)}.
				\label{eqn: trace part of phi residue}
	\end{align}
	Thus
		$$\sum_{k=k_0}^{\ell_i}\tau_i - \frac1{r_\star}\tr(x_i^{(\ell_i)}y_i^{(\ell_i)})=\sum_{k=k_0}^{\ell_i}\tau_i+\sigma_i^{(\ell_i+1)}=\sigma_i^{(k_0)}$$
	as desired.
\end{proof}

\subsection{Stability Results}
The next proposition shows that this construction preserves stability.

\begin{theorem}\label{thm: T preserves stability}
	Assume $\tau,\beta$ and $\sigma,\alpha$ are related as above, and assume further that $\sum_{i,k}r_i^{(k)}\beta_i^{(k)}<1$.  If $(\bx,\by)\in\mu_\C^{-1}(\tau)$ is $\beta$-stable, then the Higgs bundle $\mathcal T(\bx,\by)=(E,\delbar_E,\mathcal F,\varphi)$ constructed above is stable.
\end{theorem}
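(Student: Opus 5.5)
The plan is to verify slope stability directly. Since $\mathcal T(\bx,\by)$ has underlying holomorphic bundle $\mathcal O^{\oplus r_\star}$ and all $\alpha_i^{\det}=0$, we have $\mathrm{slope}\,\mathcal E=0$. By the remark after the definition of stability, it suffices to show that every proper nonzero $\varphi$-invariant holomorphic subbundle $S\subset E$ of rank $s$, with its induced parabolic structure, has $\mathrm{pardeg}\,\mathcal S<0$. Write $S_{p_i}=S|_{p_i}$ and $d_i^{(k)}=\dim(S_{p_i}\cap F_i^{(k)})$, so that $d_i^{(\ell_i+1)}=s$. Using $\alpha_i^{(k)}-\alpha_i^{(k+1)}=\beta_i^{(k)}$ and telescoping, the parabolic contribution at $p_i$ is
\[
\sum_{k=1}^{\ell_i+1}\big(d_i^{(k)}-d_i^{(k-1)}\big)\alpha_i^{(k)}=s\,\alpha_i^{(\ell_i+1)}+\sum_{k=1}^{\ell_i}d_i^{(k)}\beta_i^{(k)}.
\]
Since $S\subset\mathcal O^{\oplus r_\star}$, we know $\deg S\le 0$, so I would split into two cases.

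\emph{Case $\deg S\le -1$.} Here $\alpha_i^{(\ell_i+1)}=-\frac1{r_\star}\sum_k r_i^{(k)}\beta_i^{(k)}<0$ and $d_i^{(k)}\le r_i^{(k)}$. Hence the total parabolic contribution is at most $\sum_{i,k}r_i^{(k)}\beta_i^{(k)}<1$ by hypothesis. Therefore $\mathrm{pardeg}\,\mathcal S<-1+1=0$.

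\emph{Case $\deg S=0$.} A degree-zero subbundle of a trivial bundle on $\CP^1$ is itself trivial and is spanned by global sections. Its sections are constant vectors of $\C^{r_\star}$, so $S=W\times\CP^1$ for a fixed subspace $W\subset\C^{r_\star}$ with $0<\dim W<r_\star$. Invariance $\varphi(S)\subset S\otimes K(D)$ then forces $\varphi_i W\subset W$ for every $i$, by taking residues. Since $\varphi_i$ differs from $x_i^{(\ell_i)}y_i^{(\ell_i)}$ by a scalar, this gives $x_i^{(\ell_i)}y_i^{(\ell_i)}W\subset W$.

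I would then build a subrepresentation $R\subset(\bx,\by)$ by placing $W$ at the central node and $R_i^{(k)}=(X_i^{(k)})^{-1}(W)$ on the branches.
\begin{itemize}
\item Closure under the $x$'s is immediate.
\item For $y_i^{(\ell_i)}$ we need $x_i^{(\ell_i)}y_i^{(\ell_i)}W\subset W$, which we have.
\item For the lower $y_i^{(k-1)}$, take $w\in R_i^{(k)}$. By \Cref{lemma: xy commutation rule}, $X_i^{(k-1)}y_i^{(k-1)}w$ equals $x_i^{(\ell_i)}y_i^{(\ell_i)}X_i^{(k)}w$ minus a scalar multiple of $X_i^{(k)}w$. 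Both terms lie in $W$, so $y_i^{(k-1)}w\in R_i^{(k-1)}$.
\end{itemize}
This closure check is the step requiring the most care, in the same way as \Cref{claim: subrep well defined}.

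By \Cref{lemma: the x are injective}, each $X_i^{(k)}$ is injective, so $\dim R_i^{(k)}=\dim(W\cap F_i^{(k)})=d_i^{(k)}$. Summing the parabolic contributions over $i$, and using $\sum_i s\,\alpha_i^{(\ell_i+1)}=s\beta_\star$ from \eqref{eqn: beta star}, gives
\[
\mathrm{pardeg}\,\mathcal S=s\beta_\star+\sum_{i,k}d_i^{(k)}\beta_i^{(k)}=\tilde\beta(R).
\]
Since $R$ is a nonzero proper subrepresentation, $\beta$-stability gives $\tilde\beta(R)<0$, which completes the argument.
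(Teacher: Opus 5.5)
Your proof is correct and follows the paper's argument essentially step for step. It uses the same split into $\deg S\le -1$, handled by the hypothesis $\sum_{i,k}r_i^{(k)}\beta_i^{(k)}<1$, and $\deg S=0$. In the second case $S=W\times\CP^1$ gives the subrepresentation with branch spaces $(X_i^{(k)})^{-1}(W)$, which is closed under the $y$'s by the commutation rule and whose weight $\tilde\beta$ equals $\mathrm{pardeg}\,\mathcal S$. The only differences are that you argue directly rather than by contrapositive, and that you spell out the residue argument giving $\varphi_iW\subset W$, which the paper leaves implicit.
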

\begin{proof}
	In light of \Cref{prop: King stability}, we let $\tilde\beta$ be the extended GIT parameter on $\tilde G_\C=\GL(r_\star,\C)\!\times\!\prod_{\substack{i=1,\dots,n\\k=1,\dots,\ell_i}}\!\GL(r_i^{(k)},\C)$ from \Cref{def: extended GIT weight vector} with the weight on the central node
	\begin{equation}\label{eqn: beta star again}
		\beta_\star=-\frac1{r_\star}\sum_{\substack{i=1,\dots,n\\k=1,\dots,\ell_i}}r_i^{(k)}\beta_i^{(k)}.
	\end{equation}
	We will prove the contrapositive statement.  Assume $\mathcal H\subseteq\mathcal E$ is a destabilizing (proper) $\varphi$-invariant holomorphic subbundle of $E$.  We may as well assume $\mathcal H$ has the induced parabolic structure, as that has maximal slope.  Say $s_\star=\rk\mathcal H$, $s_i^{(k)}=\dim\mathcal H_{p_i}\cap F_i^{(k)}$ for $k=1,\dots,\ell_i$, $s_i^{(0)}=0$, and $s_i^{(\ell_i+1)}=s_\star$.  The induced parabolic weight multiplicities are $m_{S,i}^{(k)}=s_i^{(k)}-s_i^{(k-1)}$.
	
	Since $\mathcal H$ is a holomorphic subbundle of a trivial bundle, it cannot have positive degree.  In the case where $\deg\mathcal H\leq-1$ we have
	{\allowdisplaybreaks
	\begin{align}
		\mathrm{pardeg}\,\mathcal H
			&=\deg H+\sum_{i=1}^n\sum_{k=1}^{\ell_i+1}m_{S,i}^{(k)}\alpha_i^{(k)}
				\notag\\
			&\leq\deg H+\sum_{i=1}^n\sum_{k=1}^{\ell_i+1}(s_i^{(k)}-s_i^{(k-1)})\alpha_i^{(k)}
				\notag\\
			&=\deg H+\sum_{i=1}^n\left(s_i^{(\ell_i+1)}\alpha_i^{(\ell_i+1)}+\sum_{k=1}^{\ell_i}s_i^{(k)}(\alpha_i^{(k)}-\alpha_i^{(k+1)})\right)
				\notag\\
			&=\deg H+\sum_{i=1}^n\left(s_i^{(\ell_i+1)}\alpha_i^{(\ell_i+1)}+\sum_{k=1}^{\ell_i}s_i^{(k)}\beta_i^{(k)}\right)
				\notag\\
			&<-1+\sum_{i=1}^n\sum_{k=1}^{\ell_i}r_i^{(k)}\beta_i^{(k)}
				\notag\\
			&<0
	\end{align}
	}by our hypothesis $\sum_{i,k}r_i^{(k)}\beta_i^{(k)}<1$, and since each $s_i^{(k)}\leq r_i^{(k)}$ and $\alpha_i^{(\ell_i+1)}<0$.  Since $\mathrm{slope}\,\mathcal E=0$, this implies $\mathcal H$ does not destabilize $\mathcal E$, a contradiction.  Let us therefore assume $\deg\mathcal H=0$.  By the Birkhoff-Grothendiek theorem, $\mathcal H$ must be trivial.\footnote{
		Indeed, if $\mathcal H=\mathcal O(d_1)\oplus\cdots\mathcal O(d_{s_\star})$ is a subbundle of $(E,\delbar_E)\cong\mathcal O^{\oplus r_\star}$, then each $d_i\leq0$.  But $\sum_i d_i=\deg\mathcal H=0$, so $\mathcal H=\mathcal O^{\oplus s_\star}$.
	}  Say $\mathcal H=\mathcal O_C\otimes V$ for some $s_\star$-dimensional vector space $V$.  Because $\mathcal H$ destabilizes $\mathcal E$, just as above we have
	\begin{equation}
		0<\mathrm{pardeg}\,\mathcal H=\sum_{i=1}^n\left(s_i^{(\ell_i+1)}\alpha_i^{(\ell_i+1)}+\sum_{k=1}^{\ell_i}s_i^{(k)}\beta_i^{(k)}\right).
	\end{equation}
	Consider the subrepresentation $S$ of $\tilde Q$ with vector space $S_\star=V$ at the central node and with preimages $S_i^{(k)}=(X_i^{(k)})^{-1}(V)$ at the remaining nodes, with arrows obtained by restricting the $x_i^{(k)}$ and $y_i^{(k)}$.  The dimension vector of $S$ is $\vec s=\left(s_\star,(s_i^{(k)})_{i,k}\right)$. 
	\begin{claim}\label{claim: subrep well defined}
		This subrepresentation $S$ is well-defined.
	\end{claim}
	\begin{proof}[Proof of \Cref{claim: subrep well defined}]
		It is clear that $x_i^{(k)}$ maps $S_i^{(k)}$ into $S_i^{(k+1)}$.  If $v\in S_i^{(k)}$, then $X_i^{(k)}v\in V$.  To prove $y_i^{(k-1)}v\in S_i^{(k-1)}$, we must show $X_i^{(k-1)}y_i^{(k-1)}v\in V$.  Applying the commutation rule \Cref{lemma: xy commutation rule}, then $\tau_i^{(k)}=\sigma_i^{(k)}-\sigma_i^{(k+1)}$ from \eqref{eqn: tau sigma relation}, then the formula \eqref{eqn: trace part of phi residue} for $\tr(x_i^{(\ell_i)}y_i^{(\ell_i)})$ yields
		\begin{align}
			X_i^{(k-1)}y_i^{(k-1)}v
				&=x_i^{(\ell_i)}y_i^{(\ell_i)}X_i^{(k)}v-\left(\sum_{j=k}^{\ell_i}\tau_i^{(j)}\right)X_i^{(k)}v
					\notag\\
				&=x_i^{(\ell_i)}y_i^{(\ell_i)}X_i^{(k)}v+\left(\sigma_i^{(\ell_i+1)}-\sigma_i^{(k)}\right)X_i^{(k)}v
					\notag\\
				&=\left(x_i^{(\ell_i)}y_i^{(\ell_i)}\right)_0X_i^{(k)}v-\sigma_i^{(k)}X_i^{(k)}v,
		\end{align}
		which belongs to $V$ since $\varphi_i=\left(x_i^{(\ell_i)}y_i^{(\ell_i)}\right)_0$ preserves $V$.
	\end{proof}
	Since each $x_i^{(k)}$ is injective (\Cref{lemma: the x are injective}), the dimension vector $\vec s$ of $S$ is given by $s_\star$ and the $s_i^{(k)}$ from above.  Observe that
	\begin{align}
		r_\star\beta_\star
			&=-\sum_{\substack{i=1,\dots,n\\k=1,\dots,\ell_i}}r_i^{(k)}\beta_i^{(k)}
				\notag\\
			&=-\sum_{\substack{i=1,\dots,n\\k=1,\dots,\ell_i}}r_i^{(k)}(\alpha_i^{(k)}-\alpha_i^{(k+1)})
				\notag\\
			&=\sum_{i=1}^n\left(r_i^{(\ell_i)}\alpha_i^{(\ell_i+1)}-\sum_{k=1}^{\ell_i}(r_i^{(k)}-r_i^{(k-1)})\alpha_i^{(k)}\right)
				\notag\\
			&=\sum_{i=1}^n\left(r_i^{(\ell_i)}\alpha_i^{(\ell_i+1)}+m_i^{(\ell_i+1)}\alpha_i^{(\ell_i+1)}-\sum_{k=1}^{\ell_i+1}m_i^{(k)}\alpha_i^{(k)}\right)
				\notag\\
			&=\sum_{i=1}^n r_\star\alpha_i^{(\ell_i+1)}
	\end{align}
	(the last equality uses $\sum_{k=1}^{\ell_i+1}m_i^{(k)}\alpha_i^{(k)}=0$ and $m_i^{(\ell_i+1)}=r_\star-r_i^{(\ell_i)}$).  Lastly, we calculate
	\vspace{5pt}
	\begin{align}
		\tilde\beta(S)
			=s_\star\beta_\star+\sum_{\substack{i=1,\dots,n\\k=1,\dots,\ell_i}}s_i^{(k)}\beta_i^{(k)}
			=\sum_{i=1}^n\left(s_\star\alpha_i^{(\ell_i+1)}+\sum_{k=1}^{\ell_i}s_i^{(k)}\beta_i^{(k)}\right)
			=\mathrm{slope}\,\mathcal H
			>0
	\end{align}
	so $(\bx,\by)$ is unstable by \Cref{prop: King stability}.
\end{proof}

\begin{remark}
	For generic $\beta,\alpha$, the notions of stability and semistability coincide.  In fact, both parameter spaces have chamber structures which in general only match up in the half space defined by $\sum_{i,k}r_i^{(k)}\beta_i^{(k)}<1$.  Hence this hypothesis is necessary for \Cref{thm: T preserves stability} to hold.  Although we chose $\alpha$ to satisfy $\alpha_i^{(k)}-\alpha_i^{(k+1)}=\beta_i$ for the construction of $\mathcal T$, all the results hold when we vary $\alpha$ and $\beta$ within their given chambers.  See \cite[Section 3]{FY26} for a thorough exploration in the $n=4$, $r_\star=2$, strongly parabolic case.
\end{remark}

If $(\bx,\by)$ and $(\bx',\by')$ are $G_\C$-equivalent elements of $\mu_\C^{-1}(\tau)$, our construction produces Higgs bundles $\mathcal T(\bx,\by),\mathcal T(\bx',\by')$ which are equivalent up to a constant $g\in H^0(C,\SL(E))$.  Although the construction of $\mathcal M_{(Q,\vec r)}(\sigma,\alpha)=M_{(Q,\vec r,\mathcal F)}(\sigma,\alpha)/\mathfrak G_\C$ in \Cref{sec: Higgs bundle moduli space construction} holds the flags fixed, in light of \Cref{rem: changing flags} each $(\bx,\by)\in\mathcal X_{(Q,\vec r)}(\tau,\beta)$ unambiguously determines an element of $\mathcal M_{(Q,\vec r)}(\sigma,\alpha)$ regardless of which flags are used in the construction.  We have therefore created a map of moduli spaces $\mathcal T\from\mathcal X_{(Q,\vec r)}(\tau,\beta)\to\mathcal M_{(Q,\vec r)}(\sigma,\alpha)$.

\subsection{\texorpdfstring{Properties of $\mathcal T$}{Properties of T}}
\begin{theorem}\label{thm: T is a homeomorphism onto its image}
	The map $\mathcal T\from\mathcal X_{(Q,\vec r)}(\tau,\beta)\to\mathcal M_{(Q,\vec r)}(\sigma,\alpha)$ is a homeomorphism onto its image, which is the subspace $\mathcal M_0$ consisting of Higgs bundles with underlying holomorphic structure isomorphic to $\mathcal O^{\oplus r_\star}$.
\end{theorem}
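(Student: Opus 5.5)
The plan has four parts: show the image of $\mathcal T$ lies in $\mathcal M_0$, show every element of $\mathcal M_0$ is hit, show $\mathcal T$ is injective, and show the inverse is continuous.

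\textbf{Image and surjectivity.} By construction every $\mathcal T(\bx,\by)$ has underlying bundle $\mathcal O^{\oplus r_\star}$, so $\mathcal T(\mathcal X)\subseteq\mathcal M_0$. For the reverse inclusion, take $(E,\delbar_E,\mathcal F,\varphi)\in\mathcal M_0$. Fix a holomorphic trivialization $E\cong\mathcal O\otimes\C^{r_\star}$. Then $\varphi$ is a meromorphic $\mathfrak{sl}(r_\star,\C)$-valued $1$-form on $\CP^1$ with simple poles at the $p_i$ and no pole at $\infty$. Hence
\[\varphi=\sum_i\frac{\varphi_i\,\de z}{z-p_i},\qquad \sum_i\varphi_i=0,\]
with each $\varphi_i$ parabolic and $\mathrm{gr}(\varphi_i)=\sigma_i$.

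Build a quiver representation as follows.
\begin{itemize}
\item At node $(i,k)$ put $\C^{r_i^{(k)}}\cong F_i^{(k)}$, and let each $x_i^{(k)}$ be the inclusion $F_i^{(k)}\into F_i^{(k+1)}$.
\item Let $y_i^{(k)}$ be the restriction of $\varphi_i-\sigma_i^{(k+1)}\Id$ to $F_i^{(k+1)}$.
\end{itemize}
This $y_i^{(k)}$ lands in $F_i^{(k)}$ precisely because $\varphi_i$ acts on $F_i^{(k+1)}/F_i^{(k)}$ by the scalar $\sigma_i^{(k+1)}$. On $F_i^{(k)}$ this gives
\[y_i^{(k)}x_i^{(k)}-x_i^{(k-1)}y_i^{(k-1)}=(\varphi_i-\sigma_i^{(k+1)})-(\varphi_i-\sigma_i^{(k)})=\tau_i^{(k)}\]
by \eqref{eqn: tau sigma relation}. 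At the central node we have
\[\bigl(x_i^{(\ell_i)}y_i^{(\ell_i)}\bigr)_0=\bigl(\varphi_i-\sigma_i^{(\ell_i+1)}\bigr)_0=\varphi_i,\]
so $\sum_i\varphi_i=0$ gives $\mu_{\C,\star}=0$. Hence $(\bx,\by)\in\mu_\C^{-1}(\tau)$, and visibly $\mathcal T(\bx,\by)$ recovers the given Higgs bundle.

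\textbf{Stability of the preimage.} It remains to check that $(\bx,\by)$ is $\beta$-stable; this is a converse to \Cref{thm: T preserves stability}. Let $S$ be a proper nonzero subrepresentation with central space $V=S_\star$.

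Since $x_i^{(\ell_i)}y_i^{(\ell_i)}$ preserves $V$, every $\varphi_i$ preserves $V$, so $\mathcal H=\mathcal O\otimes V$ is a $\varphi$-invariant trivial subbundle. Because the $x$'s are inclusions, $S_i^{(k)}\subseteq V\cap F_i^{(k)}$, so $s_i^{(k)}\le t_i^{(k)}:=\dim(V\cap F_i^{(k)})$. Since $\beta_i^{(k)}>0$, the computation at the end of the proof of \Cref{thm: T preserves stability} gives
\[\tilde\beta(S)\le\sum_i\Bigl(s_\star\alpha_i^{(\ell_i+1)}+\sum_k t_i^{(k)}\beta_i^{(k)}\Bigr)=\mathrm{pardeg}\,\mathcal H<0\]
whenever $0<s_\star<r_\star$.

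Two edge cases remain. If $V=0$, then $\tilde\beta(S)=\sum s_i^{(k)}\beta_i^{(k)}$ is positive; but then $x$ injective forces $S=0$. If $V=\C^{r_\star}$, then $\tilde\beta(S)\le\tilde\beta(R)=0$ with equality only if $S=R$. So $(\bx,\by)$ is stable.

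\textbf{Injectivity.} Suppose $\mathcal T(\bx,\by)\cong\mathcal T(\bx',\by')$. An isomorphism of parabolic Higgs bundles on trivial bundles is a constant $g\in\SL(r_\star,\C)$ carrying flags to flags and $\varphi_i$ to $\varphi'_i$. By \Cref{lemma: the x are injective}, the maps $X_i^{(k)}$ identify $\C^{r_i^{(k)}}$ with $F_i^{(k)}$, and under this identification any stable representation is $G_\C$-equivalent to the normal form above. The normal form depends only on the flags and residues; here one uses \Cref{lemma: xy commutation rule} to see that $y_i^{(k-1)}$ is determined by $\varphi_i$. Therefore $g$ together with the induced maps on the flags is an element of $G_\C$ relating the two representations.

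\textbf{Homeomorphism.} Continuity of $\mathcal T$ is clear at the level of $\mu_\C^{-1}(\tau)^{\mathrm{st}}$, which passes to quotients. The main obstacle is continuity of $\mathcal T^{-1}$ on $\mathcal M_0$ with its quotient topology.

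I would first try the following. On the open trivial locus of the configuration space, $h^0(E,\delbar_A)=r_\star$ is constant, so $H^0$ forms a continuous (indeed smooth) vector bundle over that locus. Near any point, choose a continuous local frame of global holomorphic sections. This gives a continuous local trivialization $(\delbar_A,\Phi)\mapsto(\text{flags},\text{residues})$, and composing with the normal-form construction gives a continuous local lift of $\mathcal T^{-1}$ into $\mu_\C^{-1}(\tau)^{\mathrm{st}}$. Its projection to $\mathcal X$ is independent of choices by injectivity, so $\mathcal T^{-1}$ is continuous.
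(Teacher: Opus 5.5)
Your proposal is correct and follows the paper's route. It uses the same explicit inverse: the $x$'s are the inclusions of the flags, and $y_i^{(k)}$ is $\varphi_i-\sigma_i^{(k+1)}\mathrm{Id}$ restricted to $F_i^{(k+1)}$. Injectivity rests on the commutation rule, exactly as in the paper's check that $\mathcal T^{-1}\circ\mathcal T=\mathrm{Id}$. Beyond the paper, you also prove that the reconstructed representation is $\beta$-stable, using the $\varphi$-invariant trivial subbundle $\mathcal O\otimes S_\star$ (an argument that does not even need $\sum_{i,k}r_i^{(k)}\beta_i^{(k)}<1$). You further sketch continuity of $\mathcal T^{-1}$ via the kernel bundle of $\delbar_A$. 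The paper's proof leaves both of these points implicit, though they are needed for the inverse to land in $\mathcal X$ and for the homeomorphism claim.
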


\begin{proof}
	We construct an inverse map.  Let $(E,\delbar_E,\mathcal F,\varphi)\in\mathcal M_0\subseteq\mathcal M_{(Q,\vec r)}(\sigma,\alpha)$.  Since $(E,\delbar_E)$ is isomorphic to $\mathcal O^{\oplus r_\star}$, we may as well pick a trivialization $E=\CP^1\times\C^{r_\star}$ in which $\delbar_E$ is the standard one.  Choose a basis to identify $\mathcal F_i^{(k)}\cong\C^{r_i^{(k)}}$ for $i=1,\dots,n$ and $k=1,\dots,\ell_i+1$.  To define $\bx=(x_i^{(k)})\in\Rep Q$, we let $x_i^{(k)}$ be the inclusion $F_i^{(k)}\into F_i^{(k+1)}$.  Let $\varphi_i=\mathrm{res}_{p_i}\varphi$.  Since the image of $\varphi_i-\sigma_i^{(\ell_i+1)}\mathrm{Id}$ is contained in $F_i^{(\ell_i)}$, the map factors through the inclusion $x_i^{(\ell_i)}$, so we have
		\[\varphi_i-\sigma_i^{(\ell_i+1)}\mathrm{Id}=x_i^{(\ell_i)}y_i^{(\ell_i)}\]
	for a unique $y_i^{(\ell_i)}\from\C^{r_i^{(\ell_i)}}\to\C^{r_i^{(\ell_i-1)}}$.  Similarly, we can write
		\[\left(\varphi_i-\sigma_i^{(k+1)}\mathrm{Id}\right)X_i^{(k+1)}=X_i^{(k)}y_i^{(k)}\]
	for some $y_i^{(k)}\from\C^{r_i^{(k+1)}}\to\C^{r_i^{(k)}}$ (recall our notation $X_i^{(k)}=x_i^{(\ell_i)}\cdots x_i^{(k)}$).  Call this $(\bx,\by)=\mathcal T^{-1}(E,\delbar_E,\mathcal F,\varphi)$.  Since $\varphi=\sum\frac{\varphi_i}{z-p_i}\de z$ is holomorphic at infinity,\footnote{
		Higgs fields of this form span an $n-1$ dimensional subspace of $H^0(C,K_C\otimes O(D))$.  Meanwhile $H^0(C,K_C\otimes\mathcal O(D))\cong H^0(C,\mathcal O(n-2))\cong\C^{n-1}$, so every global section is of this form.
	}
	\begin{equation}
		\mu_{\C,\star}(\bx,\by)=-\sum_i(x_i^{(\ell_i)}y_i^{(\ell_i)})_0=-\sum_i\left(\varphi_i-\sigma_i^{(k+1)}\mathrm{Id}\right)_0=-\sum_i\varphi_i=0.
	\end{equation}
	To evaluate the remaining components of $\mu_\C$, we compute
	\begin{align}
		X_i^{(k)}\mu_{\C,j}(\bx,\by)
			&=X_i^{(k)}\left(y_j^{(k)}x_j^{(k)}-x_i^{(k-1)}y_j^{(k-1)}\right)
				\notag\\
			&=\left(\varphi_i-\sigma_i^{(k+1)}\mathrm{Id}\right)X_i^{(k)}-\left(\varphi_i-\sigma_i^{(k)}\mathrm{Id}\right)X_i^{(k)}
				\notag\\
			&=X_i^{(k)}\left(\sigma_i^{(k)}-\sigma_i^{(k+1)}\right)\mathrm{Id}.
	\end{align}
	Since $X_i^{(k)}$ is injective and $\sigma_i^{(k)}-\sigma_i^{(k+1)}=\tau_i^{(k)}$ (as complex numbers), we have $\mu_{\C,j}(\bx,\by)=\tau_i^{(k)}\mathrm{Id}$.  Therefore $(\bx,\by)\in\mu_\C^{-1}(\tau)$.

	Our choice of trivialization of $(E,\delbar_E)=\mathcal O^{\oplus r_\star}$ determined the gauge up to the action of $H^0(C,\SL(E))\cong\SL(r_\star,\C)$, and our choice of basis for each $F_i^{(k)}$ is absorbed by the remaining factors of the group $G_\C=\SL(r_\star,\C)\times\prod\GL(r_i^{(k)},\C)$.  Thus, the $G_\C$-equivalence class of the $(\bx,\by)$ we constructed is uniquely determined.
	
	To verify $\mathcal T\circ\mathcal T^{-1}=\mathrm{Id}_\mathcal M$, we calculate $\mathcal T(\bx,\by)=(E,\delbar_E',\mathcal F',\varphi')$ for the $(\bx,\by)$ we just constructed.  By construction, $\delbar_E'=\delbar_E$ and $\mathcal F'=\mathcal F$, and
	\begin{equation}
		\res_{p_i}\varphi'
			=(x_i^{(\ell_i)}y_i^{(\ell_i)})_0
			=\left(\varphi_i-\sigma_i^{(k+1)}\mathrm{Id}\right)_0=\varphi_i.
	\end{equation}
	Since $\varphi'$ is determined by its residues, we have $\varphi'=\varphi$.

	Now we verify $\mathcal T^{-1}\circ\mathcal T=\Id_{\mathcal X}$.  Let $(\bx,\by)\in\mathcal X_{(Q,\vec r)}(\tau,\beta)$.  Put $\mathcal T(\bx,\by)=(E,\delbar_E,\mathcal F,\varphi)$, and let $(\bx',\by')$ be constructed from $(E,\delbar_E,\mathcal F,\varphi)$ as above.  In this construction, we chose identifications $F_i^{(k)}\cong\C^{r_i^{(k)}}$, and these choices did not affect the equivalence class of the resulting quiver representation.  But $F_i^{(k)}=\Im(X_i^{(k)})$ and each $X_i^{(k)}$ is injective (\Cref{lemma: the x are injective}), so we may as well pick bases so that $x_i'^{(k)}=x_i^{(k)}$.\footnote{
		First, we choose a basis on $F_i^{(\ell_i)}$ so that the inclusion $F_i^{(\ell_i)}\into E_{p_i}$ is given by $x_i^{(\ell_i)}$.  We recursively define the remaining $x_i^{(k)}$ (with $k$ decreasing) by picking a basis on $F_i^{(k)}$ so that the inclusion into $F_i^{(k+1)}$ (whose basis we have already chosen) is given by $x_i^{(k)}$.
	}
	As for the $y_i'^{(k)}$, we have
	\begin{align}
		X_i^{(k)}y_i'^{(k)}
			&=\left(\varphi_i-\sigma_i^{(k+1)}\Id\right)X_i^{(k+1)}
				\notag\\
			&=\left(x_i^{(\ell_i)}y_i^{(\ell_i)}-\frac1{r_\star}\tr(x_i^{(\ell_i)}y_i^{(\ell_i)})-\sigma_i^{(k+1)}\Id\right)X_i^{(k+1)}.
				\label{eqn: X y prime}
	\end{align}
	By the commutation rule (\Cref{lemma: xy commutation rule}) we have
		\[x_i^{(\ell_i)}y_i^{(\ell_i)}X_i^{(k+1)}=X_i^{(k)}y_i^{(k)}+X_i^{(k+1)}\left(\sum_{j=k+1}^{\ell_i}\tau_i^{(j)}\right).\]
	We calculated in \eqref{eqn: trace part of phi residue} that $\tr(x_i^{(\ell_i)}y_i^{(\ell_i)})=-r_\star\sigma_i^{(\ell_i)}$.  Thus \eqref{eqn: X y prime} becomes
	\begin{align}
		X_i^{(k)}y_i'^{(k)}=X_i^{(k)}\left(y_i^{(k)}+\left(\sum_{j=k+1}^{\ell_i}\tau_i^{(j)}\right)+\sigma_i^{(\ell_i)}-\sigma_i^{(k+1)}\right)=X_i^{(k)}y_i^{(k)}
	\end{align}
	as $\tau_i^{(j)}=\sigma_i^{(j)}-\sigma_i^{(j+1)}$.  Since $X_i^{(k)}$ is injective, this implies $y_i'^{(k)}=y_i^{(k)}$.  This completes the proof.
\end{proof}

\section{Holomorphic Symplectic Forms}\label{sec: holomorphic symplectic forms}
In this section we prove our main theorem:
\begin{theorem}\label{thm: holo symplectic forms agree}
	Let $\mathcal T\from\mathcal X_{(Q,\vec r)}(\tau,\beta)\to\mathcal M_{(Q,\vec r)}(\sigma,\alpha)$ be the map from the Nakajima quiver variety to the parabolic Hitchin moduli space constructed in \Cref{def: T construction} and subject to the hypothesis of \Cref{thm: T preserves stability}.  Let $\Omega_\mathrm{quiv}$ and $\Omega_\mathrm{Higgs}$ be the distinguished holomorphic symplectic forms on $\mathcal X$ and $\mathcal M$, respectively.  Then $\mathcal T^*\Omega_\mathrm{Higgs}=2\pi\Omega_\mathrm{quiv}.$
\end{theorem}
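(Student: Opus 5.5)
Fix a representative $(\bx,\by)\in\mu_\C^{-1}(\tau)$ and two tangent vectors $(\dot\bx_j,\dot\by_j)\in\ker\de\mu_\C$, $j=1,2$. By the footnote after \eqref{eqn: quiver holo symp form}, gauge orthogonality is not needed, so any such pair may be used to evaluate $\Omega_\mathrm{quiv}$. The image family $\mathcal T(\bx+t\dot\bx_j,\by+t\dot\by_j)$ keeps $\delbar_E$ fixed (trivial), moves the flags $F_i^{(k)}(t)=\im X_i^{(k)}(t)$, and has residues $\varphi_i(t)=(x_i^{(\ell_i)}y_i^{(\ell_i)})_0(t)$. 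This is a holomorphic deformation $(\dot\nu_j,\dot\varphi_j)$ with
\[\dot\varphi_j=\sum_i\frac{(\dot x_{j,i}^{(\ell_i)}y_i^{(\ell_i)}+x_i^{(\ell_i)}\dot y_{j,i}^{(\ell_i)})_0}{z-p_i}\,\de z.\]
The plan is to evaluate \Cref{prop: Omega on holo deformation} for a convenient choice of $\dot\nu_j$ and reduce the integral to residues.

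\textbf{Choice of $\dot\nu$.} Construct $\dot\nu_j$ as a smooth endomorphism that equals a constant matrix $N_{j,i}$ on a small disk $U_i$ around each $p_i$ and is supported in $\bigcup_i U_i'$ for slightly larger disks. On $\C^{r_i^{(k+1)}}$, injectivity of the $x$'s (\Cref{lemma: the x are injective}) lets us choose $N_{j,i}$ with
\[N_{j,i}X_i^{(k)}=\dot X_{j,i}^{(k)} \pmod{F_i^{(k)}}\quad\text{for all }k.\]
This can be done recursively down the flag, for example by choosing complements and setting $N_{j,i}x_i^{(\ell_i)}=\dot x_{j,i}^{(\ell_i)}$ and so on. Such an $N_{j,i}$ moves the flag to first order, so by \Cref{lemma: infinitesimal flag change} it induces the first variation.

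\textbf{Reduction to residues.} Since $\dot\nu_j$ is locally constant near $D$ and $\varphi,\dot\varphi_j$ are holomorphic away from $D$, we have $\delbar\dot\nu_j\wedge\Psi=\delbar(\dot\nu_j\Psi)$ for meromorphic $\Psi$. Stokes' theorem on $C\setminus\bigcup U_i$ then turns each integrand into boundary integrals around $\partial U_i$, and these become $2\pi\mathbbm i$ times residues. This produces the factor $2\pi$ and converts the $-\mathbbm i$ into a real sign. Because $\dot\nu_j$ vanishes near the other punctures, only $p_i$ contributes near $p_i$, and we obtain
\[\Omega_\mathrm{Higgs}=2\pi\sum_i\tr\left(N_{1,i}(\dot\varphi_{2,i}+[\varphi_i,N_{2,i}])-N_{2,i}(\dot\varphi_{1,i}+[\varphi_i,N_{1,i}])\right),\]
up to a global sign to be checked. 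This expression is branch-local, and the cross terms $\tr(N_1[\varphi,N_2])-\tr(N_2[\varphi,N_1])$ combine to $2\tr(\varphi_i[N_2,N_1])$.

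\textbf{Branch-local identity.} The remaining task is the purely algebraic identity that this branch-$i$ expression equals $\sum_k\tr(\dot x_{1,i}^{(k)}\dot y_{2,i}^{(k)}-\dot x_{2,i}^{(k)}\dot y_{1,i}^{(k)})$. In the strongly parabolic case $\tau=0$ I would do this cleanly at the level of 1-forms: the analogue of $L_\mathrm{Higgs}$ gives $2\pi\sum_i\tr(N_i\varphi_i)$. Substituting $\varphi_i=x^{(\ell_i)}y^{(\ell_i)}$, using cyclicity and $N X^{(k)}\equiv\dot X^{(k)}$, and telescoping with the commutation rule (\Cref{lemma: xy commutation rule}) and $\mu_\C=0$ should give $2\pi\sum_k\tr(\dot x^{(k)}y^{(k)})=2\pi L_\mathrm{quiv}$. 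Taking $\de$ then finishes that case. The trace normalization $(\cdot)_0$ is harmless here, since $\tr N_i$ can be arranged so that its contribution vanishes; alternatively, $\sum_i\varphi_i=0$ kills the identity part.

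\textbf{Main obstacle.} For general $\tau$ neither tautological form descends, so the bilinear identity has to be verified directly. I would expand $\dot\varphi_{j,i}$ and write each $N$ in block form adapted to a splitting $\C^{r_\star}=\bigoplus_k$ (complements of $F^{(k-1)}$ in $F^{(k)}$). I would then repeatedly use the commutation relation \eqref{eqn: residue swap rule} and its linearization $\dot y^{(k)}x^{(k)}+y^{(k)}\dot x^{(k)}=\dot x^{(k-1)}y^{(k-1)}+x^{(k-1)}\dot y^{(k-1)}$ to telescope level by level from $\ell_i$ down to $1$. The $\tau$-terms, which come from the scalar Levi parts $\sigma_i^{(k)}$, should cancel against the ambiguity of $N$ modulo $\mathfrak p_i$. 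This is what makes the answer independent of the choice of $\dot\nu$, which is also guaranteed a priori by \Cref{rem: harmonic representative not necessary}. I expect this bookkeeping to be the laborious core of the proof.
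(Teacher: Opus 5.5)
\textbf{There is a genuine gap: the cross term in your residue formula is counted twice.} Your outline otherwise matches the paper's. You fix $\delbar_E$, encode the flag motion in $\dot\nu$, use Stokes to reduce to residues, and prove a branch-local identity. Your $L_\mathrm{Higgs}$ route for $\tau=0$ is exactly \Cref{thm: tautological 1-forms agree}. Your argument that $N_{j,i}$ carries $F(t)$ back to $F(0)$ to first order is a fine substitute for the computation in \Cref{prop: nu induces flag change}. The problem is the residue formula, and as a result the branch-local identity you set out to prove is false.

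The step $\delbar\dot\nu_j\wedge\Psi=\delbar(\dot\nu_j\Psi)$ requires $\Psi$ to be holomorphic on the support of $\delbar\dot\nu_j$. That holds for $\Psi=\dot\varphi_{j'}$. It fails for $\Psi=[\varphi,\dot\nu_{j'}]$, because both cut-offs vary on the same annulus $U_i'\setminus U_i$. The paper instead first combines the cross terms:
\[\tr\bigl(\delbar\dot\nu_1\wedge[\varphi,\dot\nu_2]-\delbar\dot\nu_2\wedge[\varphi,\dot\nu_1]\bigr)=\tr\bigl(\delbar(\dot\nu_2\dot\nu_1-\dot\nu_1\dot\nu_2)\wedge\varphi\bigr).\]
The right side is $\delbar$-exact away from $D$. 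This yields \eqref{eqn: Stokes on holo symplectic form}, whose local cross term is $\tr([N_{2,i},N_{1,i}]\varphi_i)$ with coefficient $1$, not $2$.

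Your expression exceeds the correct one by $\tr(\varphi_i[N_{2,i},N_{1,i}])$, and this is generically nonzero. Take a branch with $r_\star=2$, $\ell_i=1$, $r_i^{(1)}=1$ and $N_j=\dot x_jx^\dagger/|x|^2$. Then the extra term equals $\bigl((x^\dagger\dot x_1)(y\dot x_2)-(x^\dagger\dot x_2)(y\dot x_1)\bigr)/|x|^2$. For $x=e_1$, $y=(\tau,a)$, $\dot x_1=e_1$, $\dot x_2=e_2$ it is $a$. Concretely, the correct local sum is $-a-s=\Omega_i$, while yours is $-s$; here $s$ is the free entry of $\dot y_1$.

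Your own invariance heuristic would have caught this. Replace $N_{1,i}$ by $N_{1,i}+P$ with $P\in\mathfrak p_i$. Your expression then changes by $\tr(P[\varphi_i,N_{2,i}])$, which can be nonzero. The corrected expression changes by $\tr(P\dot\Phi_2)=0$.

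\textbf{The weakly parabolic identity is only described, not proved.} Even after the correction, this identity (\Cref{lemma: Omega at single puncture}) is the real content of the theorem, and your last paragraph does not carry it out. The paper makes it tractable with the specific choice $N_{j,i}=\sum_k\dot B_j^{(k)}$, where $\dot B_j^{(k)}=X^{(k+1)}\dot x_j^{(k)}X^{(-k)}$ and $X^{(-k)}=(X^{(k)\dagger}X^{(k)})^{-1}X^{(k)\dagger}$. Two identities then drive the argument:
\[\dot B^{(k)}X^{(k)}=X^{(k+1)}\dot x^{(k)},\qquad X^{(-(k_0+1))}\dot B^{(m)}=x^{(k_0)}X^{(-k_0)}\dot B^{(m)}\ \text{ for } m<k_0.\]
These support a descending induction on $k_0$ that peels off $\Omega_i^{(k_0)}$ and tracks remainder terms $R_m^{(k_0)}$ and $S_{k,m}^{(k_0)}$.

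The $\tau^{(k_0)}$-terms do not cancel against the freedom in $N$, as you suggest. They cancel because $\tr(X^{(-k_0)}\dot B_2^{(k)}\dot B_1^{(m)}X^{(k_0)})=\tr(\dot B_2^{(k)}\dot B_1^{(m)})$ for $k<k_0$, and these traces appear antisymmetrized over a symmetric index set. If you want to keep a general $N$, first reduce to such a choice using the gauge invariance in \Cref{subsec: Omega independence}.

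Separately, your sample recursion is inconsistent. Imposing $N x^{(\ell)}=\dot x^{(\ell)}$ exactly forces $\dot x^{(\ell-1)}\in\im x^{(\ell-1)}$ at the next level. You must build $N$ from the bottom of the flag up, or simply use the formula above.
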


We will prove this theorem by considering the pushforward of deformations $(\dot\bx,\dot\by)$ of a quiver representation $(\bx,\by)\in\mu_\C^{-1}(\tau)\cap\mu_\R^{-1}(\beta)$.  In the \Cref{subsec: deformation of parabolic flag data} we find the endomorphism $\dot\nu$ which induces the first variation of the parabolic flag data in the sense of \Cref{def: flag changing endomorphism}.  In \Cref{subsec: strongly parabolic case via tautological 1-form}, we prove the theorem in the strongly parabolic case $\sigma=0,\tau=0$ by showing the tautological 1-forms satisfy $\mathcal T^*L_\mathrm{Higgs}=2\pi L_\mathrm{quiv}$.  In \Cref{subsec: weakly parabolic case} we perform a much longer calculation to prove the theorem in the general case.  Although the latter implies the former, we include the tautological 1-form calculation as it is far more transparent.

\subsection{Deformation of Parabolic Flag Data}\label{subsec: deformation of parabolic flag data}

Suppose $(\bx(t),\by(t))=(\bx,\by)+t(\dot\bx,\dot\by)+O(t^2)\in\mu_\C^{-1}(\tau)\cap\mu_\R^{-1}(\beta)$, where $(\bx,\by)\in\mu_\C^{-1}(\tau)\cap\mu_\R^{-1}(\beta)$ and $(\dot\bx,\dot\by)$ is a unitary deformation of $(\bx,\by)$ (see \Cref{def: unitary deformation of quiver rep}).  Differentiating the components of the equation $\mu_\C(\bx(t),\by(t))=\tau$ (see \eqref{eqn: mu C i}) gives
\begin{equation}\label{eqn: de mu equation}
	\de\mu_\C\big|_{(\bx,\by)}(\dot\bx,\dot\by)=\dot y_i^{(k)}x_i^{(k)}+y_i^{(k)}\dot x_i^{(k)}-\dot x_i^{(k-1)}y_i^{(k-1)}-x_i^{(k-1)}\dot y_i^{(k-1)}=0
\end{equation}
for all $i=1,\dots,n$ and $k=1,\dots,\ell_i$.  

Let $(\,\delbar_E,\mathcal F(t),\varphi(t))=\mathcal T(\bx(t),\by(t))$.  The residue of the first variation $\dot\varphi$ of the Higgs field at $p_i$ is
\begin{equation}\label{eqn: dot phi residue}
	\dot\varphi_i=\dot x_i^{(\ell_i)}y_i^{(\ell_i)}+x_i^{(\ell_i)}\dot y_i^{(\ell_i)},
\end{equation}
which is already trace-free by iterative application of \eqref{eqn: de mu equation}.  The variation of the flag data is induced by an appropriate $\dot\nu\in C^\infty(C,\End(E))$ which we now construct.  Recall our notation from \eqref{eqn: def of X}
	\[X_i^{(k)}=x_i^{(\ell_i)}\cdots x_i^{(k)}\]
and recall from the construction of $\mathcal T$ that $F_i^{(k)}=\im X_i^{(k)}$.  Since each $x_i^{(k)}$ is injective (\Cref{lemma: the x are injective}) $X_i^{(k)}$ has full rank, and hence $(X_i^{(k)})^\dagger X_i^{(k)}$ is invertible.  We will denote $X_i^{(-k)}=(X_i^{(k)\dagger}X_i^{(k)})^{-1}X_i^{(k)\dagger}$, which is a left inverse of $X_i^{(k)}$ for $k=1,\dots,\ell$.  Set
\begin{equation}\label{eqn: def of B}
	\dot B_i^{(k)}=X_i^{(k+1)}\dot x_i^{(k)}X_i^{(-k)},
\end{equation}
which satisfies the key property
\begin{equation}\label{eqn: B defining property}
	\dot B_i^{(k)}X_i^{(k)}=X_i^{(k+1)}\dot x_i^{(k)}.
\end{equation}
In fact, for any $k_0<k$ we have
	\[\dot B_i^{(k)}X_i^{(k_0)}=X_i^{(k+1)}\dot x_i^{(k)}x_i^{(k-1)\dots(k_0)}\]
where $x_i^{(b)\dots(a)}:=x_i^{(b)}\cdots x_i^{(a)}$ for all $a\leq b$.  Diagrammatically, this is
	\begin{equation*}
        \begin{tikzpicture}[baseline=(current bounding box.center), scale=0.55, >=stealth, thick]
            \draw[gray!60, thin, dashed] (-1.5, -2) -- (9.5, -2);
            \node[left, gray!80, font=\scriptsize] at (-1.5, -2) {$\C^{r_i^{(j)}}$};
            
            \draw[gray!60, thin, dashed] (-1.5, -1) -- (9.5, -1);
            \node[left, gray!80, font=\scriptsize] at (-1.5, -1) {$\C^{r_i^{(k)}}$};
            
            \draw[gray!60, thin, dashed] (-1.5, 0) -- (9.5, 0);
            \node[left, gray!80, font=\scriptsize] at (-1.5, 0) {$\C^{r_i^{(k+1)}}$};
            
            \draw[gray!60, thin, dashed] (-1.5, 2) -- (9.5, 2);
            \node[left, gray!80, font=\scriptsize] at (-1.5, 2) {$\C^{r_\star}$};

            \coordinate (A0) at (9, -2);
            \coordinate (A2) at (5, 2);
            \coordinate (A3) at (2, -1);
            \coordinate (A4) at (1, 0);
            \coordinate (A5) at (-1, 2);

            \draw[->] (A0) -- (A2) node[midway, above right, xshift=2pt, yshift=-4pt, font=\scriptsize] {$X_i^{(j)}$};
            \draw[->] (A2) -- (A3) node[midway, above =6pt, xshift = -2pt, yshift = -2pt, font=\scriptsize] {$X_i^{(-k)}$};
            \draw[->] (A3) -- (A4) node[midway, above = 2pt, xshift = 5pt, font=\scriptsize] {$\dot x_i^{(k)}$};
            \draw[->] (A4) -- (A5) node[midway, above = 2pt, xshift = 10pt, font=\scriptsize] {$X_i^{(k+1)}$};

            \foreach \p in {0,2,3,4,5} {
                \fill (A\p) circle (1.5pt);
            }
        \end{tikzpicture}
        \ = \ 
        \begin{tikzpicture}[baseline=(current bounding box.center), scale=0.55, >=stealth, thick]
            \draw[gray!60, thin, dashed] (0.5, -2) -- (5.5, -2);
            \node[left, gray!80, font=\scriptsize] at (0.5, -2) {$\C^{r_i^{(j)}}$};
            
            \draw[gray!60, thin, dashed] (0.5, -1) -- (5.5, -1);
            \node[left, gray!80, font=\scriptsize] at (0.5, -1) {$\C^{r_i^{(k)}}$};
            
            \draw[gray!60, thin, dashed] (0.5, 0) -- (5.5, 0);
            \node[left, gray!80, font=\scriptsize] at (0.5, 0) {$\C^{r_i^{(k+1)}}$};
            
            \draw[gray!60, thin, dashed] (0.5, 2) -- (5.5, 2);
            \node[left, gray!80, font=\scriptsize] at (0.5, 2) {$\C^{r_\star}$};

            \coordinate (B0) at (5, -2);
            \coordinate (B3) at (4, -1);
            \coordinate (B4) at (3, 0);
            \coordinate (B5) at (1, 2);

            \draw[->] (B0) -- (B3) node[midway, above right, xshift = -2pt, font=\scriptsize] {$x_i^{(k-1)\dots(j)}$};
            \draw[->] (B3) -- (B4) node[midway, above = 2pt, xshift = 5pt, font=\scriptsize] {$\dot x_i^{(k)}$};
            \draw[->] (B4) -- (B5) node[midway, above = 2pt, xshift = 10pt, font=\scriptsize] {$X_i^{(k+1)}$};

            \foreach \p in {0,3,4,5} {
                \fill (B\p) circle (1.5pt);
            }
        \end{tikzpicture}
    \end{equation*}
\begin{proposition}\label{prop: nu induces flag change}
	Let $\dot\nu\in C^\infty(C,\End(E))$ be any section with
\begin{equation}\label{eqn: nu at punctures}
	\dot\nu(p_i)=\sum_{k=1}^{\ell_i}\dot B_i^{(k)}=\sum_{k=1}^{\ell_i}X_i^{(k+1)}\dot x_i^{(k)}X_i^{(-k)}.
\end{equation}
	Then $\dot\nu$ induces the first variation of $\mathcal F(t)$ in the sense of \Cref{def: flag changing endomorphism}.
\end{proposition}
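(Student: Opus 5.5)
The plan is to check \Cref{def: flag changing endomorphism} directly. Only the residues at the punctures depend on the flags, and $\mu_{\C,\mathrm{Higgs}}=\delbar_A\Phi$ is gauge-covariant. So it suffices to show two things. First, to first order in $t$, the automorphism $e^{t\dot\nu(p_i)}$ of $E_{p_i}$ carries the flag $F_i^{\bullet}(0)$ onto $F_i^\bullet(t)$. Second, the Levi part of the conjugated residue is still $\sigma_i$. The values of $\dot\nu$ away from $D$ are irrelevant: conjugation by $e^{t\dot\nu}$ preserves $\delbar_A\Phi=0$, and $\mathfrak G_\C$-invariance handles the rest.

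\emph{Step 1: derivative of $X_i^{(k)}$.} By the Leibniz rule applied to $X_i^{(k)}(t)=x_i^{(\ell_i)}(t)\cdots x_i^{(k)}(t)$,
\[
\dot X_i^{(k)}=\sum_{j=k}^{\ell_i}X_i^{(j+1)}\dot x_i^{(j)}x_i^{(j-1)\dots(k)}=\sum_{j=k}^{\ell_i}\dot B_i^{(j)}X_i^{(k)},
\]
using the extended key property $\dot B_i^{(j)}X_i^{(k)}=X_i^{(j+1)}\dot x_i^{(j)}x_i^{(j-1)\dots(k)}$ for $k\le j$, which follows from \eqref{eqn: B defining property}. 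For the remaining terms $j<k$, we have $\dot B_i^{(j)}X_i^{(k)}=X_i^{(j+1)}(\cdots)$, whose image lies in $F_i^{(j+1)}\subseteq F_i^{(k)}=\im X_i^{(k)}$. Since $X_i^{(k)}$ is injective (\Cref{lemma: the x are injective}), we can write
\[
\dot B_i^{(j)}X_i^{(k)}=X_i^{(k)}M_{i,j}^{(k)}
\]
for a unique matrix $M_{i,j}^{(k)}$, explicitly $M_{i,j}^{(k)}=X_i^{(-k)}\dot B_i^{(j)}X_i^{(k)}$. Summing over $j$ gives
\[
\dot\nu(p_i)X_i^{(k)}=\dot X_i^{(k)}+X_i^{(k)}M_i^{(k)},\qquad M_i^{(k)}=\sum_{j<k}M_{i,j}^{(k)}.
\]
Hence
\[
e^{t\dot\nu(p_i)}X_i^{(k)}=X_i^{(k)}(t)\bigl(\Id+tM_i^{(k)}\bigr)+O(t^2).
\]
Because $\Id+tM_i^{(k)}$ is invertible for small $t$, the two sides have the same image. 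This gives $e^{t\dot\nu(p_i)}F_i^{(k)}(0)=F_i^{(k)}(t)$ up to $O(t^2)$, for every $k$.

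\emph{Step 2: residue conditions.} By \Cref{prop: map construction properties}(iii), $\varphi_i(t)$ is parabolic with respect to $F_i^\bullet(t)$ and has $\mathrm{gr}=\sigma_i$. From Step 1, conjugation by $e^{t\dot\nu(p_i)}$ maps $F_i^\bullet(0)$ onto $F_i^\bullet(t)$ to first order. Therefore $e^{-t\dot\nu}\varphi_i(t)e^{t\dot\nu}$ preserves $F_i^\bullet(0)$ to first order. It also induces on each graded piece the transported endomorphism, which is the scalar $\sigma_i^{(k)}$ because scalars are conjugation-invariant. So the conjugated pair lies in $M_{(Q,\vec r,\mathcal F(0))}(\sigma)$ modulo $O(t^2)$. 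Equivalently, one can verify the criterion of \Cref{lemma: infinitesimal flag change}: that $\dot\varphi_i+[\varphi_i,\dot\nu(p_i)]$ maps $F_i^{(k)}$ into $F_i^{(k-1)}$. This follows by differentiating $(\varphi_i(t)-\sigma_i^{(k)})X_i^{(k)}(t)\in\im X_i^{(k-1)}(t)$ and substituting Step 1.

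I expect the main obstacle to be the bookkeeping in Step 1. One must correctly separate the terms $j\ge k$, which reproduce $\dot X_i^{(k)}$ exactly, from the terms $j<k$, which are absorbed by a change of basis of $\C^{r_i^{(k)}}$. One must also interpret "up to $O(t^2)$" consistently, both for the flags and for the Levi condition.
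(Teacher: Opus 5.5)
Your proposal is correct, and it takes a genuinely different route from the paper.

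The paper goes straight to the criterion of \Cref{lemma: infinitesimal flag change}. It shows that $\dot\Phi_i=\dot\varphi_i+[\varphi_i,\dot\nu(p_i)]$ is strongly parabolic by a long induction (\Cref{claim: Phi dot X induction}). That induction uses \eqref{eqn: de mu equation}, \eqref{eqn: B defining property} and \Cref{lemma: xy commutation rule} to push the dotted factors in $\dot\Phi_iX_i^{(k_0)}$ rightward until the image visibly lies in $F_i^{(k_0-1)}$.

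You instead isolate the geometric content. From $\dot\nu(p_i)X_i^{(k)}=\dot X_i^{(k)}+X_i^{(k)}M_i^{(k)}$, the map $e^{t\dot\nu(p_i)}$ carries $F_i^\bullet(0)$ onto $F_i^\bullet(t)$ to first order, and adaptedness is then transported by conjugation.

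Your ``equivalently'' remark also settles the $O(t^2)$ bookkeeping you were worried about. Differentiate $(\varphi_i(t)-\sigma_i^{(k)})X_i^{(k)}(t)=X_i^{(k-1)}(t)Y(t)$ and use Step 1 at levels $k$ and $k-1$. This gives exactly $\dot\Phi_iX_i^{(k)}=X_i^{(k-1)}\bigl(\dot Y+YM_i^{(k)}-M_i^{(k-1)}Y\bigr)$. At the edges, take $Y=0$ when $k=1$, and $X_i^{(\ell_i+1)}=\Id$, $M_i^{(\ell_i+1)}=\dot\nu(p_i)$ at the top level.

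What your route buys:
\begin{itemize}
\item It is much shorter.
\item It explains why $\sum_k\dot B_i^{(k)}$ is the natural choice of $\dot\nu(p_i)$.
\item The identity above uses nothing about $\varphi_i(t)$ beyond adaptedness to $(F_i^\bullet(t),\sigma_i)$. So it verifies the condition for \emph{every} such family, which is what \Cref{def: flag changing endomorphism} literally requires. The paper's computation only checks the particular family coming from $\mathcal T(\bx(t),\by(t))$.
\end{itemize}

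What the paper's route buys:
\begin{itemize}
\item It needs only first-order data $(\dot\bx,\dot\by)\in\ker\de\mu_\C$. You instead invoke \Cref{prop: map construction properties}(iii) along an honest curve in $\mu_\C^{-1}(\tau)$, which the setup of \Cref{subsec: deformation of parabolic flag data} does provide.
\item It develops the R1/R2/R3 machinery that is reused in \Cref{lemma: Omega at single puncture}.
\end{itemize}

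A minor wording fix: $e^{t\dot\nu}$ is generally not in $\mathfrak G_\C$, since it need not be parabolic for $\mathcal F(0)$ or have determinant one. What you actually use, and have, is that conjugation by any smooth automorphism preserves $\delbar_A\Phi=0$ away from $D$ and preserves tracelessness.
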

\begin{proof}
	In light of \Cref{lemma: infinitesimal flag change}, we must show that the residue $\dot\Phi=\dot\varphi+[\varphi,\dot\nu]$ is strongly parabolic with respect to $\mathcal F$.  Let $\varphi_i,\dot\varphi_i,\dot\Phi_i$ be the residues of $\varphi,\dot\varphi,\dot\Phi$ and $p_i$.  Using the key property \eqref{eqn: B defining property} of $\dot B_i^{(k)}$ in the case $k=\ell_i$,
	\begin{align}
		\dot\Phi_i
			=(\dot\varphi_i+[\varphi,\dot\nu(p_i)])
				&=\dot x_i^{(\ell_i)}y_i^{(\ell_i)}+x_i^{(\ell_i)}\dot y_i^{(\ell_i)}+\sum_{k=1}^{\ell_i}x_i^{(\ell_i)}y_i^{(\ell_i)}\dot B_i^{(k)}-\dot B_i^{(k)}x_i^{(\ell_i)}y_i^{(\ell_i)}
					\notag\\
				&=x_i^{(\ell_i)}\dot y_i^{(\ell_i)}+x_i^{(\ell_i)}y_i^{(\ell_i)}B_i^{(\ell_i)}+\sum_{k=1}^{\ell_i-1}\left(x_i^{(\ell_i)}y_i^{(\ell_i)}\dot B_i^{(k)}-\dot B_i^{(k)}x_i^{(\ell_i)}y_i^{(\ell_i)}\right).
					\label{eqn: Phi dot expansion}
	\end{align}
	Therefore
	\begin{align}
		\dot\Phi_iX_i^{(k_0)}
			&=x_i^{(\ell_i)}\dot y_i^{(\ell_i)}X_i^{(k_0)}
				+x_i^{(\ell_i)}y_i^{(\ell_i)}\dot x_i^{(\ell_i)}x_i^{(\ell_i-1)\dots(k_0)}	\notag\\
				&\qquad+\sum_{k=1}^{\ell_i-1}\left(x_i^{(\ell_i)}y_i^{(\ell_i)}\dot B_i^{(k)}-\dot B_i^{(k)}x_i^{(\ell_i)}y_i^{(\ell_i)}\right)X_i^{(k_0)}
					\label{eqn: Phi dot X}
	\end{align}
	The idea of this proof is to use the moment map equations from \eqref{eqn: mu C i}
	\begin{equation}\label{eqn: mu equation restated}
		\tau_i^{(k)}\Id=\mu_{\C,i}^{(k)}=y_i^{(k)}x_i^{(k)}-x_i^{(k-1)}y_i^{(k-1)}
	\end{equation}
	and their linearizations \eqref{eqn: de mu equation} to iteratively commute the $\dot x$, $\dot y$ parts in \eqref{eqn: Phi dot X} to the right.  The following claim describes the resulting formula at any given stage.
	\begin{claim}\label{claim: Phi dot X induction}
		For all $k=k_0,\dots,\ell_i$ we have
		\begin{align}
			\dot\Phi_iX_i^{(k_0)}
				&=X_i^{(k)}\dot y_i^{(k)}x_i^{(k)\dots(k_0)}+X_i^{(k)}y_i^{(k)}\dot x_i^{(k)}x_i^{(k-1)\dots(k_0)}	\notag\\
					&\qquad+\sum_{j=1}^{k-1}\left(x_i^{(\ell_i)}y_i^{(\ell_i)}B_i^{(j)}-B_i^{(j)}x_i^{(\ell_i)}y_i^{(\ell_i)}\right)X_i^{(k_0)}
						\label{eqn: Phi dot X induction}
		\end{align}
		(in the case $k=k_0$ the $x_i^{(k-1)\dots,(k_0)}$ is omitted).
	\end{claim}
	\begin{proof}[Proof of \Cref{claim: Phi dot X induction}]
		The $k=\ell_i$ case is \eqref{eqn: Phi dot X}.  Proceeding by induction, we assume the result holds for $k>k_0$, and show it also holds for $k-1$.  Using the linearized complex moment map equation from \eqref{eqn: de mu equation} (relation R1), the key property of $\dot B_i^{(k)}$ \eqref{eqn: B defining property} (relation R2), and the commutation rule \Cref{lemma: xy commutation rule} (relation R3), we calculate the sum of the first two terms above with the $j=k-1$ term from the summation.  In each line, the under-braced parts get replaced by the over-bracketed parts in the next line by using one of these three relations.
		{\allowdisplaybreaks
		\begin{align}
			&X_i^{(k)}\underbrace{\dot y_i^{(k)}x_i^{(k)}}_{R1} x_i^{(k-1)\dots(k_0)}+X_i^{(k)}\underbrace{y_i^{(k)}\dot x_i^{(k)}}_{R1} x_i^{(k-1)\dots(k_0)}	
					\notag\\
				&\qquad+X_i^{(\ell_i)}y_i^{(\ell_i)}\underbrace{B_i^{(k-1)}X_i^{(k_0)}}_{R2}-B_i^{(k-1)}x_i^{(\ell_i)}\underbrace{y_i^{(\ell_i)}X_i^{(k_0)}}_{R3}\\
			&=X_i^{(k)}\overbracket{x_i^{(k-1)}\dot y_i^{(k-1)}}^{R1}x_i^{(k-1)\dots(k_0)}+X_i^{(k)}\overbracket{\dot x_i^{(k-1)}y_i^{(k-1)}}^{R1}x_i^{(k-1)\dots(k_0)}	
					\notag\\
				&\qquad+x_i^{(\ell_i)}\underbrace{y_i^{(\ell_i)}\hspace{4ex}}_{R3}\hspace{-4ex}\overbracket{X_i^{(k)}\dot x_i^{(k-1)}}^{R2}X_i^{(k-2)\dots(k_0)}	
					\notag\\
				&\qquad-\underbrace{B_i^{(k-1)}\hspace{7ex}}_{R2}\hspace{-7ex}\overbracket{X_i^{(k-1)}y_i^{(k-1)}}^{R3}x_i^{(k-1)\dots(k_0)}+\overbracket{\sum_{m=k}^{\ell_i}\tau_i^{(m)}}^{R3}\underbrace{B_i^{(k-1)}X_i^{(k_0)}}_{R2}\\
			&=X_i^{(k-1)}\dot y_i^{(k-1)}x_i^{(k-1)\dots(k_0)}+X_i^{(k)}\dot x_i^{(k-1)}y_i^{(k-1)}x_i^{(k-1)\dots(k_0)}	
					\notag\\
				&\qquad+\overbracket{X_i^{(k-1)}y_i^{(k-1)}}^{R3}\dot x_i^{(k-1)}x_i^{(k-2)\dots(k_0)}-\overbracket{\sum_{m=k}^{\ell_i}\tau_i^{(m)}}^{R3}X_i^{(k)}\dot x_i^{(k)}x_i^{(k-2)\dots(k_0)}	
					\notag\\
				&\qquad-\overbracket{X_i^{(k)}\dot x_i^{(k-1)}}^{R2}y_i^{(k-1)}x_i^{(k-1)\dots(k_0)}+\sum_{m=k}^{\ell_i}\tau_i^{(m)}\overbracket{X_i^{(k)}\dot x_i^{(k)}}^{R2}x_i^{(k-2)\dots(k_0)}\\
			&=X_i^{(k-1)}\dot y_i^{(k-1)}x_i^{(k-1)\dots(k_0)}+X_i^{(k-1)}y_i^{(k-1)}\dot x_i^{(k-1)}x_i^{(k-2)\dots(k_0)}.
		\end{align}
		}The last line is the first two terms in the desired formula \eqref{eqn: Phi dot X induction}.  Since we left the remaining summands alone, this completes the induction step.  The claim follows.
	\end{proof}
	Now we use the claim in the case $k=k_0$ and simplify:
	{\allowdisplaybreaks
	\begin{align}
		\dot\Phi_iX_i^{(k_0)}
			&=X_i^{(k_0)}\underbrace{\dot y_i^{(k_0)}x_i^{(k_0)}}_{R1}+X_i^{(k_0)}\underbrace{y_i^{(k_0)}\dot x_i^{(k_0)}}_{R1}
					\notag\\
				&\qquad+\sum_{j=1}^{k_0-1}\left(x_i^{(\ell_i)}y_i^{(\ell_i)}B_i^{(j)}X_i^{(k_0)}-B_i^{(j)}x_i^{(\ell_i)}\underbrace{y_i^{(\ell_i)}X_i^{(k_0)}}_{R3}\right)	\notag\\
			&=X_i^{(k_0)}\overbracket{x_i^{(k_0-1)}\dot y_i^{(k_0-1)}}^{R1}+X_i^{(k_0)}\overbracket{\dot x_i^{(k_0-1)}y_i^{(k_0-1)}}^{R1}
					\notag\\
				&\qquad+\sum_{j=1}^{k_0-1}\Bigg(x_i^{(\ell_i)}\underbrace{y_i^{(\ell_i)}X_i^{(j+1)}}_{R3}\dot x_i^{(j)}X_i^{(-j)}X_i^{(k_0)}-B_i^{(j)}\overbracket{X_i^{(k_0-1)}y_i^{(k_0-1)}}^{R3}
					\notag\\
				&\qquad\qquad\qquad-\overbracket{\sum_{m=k_0}^{\ell_i}\tau_i^{(m)}}^{R3}B_i^{(j)}X_i^{(k_0)}\Bigg)
					\notag\\
			&=X_i^{(k_0-1)}\dot y_i^{(k_0-1)}+X_i^{(k_0)}\dot x_i^{(k_0-1)}y_i^{(k_0-1)}
					\notag\\
				&\qquad+\sum_{j=1}^{k_0-1}\Bigg(\overbracket{X_i^{(k_0-1)}y_i^{(k_0-1)}}^{R3}x_i^{(k_0-1)\dots(j+1)}\dot x_i^{(j)}X_i^{(-j)}X_i^{(k_0)}
					\notag\\
				&\qquad\qquad\qquad+\overbracket{\sum_{m=k_0}^{\ell_i}\tau_i^{(m)}}^{R3}X^{(j+1)}\dot x_i^{(j)}X_i^{(-j)}X_i^{(k_0)}
					-\sum_{m=k_0}^{\ell_i}\tau_i^{(m)}B_i^{(j)}X_i^{(k_0)}\Bigg)
					\notag\\
				&\qquad+\sum_{j=1}^{k_0-1}B_j^{(j)}X_i^{(k_0-1)}y_i^{(k_0-1)}
					\notag\\
			&=X_i^{(k_0-1)}\dot y_i^{(k_0-1)}
				+\sum_{j=1}^{k_0-1}X_i^{(k_0-1)}y_i^{(k_0-1)}x_i^{(k_0-1)\dots(j+1)}\dot x_i^{(j)}X_i^{(-j)}X_i^{(k_0)}
					\notag\\
				&\qquad+\sum_{j=1}^{k_0-2}B_i^{(j)}X_i^{(k_0-1)}y_i^{(k_0-1)}
	\end{align}
	}where the last equality uses relation R2 and the definition of $\dot B_i^{(k)}$ \eqref{eqn: def of B} to cancel terms.  Because $B_i^{(j)}$ factors through $X_i^{(k_0-1)}$ for $j\leq k_0-2$, this shows $\im(\dot\Phi_i X_i^{(k_0)})\subseteq\im(X_i^{(k_0-1)})=F_i^{(k_0-1)}$.\footnote{
		In the case $k_0=1$, our convention to take $x_i^{(0)}=0$ and $y_i^{(0)}=0$ shows $\dot\Phi_iX_i^{(1)}=0$.  Doing this is consistent with the correct interpretation of \eqref{eqn: mu C i} for $k=0$.
	}
\end{proof}

\subsection{Strongly Parabolic Case via Tautological 1-Forms}\label{subsec: strongly parabolic case via tautological 1-form}
Recall that in the strongly parabolic case (where $\tau,\sigma=0$), the holomorphic symplectic forms $\Omega_\mathrm{quiv},\Omega_\mathrm{Higgs}$ are the exterior derivatives of the tautological 1-forms $L_\mathrm{quiv},L_\mathrm{Higgs}$, respectively.  Hence, we can reduce \Cref{thm: holo symplectic forms agree} to the following.
\begin{theorem}\label{thm: tautological 1-forms agree}
	In the strongly parabolic case $\sigma,\tau=0$, we have $\mathcal T^*L_\mathrm{Higgs}=2\pi\,L_\mathrm{quiv}$.
\end{theorem}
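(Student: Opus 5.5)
Fix a representation $(\bx,\by)\in\mu_\C^{-1}(0)\cap\mu_\R^{-1}(\beta)$ and a unitary deformation $(\dot\bx,\dot\by)$. Its pushforward under $\mathcal T$ is the holomorphic deformation $(\dot\nu,\dot\varphi)$ of $\mathcal T(\bx,\by)=(\delbar_E,\mathcal F,\varphi)$. Here $\delbar_E$ is the fixed trivial structure, $\dot\varphi=\sum_i\dot\varphi_i\,\de z/(z-p_i)$ with $\dot\varphi_i$ as in \eqref{eqn: dot phi residue}, and $\dot\nu$ is any smooth section satisfying \eqref{eqn: nu at punctures}. By \Cref{prop: nu induces flag change} this $\dot\nu$ induces the first variation of the flags. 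So \Cref{prop: Omega on holo deformation} gives
\[L_\mathrm{Higgs}(\dot\nu,\dot\varphi)=-\mathbbm i\int_C\tr(\delbar_E\dot\nu\wedge\varphi),\]
and the task reduces to evaluating this integral and comparing it with $2\pi\sum_{i,k}\tr(\dot x_i^{(k)}y_i^{(k)})$.

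The integral is computed by Stokes' theorem. Write $\tr(\delbar\dot\nu\wedge\varphi)=\pm\,\de\,\tr(\dot\nu\varphi)$ away from $D$, which is valid because $\varphi$ is holomorphic there and a $(1,0)$-form. Excise small discs $B_\epsilon(p_i)$; no disc is needed at $\infty$, since $\varphi$ is holomorphic there by \Cref{prop: map construction properties}(i). The integral becomes a sum of boundary terms $\mp\oint_{\partial B_\epsilon(p_i)}\tr(\dot\nu\varphi)$. As $\epsilon\to0$, continuity of $\dot\nu$ and the simple pole of $\varphi$ give $\oint\to 2\pi\mathbbm i\,\tr(\dot\nu(p_i)\varphi_i)$. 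Multiplying by $-\mathbbm i$ produces $2\pi\sum_i\tr(\dot\nu(p_i)\varphi_i)$, up to a sign that I will fix by carefully tracking the orientation of $\partial B_\epsilon$ and the order in the wedge product. This is where the factor $2\pi$ comes from.

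It remains to prove the residue identity $\tr(\dot\nu(p_i)\varphi_i)=\sum_{k=1}^{\ell_i}\tr(\dot x_i^{(k)}y_i^{(k)})$. Since $\tau=0$, we have $\varphi_i=x_i^{(\ell_i)}y_i^{(\ell_i)}$, with trace-free part equal to itself by \eqref{eqn: trace part of phi residue}. I will treat each summand of \eqref{eqn: nu at punctures} separately. Using cyclicity,
\[\tr(\dot B_i^{(k)}\varphi_i)=\tr\bigl(X_i^{(k+1)}\dot x_i^{(k)}X_i^{(-k)}x_i^{(\ell_i)}y_i^{(\ell_i)}\bigr)=\tr\bigl(\dot x_i^{(k)}X_i^{(-k)}x_i^{(\ell_i)}y_i^{(\ell_i)}X_i^{(k+1)}\bigr).\]
Next apply the commutation rule \Cref{lemma: xy commutation rule} with $\tau=0$: $y_i^{(\ell_i)}X_i^{(k+1)}=x_i^{(\ell_i-1)}\cdots x_i^{(k)}y_i^{(k)}$. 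Hence $x_i^{(\ell_i)}y_i^{(\ell_i)}X_i^{(k+1)}=X_i^{(k)}y_i^{(k)}$, and $X_i^{(-k)}X_i^{(k)}=\Id$ collapses the expression to $\tr(\dot x_i^{(k)}y_i^{(k)})$. For $k=\ell_i$ the convention $X_i^{(\ell_i+1)}=\Id$ makes the same computation work. Summing over $k$ and $i$ yields $2\pi L_\mathrm{quiv}(\dot\bx,\dot\by)$, and this holds independently of the choice of extension $\dot\nu$.

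I expect the main obstacle to be the analytic and sign bookkeeping in the Stokes step, rather than the algebra. Two points need care. First, the orientation conventions must produce exactly $+2\pi$. Second, I must justify that the tautological formula of \Cref{prop: Omega on holo deformation} applies even though $\dot\nu$ is only required to be smooth and the choice of $\dot\nu$ is non-unique. For the latter, I would note that changing $\dot\nu$ by a section vanishing at $D$ changes the integral by $\int\de\,\tr(\cdot\,\varphi)=0$. Finally, I will confirm that the integrand has an integrable singularity near $p_i$, so that excising discs is legitimate.
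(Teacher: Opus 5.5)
Your route is the paper's. You reduce via \Cref{prop: Omega on holo deformation} to $-\mathbbm i\int_C\tr(\delbar_E\dot\nu\wedge\varphi)$, localize at the punctures by Stokes, and prove $\tr(\dot\nu(p_i)\varphi_i)=\sum_k\tr(\dot x_i^{(k)}y_i^{(k)})$ summand by summand using cyclicity and \Cref{lemma: xy commutation rule}. Your residue computation is exactly \Cref{lemma: local tautological pairing}. Your remarks on the freedom in extending $\dot\nu$ off $D$ and on integrability near $p_i$ are correct points the paper leaves implicit.

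The step you deferred, however, does not come out the way you predict. Away from $D$ one has $\de\,\tr(\dot\nu\varphi)=\tr(\delbar_E\dot\nu\wedge\varphi)$ with a plus sign. The boundary of $C\sm\bigcup_iB_\epsilon(p_i)$ runs \emph{clockwise} around each $p_i$, so
\[
\int_C\tr(\delbar_E\dot\nu\wedge\varphi)=-2\pi\mathbbm i\sum_{i=1}^n\tr(\dot\nu(p_i)\varphi_i).
\]
The local model is the Cauchy--Pompeiu identity $\int_\C\delbar f\wedge\frac{\de z}{z}=-2\pi\mathbbm i\,f(0)$ for compactly supported $f$. Multiplying by $-\mathbbm i$ gives $-2\pi\sum_i\tr(\dot\nu(p_i)\varphi_i)$. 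Hence $\mathcal T^*L_\mathrm{Higgs}=-2\pi L_\mathrm{quiv}$ under the conventions as literally stated: $L_\mathrm{Higgs}=-\mathbbm i\int\tr(\dot A\wedge\Phi)$, $L_\mathrm{quiv}=\tr(\by\dot\bx)$, and the complex orientation of $\CP^1$.

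The paper reaches $+2\pi$ only through an orientation inconsistency. Its Stokes line equates $\int_{\P^1}$ with $\sum_i\oint_{\partial B_\delta(p_i)}$, which requires the clockwise orientation, and then evaluates those loops counterclockwise. The same slip recurs in \eqref{eqn: Stokes on holo symplectic form}.

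So careful bookkeeping will not deliver ``exactly $+2\pi$.'' The identity holds only up to an overall sign, which can be fixed only by changing a convention, e.g.\ the sign in $L_\mathrm{Higgs}$ and correspondingly in $\Omega_\mathrm{Higgs}$. This is a defect of the source rather than of your method, but you should record the sign you actually obtain rather than force the stated one.
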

\begin{proof}
	Let $(\bx,\by)\in\mu_\C^{-1}(\tau)$ and let $(\dot\bx,\dot\by)$ be a unitary deformation.  Set $(\,\delbar_E,\mathcal F,\varphi)=\mathcal T(\bx,\by)$ and $(\dot\nu,\dot\varphi)=\de\mathcal T(\dot\bx,\dot\by)$.  Applying \Cref{prop: Omega on holo deformation} and Stokes' theorem,
	\begin{align}
		\mathcal T^*L_\mathrm{Higgs}\big|_{(\bx,\by)}(\dot x,\dot y)
			&=L_\mathrm{Higgs}\big|_{(\,\delbar,\mathcal F,\varphi,h)}(\dot A,\dot\Phi)
			\notag\\
			&=-\mathbbm i\int_{\P^1}\tr(\,\delbar\dot\nu\wedge\varphi)
			\notag\\
			&=-\mathbbm i\sum_{i=1}^n\lim_{\delta\to0}\oint_{\partial B_\delta(p_i)}\tr(\dot\nu\varphi)
			\notag\\
			&=2\pi\sum_{i=1}^n\lim_{\delta\to0}\tr(\dot\nu(p_i)\varphi_i).
	\end{align}
	The theorem follows from the next lemma and the expression \eqref{eqn: quiver tautological 1 form} for $L_\mathrm{quiv}$.
\end{proof}

\begin{lemma}\label{lemma: local tautological pairing}
	For all $i=1,\dots,n$, we have
	\begin{equation}\label{eqn: taut 1-form residue expression}
		\tr(\dot\nu(p_i)\varphi_i)=\sum_{k=1}^{\ell_i}\tr(\dot x_i^{(k)}y_i^{(k)}).
	\end{equation}
\end{lemma}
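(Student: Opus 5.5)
We expand $\dot\nu(p_i)=\sum_{k=1}^{\ell_i}\dot B_i^{(k)}$ using \eqref{eqn: nu at punctures}, and write $\varphi_i=(x_i^{(\ell_i)}y_i^{(\ell_i)})_0=x_i^{(\ell_i)}y_i^{(\ell_i)}$, since in the strongly parabolic case $\tau=0$ the trace $\tr(x_i^{(\ell_i)}y_i^{(\ell_i)})=\sum_k r_i^{(k)}\tau_i^{(k)}$ from \eqref{eqn: trace part of phi residue} vanishes. It then suffices to prove the term-by-term identity
\[\tr\big(\dot B_i^{(k)}x_i^{(\ell_i)}y_i^{(\ell_i)}\big)=\tr\big(\dot x_i^{(k)}y_i^{(k)}\big)\qquad\text{for each }k=1,\dots,\ell_i,\]
and then sum over $k$.

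For fixed $k$, we substitute $\dot B_i^{(k)}=X_i^{(k+1)}\dot x_i^{(k)}X_i^{(-k)}$ and use cyclicity of the trace to move $X_i^{(k+1)}$ to the right:
\[\tr\big(\dot B_i^{(k)}x_i^{(\ell_i)}y_i^{(\ell_i)}\big)=\tr\big(\dot x_i^{(k)}\,X_i^{(-k)}x_i^{(\ell_i)}y_i^{(\ell_i)}X_i^{(k+1)}\big).\]
Next we apply the commutation rule \Cref{lemma: xy commutation rule} with $k_0=k+1$ and $\tau=0$. This gives $y_i^{(\ell_i)}X_i^{(k+1)}=x_i^{(\ell_i-1)}\cdots x_i^{(k)}y_i^{(k)}$, and after prepending $x_i^{(\ell_i)}$ we get $x_i^{(\ell_i)}y_i^{(\ell_i)}X_i^{(k+1)}=X_i^{(k)}y_i^{(k)}$. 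For $k=\ell_i$ we instead take $X_i^{(\ell_i+1)}=\Id$, and the identity $x_i^{(\ell_i)}y_i^{(\ell_i)}=X_i^{(\ell_i)}y_i^{(\ell_i)}$ is trivial. Since $X_i^{(-k)}X_i^{(k)}=\Id$ (the left inverse exists because the $x$'s are injective by \Cref{lemma: the x are injective}), the expression collapses to $\tr(\dot x_i^{(k)}y_i^{(k)})$, as required.

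The only delicate step is the index bookkeeping in the commutation rule. One must check that the chain $x_i^{(\ell_i-1)}\cdots x_i^{(k)}$ produced by \Cref{lemma: xy commutation rule}, together with the leading $x_i^{(\ell_i)}$, reassembles exactly into $X_i^{(k)}$, so that the pseudo-inverse $X_i^{(-k)}$ cancels it. This cancellation uses only that $X_i^{(-k)}$ is a left inverse, not a projection, so the argument does not depend on the particular choice of $\dot\nu$ away from the punctures. Beyond that, the computation is a direct trace manipulation, and summing over $k$ yields \eqref{eqn: taut 1-form residue expression}.
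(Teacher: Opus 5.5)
Your proposal is correct and follows the paper's proof essentially step for step. You expand $\dot\nu(p_i)$ into the $\dot B_i^{(k)}$, cycle $X_i^{(k+1)}$ around the trace, apply the commutation rule with $\tau=0$ to get $x_i^{(\ell_i)}y_i^{(\ell_i)}X_i^{(k+1)}=X_i^{(k)}y_i^{(k)}$, and cancel against the left inverse $X_i^{(-k)}$; you also make explicit two points the paper leaves implicit, namely that $\varphi_i=x_i^{(\ell_i)}y_i^{(\ell_i)}$ is already traceless when $\tau=0$, and the edge case $k=\ell_i$ where $X_i^{(\ell_i+1)}=\Id$.
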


\begin{proof}[Proof of \Cref{lemma: local tautological pairing}]
	We use the cyclic property of trace and \Cref{lemma: xy commutation rule} to get
	\begin{align}
		\tr(\dot B_i^{(k)}x_i^{(\ell_i)}y_i^{(\ell_i)})
			&=\tr(X_i^{(k+1)}\dot x_i^{(k)}X_i^{(-k)}x_i^{(\ell_i)}y_i^{(\ell_i)})
			\notag\\
			&=\tr(\dot x_i^{(k)}X_i^{(-k)}x_i^{(\ell_i)}y_i^{(\ell_i)}X_i^{(k+1)})
			\notag\\
			&=\tr(\dot x_i^{(k)}X_i^{(-k)}x_i^{(\ell_i)}x_i^{(\ell_i-1)\dots(k)}y_i^{(k)})
				\notag\\
			&=\tr(\dot x_i^{(k)}y_i^{(k)})
	\end{align}
	See \Cref{fig: taut 1 form calculation} for a diagramatic calculation---diagrams of this kind were instrumental in the exploration that lead to our proofs in this section.
\end{proof}

\begin{figure}[h]
	\begin{align*}
		\begin{tikzpicture}[baseline=(current bounding box.center), scale=0.5, >=stealth, thick]
			\coordinate (L0) at (5, 2);
			\coordinate (L1) at (4, 1);
			\coordinate (L2) at (3, 2);
			\coordinate (L3) at (2, 1);
			\coordinate (L4) at (1, 0);
			\coordinate (L5) at (0, -1);
			\coordinate (L6) at (-1, 0);
			\coordinate (L7) at (-2, 1);
			\coordinate (L8) at (-3, 2);
			\draw[->] (L0) -- (L1) node[midway, above = 8pt, xshift = 6pt, font=\scriptsize] {$y_i^{(\ell_i)}$};
			\draw[->] (L1) -- (L2) node[midway, above = 8pt, xshift = 2pt, font=\scriptsize] {$x_i^{(\ell_i)}$};
			\draw[->] (L2) -- (L5) node[midway, below right, xshift = -6pt, font=\scriptsize] {$X_i^{(-k)}$};
			\draw[->] (L5) -- (L6) node[midway, above = 4pt, xshift = 6pt, font=\scriptsize] {$\dot x_i^{(k)}$};
			\draw[->] (L6) -- (L8) node[midway, above right, xshift = -3pt, font=\scriptsize] {$X_i^{(k+1)}$};
			\draw[->, dashed, rounded corners=15pt] (L8) -- (-3, -2.5) -- node[above, font=\scriptsize] {$\tr$} (5, -2.5) -- (L0);
			\foreach \p in {0,1,2,5,6,7,8} {
				\fill (L\p) circle (1.5pt);
			}
		\end{tikzpicture}
		&\ =\ 
			\begin{tikzpicture}[baseline=(current bounding box.center), scale=0.5, >=stealth, thick]
			\coordinate (M0) at (5, 0);
			\coordinate (M1) at (4, 1);
			\coordinate (M1b) at (4,-1);
			\coordinate (M2) at (3, 2);
			\coordinate (M2b) at (3, 0);
			\coordinate (M3) at (2, 1);
			\coordinate (M4) at (1, 2);
			\coordinate (M5) at (0, 1);
			\coordinate (M6) at (-1, 0);
			\coordinate (M7) at (-2, -1);
			\coordinate (M8) at (-3, -0);
			\draw[->] (M0) -- (M2) node[midway, above = 4pt, xshift=8pt, font=\scriptsize] {$X_i^{(k+1)}$};
			\draw[->] (M2) -- (M3) node[midway, above = 8pt, xshift = 6pt, font=\scriptsize] {$y_i^{(\ell_i)}$};
			\draw[-, dotted] (M0) -- (M1b);
			\draw[-, dotted] (M1b) -- (M2b);
			\node at (4, 0) {$\Downarrow$};
			\draw[-, dotted] (M1) -- (M2b);
			\draw[-, dotted] (M2b) -- (M3);
			\node at (3, 1) {$\Downarrow$};
			\draw[->] (M3) -- (M4) node[midway, above = 8pt, xshift = 2pt, font=\scriptsize] {$x_i^{(\ell_i)}$};
			\draw[->] (M4) -- (M7) node[midway, below right, xshift = -6pt, font=\scriptsize] {$X_i^{(-k)}$};
			\draw[->] (M7) -- (M8) node[midway, above = 4pt, xshift=6pt, font=\scriptsize] {$\dot x_i^{(k)}$};
			\foreach \p in {0,2,3,4,7,8} {
				\fill (M\p) circle (1.5pt);
			}
			\fill (M1b) circle (1.5pt);
			\fill (M2b) circle (1.5pt);
		\end{tikzpicture}\\
		&\ =\ 
			\begin{tikzpicture}[baseline=(current bounding box.center), scale=0.5, >=stealth, thick]
			\coordinate (R0) at (5, 0);
			\coordinate (R1) at (4, -1);
			\coordinate (R2) at (3, 0);
			\coordinate (R3) at (2, 1);
			\coordinate (R4) at (1, 2);
			\coordinate (R5) at (0, 1);
			\coordinate (R6) at (-1, 0);
			\coordinate (R7) at (-2, -1);
			\coordinate (R8) at (-3, 0);
			\draw[->] (R0) -- (R1) node[midway, above = 1pt, xshift = -1pt, font=\scriptsize] {$y_i^{(k)}$};
			\draw[->] (R1) -- (R4) node[midway, above = 4pt, xshift = 5pt, font=\scriptsize] {$X_i^{(k)}$};
			\draw[->] (R4) -- (R7) node[midway, below right, xshift=-6pt, font=\scriptsize] {$X_i^{(-k)}$};
			\draw[->] (R7) -- (R8) node[midway, above = 4pt, xshift=6pt, font=\scriptsize] {$\dot x_i^{(k)}$};
			\foreach \p in {0,1,4,7,8} {
				\fill (R\p) circle (1.5pt);
			}
		\end{tikzpicture}\\[10pt]
		&\ =\ 
		\begin{tikzpicture}[baseline=(current bounding box.center), scale=0.5, >=stealth, thick]
			\coordinate (R0) at (-1, 0);
			\coordinate (R1) at (-2, -1);
			\coordinate (R2) at (-3, 0);
			\draw[->] (R0) -- (R1) node[midway, above = 8pt, xshift = 6pt, font=\scriptsize] {$y_i^{(k)}$};
			\draw[->] (R1) -- (R2) node[midway, above = 8pt, xshift = 2pt, font=\scriptsize] {$\dot x_i^{(k)}$};
			\foreach \p in {0,1,2} {
				\fill (R\p) circle (1.5pt);
			}
		\end{tikzpicture}
	\end{align*}
	\caption{Diagramatic calculation of $\tr(\dot\nu(p_i)\varphi_i)$.  Equalities hold under the trace operation.}
	\label{fig: taut 1 form calculation}
\end{figure}

\subsection{Weakly Parabolic Case}\label{subsec: weakly parabolic case}
Now we prove \Cref{thm: holo symplectic forms agree} in the general (weakly parabolic) setting, where the tautological 1-form is unavailable.

\begin{proof}[Proof of \Cref{thm: holo symplectic forms agree}]
	Let $(\bx,\by)\in\mu_\C^{-1}(\tau)\cap\mu_\R^{-1}(\beta)$, and let $(\dot\bx_1,\dot\by_1)$ and $(\dot\bx_2,\dot\by_2)$ be unitary deformations of $(\bx,\by)$.  Set $(\,\delbar_E,\mathcal F,\varphi)=\mathcal T(\bx,\by)$ and consider the holomorphic deformations $(\dot\nu_j,\dot\varphi_j)$ representing $\de\mathcal T(\dot\bx_j,\dot\by_j)$ described in \Cref{prop: nu induces flag change} for $j=1,2$.  Recall from \eqref{eqn: quiver holo symp form}
		\[\Omega_\mathrm{quiv}\big|_{(\bx,\by)}((\dot\bx_1,\dot\by_1),(\dot\bx_2,\dot\by_2))=\tr(\dot\bx_1\dot\by_2-\dot\bx_2\dot\by_1)=\sum_{i,k}\tr\left(\dot x_{1,i}^{(k)}\dot y_{2,i}^{(k)}-\dot x_{2,i}^{(k)}\dot y_{1,i}^{(k)}\right).\]
	
	On the other hand, successively applying \Cref{prop: Omega on holo deformation}, the holomorphicity of $\varphi$ and $\dot\varphi_j$, the cyclic property of trace, and Stokes' theorem,
	\begin{align}
		\Omega_\mathrm{Higgs}
			&=\Omega_\mathrm{Higgs}((\dot\nu_1,\dot\phi_1),(\dot\nu_2,\dot\phi_2))
				\notag\\
			&=-\mathbbm i\int\tr\left(\,\delbar\dot\nu_1\wedge\dot(\dot\varphi_2+[\varphi,\dot\nu_2])-\delbar\dot\nu_2\wedge\dot(\varphi_1+[\varphi,\dot\nu_1])\right)
				\notag\\
			&=-\mathbbm i\int\tr\left(\,\delbar\dot\nu_1\wedge\dot\varphi_2-\delbar\dot\nu_2\wedge\dot\varphi_1+\delbar\dot\nu_1\wedge[\varphi,\dot\nu_2]-\delbar\dot\nu_2\wedge[\varphi,\dot\nu_1]\right)
				\notag\\
			&=-\mathbbm i\int\tr\left(\,\delbar\dot\nu_1\wedge\dot\varphi_2-\delbar\dot\nu_2\wedge\dot\varphi_1+\delbar\dot\nu_1\wedge\varphi\dot\nu_2-\delbar\dot\nu_1\wedge\dot\nu_2\varphi-\delbar\dot\nu_2\wedge\varphi\dot\nu_1+\delbar\dot\nu_2\wedge\dot\nu_1\varphi\right)
				\notag\\
			&=-\mathbbm i\int\tr\left(\,\delbar(\dot\nu_1\dot\varphi_2-\dot\nu_2\dot\varphi_1)+\dot\nu_2\delbar\dot\nu_1\wedge\varphi-\delbar\dot\nu_1\wedge\dot\nu_2\varphi-\dot\nu_1\delbar\dot\nu_2\wedge\varphi+\delbar\dot\nu_2\wedge\dot\nu_1\varphi\right)
				\notag\\
			&=-\mathbbm i\int\tr\left(\,\delbar(\dot\nu_1\dot\varphi_2-\dot\nu_2\dot\varphi_1)+\delbar(\dot\nu_2\dot\nu_1-\dot\nu_1\dot\nu_2)\wedge\varphi\right)
				\notag\\
			&=2\pi\sum_{i=1}^n\tr\left(\dot\nu_1(p_i)\dot\varphi_{2,i}-\dot\nu_2(p_i)\dot\varphi_{1,i}+[\dot\nu_2(p_i),\dot\nu_1(p_i)]\varphi_i\right)
				\label{eqn: Stokes on holo symplectic form}
	\end{align}
	where $\dot\varphi_{j,i}=\mathrm{res}_{p_i}\dot\varphi_j$.  The theorem now follows from the next lemma.\end{proof}

\begin{lemma}\label{lemma: Omega at single puncture}
	For all $i=1,\dots,n$, we have
	\begin{equation}
		T_i:=\tr\left(\dot\nu_1(p_i)\dot\varphi_{2,i}-\dot\nu_2(p_i)\dot\varphi_{1,i}+[\dot\nu_2(p_i),\dot\nu_1(p_i)]\varphi_i\right)=\sum_{k=1}^{\ell_i}\Omega_i^{(k)}
	\end{equation}
	where $\Omega_i^{(k)}=\tr(\dot x_{1,i}^{(k)}\dot y_{2,i}^{(k)}-\dot x_{2,i}^{(k)}\dot y_{1,i}^{(k)})$.
\end{lemma}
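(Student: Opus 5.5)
Fix $i$ and drop the subscript $i$ throughout, writing $\dot B_j^{(k)}=X^{(k+1)}\dot x_j^{(k)}X^{(-k)}$ so that $\dot\nu_j(p_i)=\sum_k\dot B_j^{(k)}$ by \Cref{prop: nu induces flag change}. The plan is to expand $T_i$ into a double sum over flag levels and evaluate each piece by pushing $y^{(\ell)}$ and $\dot y^{(\ell)}$ down the branch. For pushing $y^{(\ell)}$ we use the commutation rule \Cref{lemma: xy commutation rule}; for pushing $\dot y^{(\ell)}$ we use its linearization obtained by iterating \eqref{eqn: de mu equation}. This mirrors the proof of \Cref{lemma: local tautological pairing}. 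Using $\dot\varphi_{j,i}=\dot x_j^{(\ell)}y^{(\ell)}+x^{(\ell)}\dot y_j^{(\ell)}$ from \eqref{eqn: dot phi residue}, we write
\[T_i=\sum_k\tr\big(\dot B_1^{(k)}x^{(\ell)}\dot y_2^{(\ell)}-\dot B_2^{(k)}x^{(\ell)}\dot y_1^{(\ell)}\big)+\sum_k\tr\big(\dot B_1^{(k)}\dot x_2^{(\ell)}y^{(\ell)}-\dot B_2^{(k)}\dot x_1^{(\ell)}y^{(\ell)}\big)+\sum_{j,k}\tr\big([\dot B_2^{(j)},\dot B_1^{(k)}]\varphi_i\big).\]
The scalar part of $\varphi_i=x^{(\ell)}y^{(\ell)}-\tfrac1{r_\star}\tr(\cdot)$ drops out of the last sum, since the trace of a commutator vanishes, so we may replace $\varphi_i$ by $x^{(\ell)}y^{(\ell)}$ there.

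\textbf{Step 1 (the $\dot y$ terms).} By cyclicity, $\tr(\dot B_1^{(k)}x^{(\ell)}\dot y_2^{(\ell)})=\tr\big(\dot x_1^{(k)}X^{(-k)}x^{(\ell)}\dot y_2^{(\ell)}X^{(k+1)}\big)$. We then expand $\dot y_2^{(\ell)}X^{(k+1)}$ by iterating \eqref{eqn: de mu equation} downward. This produces $X^{(k)}\dot y_2^{(k)}$, a family of terms of the form $(\text{products of }x)\,\dot x_2^{(m)}\,(\ldots)\,y^{(m')}$, and $\dot x_2$–$y$ cross terms. Since $X^{(-k)}X^{(k)}=\Id$, the leading term gives exactly $\tr(\dot x_1^{(k)}\dot y_2^{(k)})$. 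Antisymmetrizing yields $\sum_k\Omega_i^{(k)}$ plus a remainder $R$ built only from $\dot x_1,\dot x_2,\bx,\by$ and the left inverses $X^{(-m)}$.

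\textbf{Step 2 (cancellation).} It remains to show that $R$ cancels against the second and third sums. For the second sum we use the key property \eqref{eqn: B defining property}, $\dot B^{(k)}X^{(k)}=X^{(k+1)}\dot x^{(k)}$. For the double commutator sum we split into the cases $j<k$, $j>k$ and $j=k$; in each case we commute $y^{(\ell)}$ through $X^{(\max(j,k)+1)}$ via \Cref{lemma: xy commutation rule}. The $\tau$-multiples that appear there should cancel pairwise between the two orderings in the commutator. I expect this bookkeeping to be the main obstacle. The cleanest route is to prove, by downward induction on the level as in \Cref{claim: Phi dot X induction}, an identity for
\[\tr\big(\dot B_1^{(k)}\dot\Phi_{2,i}\big)\quad\text{and}\quad \dot\Phi_{j,i}=\dot\varphi_{j,i}+[\varphi_i,\dot\nu_j(p_i)].\]
This route is attractive because it lets us reuse the computation of $\dot\Phi_iX^{(k_0)}$ from the proof of \Cref{prop: nu induces flag change}.

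\textbf{Alternative organization.} Observe that $T_i=\tr(\dot\nu_1\dot\Phi_2-\dot\nu_2\dot\Phi_1)+\tr([\dot\nu_1,\dot\nu_2]\varphi_i)$ after rearranging, so that $T_i=\tr(\dot\nu_1\dot\Phi_{2,i})-\tr(\dot\nu_2\dot\varphi_{1,i})$. We then evaluate $\tr(\dot B_1^{(k)}\dot\Phi_{2,i})=\tr\big(\dot x_1^{(k)}X^{(-k)}\dot\Phi_{2,i}X^{(k+1)}\big)$ using the explicit formula for $\dot\Phi_{2,i}X^{(k+1)}$ obtained at the end of that proof. The leading term $X^{(k)}\dot y_2^{(k)}$ supplies $\tr(\dot x_1^{(k)}\dot y_2^{(k)})$. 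The other terms are of the form $X^{(k)}y^{(k)}(\cdots)\dot x_2^{(m)}X^{(-m)}X^{(k+1)}$ or $\dot B_2^{(m)}X^{(k)}y^{(k)}$ with $m<k$. Their traces against $\dot x_1^{(k)}X^{(-k)}$ should combine with $-\tr(\dot\nu_2\dot\varphi_{1,i})$, expanded similarly, to give an expression antisymmetric under $1\leftrightarrow2$. Once the total is known to be antisymmetric, these leftover terms must cancel, leaving $\sum_k\Omega_i^{(k)}$.

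If the $X^{(-m)}$ factors obstruct cancellation, I would use the gauge freedom to normalize the $x^{(k)}$ to be standard inclusions, exactly as in \Cref{ex: 4 - 2 - 1 example}. This is justified because both sides of the lemma are invariant under $G_\C$ and under changing $\dot\nu$ away from the punctures. With this normalization the $X^{(-k)}$ become coordinate projections, and the bookkeeping reduces to block-matrix manipulations.
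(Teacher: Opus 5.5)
You have the setup right. The expansion of $T_i$, dropping the scalar part of $\varphi_i$ from the commutator term, extracting $\tr(\dot x_1^{(k)}\dot y_2^{(k)})$ via $X^{(-k)}X^{(k)}=\Id$, and the rewriting $T_i=\tr(\dot\nu_1\dot\Phi_{2,i})-\tr(\dot\nu_2\dot\varphi_{1,i})$ are all correct.

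\textbf{The gap.} The proposal never proves what the lemma actually asserts, namely that everything other than $\sum_k\Omega_i^{(k)}$ cancels. Step 2 only predicts that it should. The alternative organization closes with a non sequitur: $T_i$ and $\sum_k\Omega_i^{(k)}$ are both antisymmetric under $1\leftrightarrow2$ by construction, so their difference is automatically antisymmetric. That says nothing about whether it vanishes, since a nonzero antisymmetric bilinear form in $(\dot\bx_1,\dot\bx_2)$ is perfectly possible a priori.

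There is also a concrete trap in the route you favor. In the final display of the proof of \Cref{prop: nu induces flag change}, the minus sign on the terms $\dot B^{(j)}X^{(k_0-1)}y^{(k_0-1)}$ is dropped in the last two lines. This is harmless for the image containment proved there, but it must be a minus: that is what lets the $j=k_0-1$ term cancel $X^{(k_0)}\dot x^{(k_0-1)}y^{(k_0-1)}$ and the sum stop at $k_0-2$. With the printed sign, the terms you want to cancel would instead survive with coefficient $2$. Normalizing the $x^{(k)}$ to standard inclusions only changes notation and does not help with any of this.

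\textbf{How the alternative route closes.} It can be completed, and it is shorter than the paper's argument. With the corrected sign, and $X^{(\ell+1)}=\Id$, one has for $1\le k\le\ell$
\[
X^{(-k)}\dot\Phi_{2,i}X^{(k+1)}=\dot y_2^{(k)}+\sum_{j=1}^{k}y^{(k)}x^{(k)\dots(j+1)}\dot x_2^{(j)}X^{(-j)}X^{(k+1)}-\sum_{j=1}^{k-1}x^{(k-1)\dots(j+1)}\dot x_2^{(j)}X^{(-j)}X^{(k)}y^{(k)}.
\]
The case $k=\ell$ comes directly from \eqref{eqn: Phi dot expansion}. Put $A_{k,j}=\tr(\dot x_1^{(k)}y^{(k)}x^{(k)\dots(j+1)}\dot x_2^{(j)}X^{(-j)}X^{(k+1)})$ for $j\le k$, and $C_{k,j}=\tr(\dot x_1^{(k)}x^{(k-1)\dots(j+1)}\dot x_2^{(j)}X^{(-j)}X^{(k)}y^{(k)})$ for $j<k$. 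Then $\tr(\dot\nu_1\dot\Phi_{2,i})=\sum_k\tr(\dot x_1^{(k)}\dot y_2^{(k)})+\sum_{j\le k}A_{k,j}-\sum_{j<k}C_{k,j}$.

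On the other side, iterating \eqref{eqn: de mu equation} gives $x^{(\ell)}\dot y_1^{(\ell)}X^{(j+1)}=X^{(j)}\dot y_1^{(j)}+\sum_{m=j+1}^{\ell}X^{(m)}\bigl(\dot x_1^{(m-1)}y^{(m-1)}-y^{(m)}\dot x_1^{(m)}\bigr)x^{(m-1)\dots(j+1)}$. After cyclic permutations this yields $\tr(\dot\nu_2\dot\varphi_{1,i})=\sum_j\tr(\dot x_2^{(j)}\dot y_1^{(j)})+\sum_{j\le k}A_{k,j}-\sum_{j<k}C_{k,j}$, where the $A_{\ell,j}$ come from $\tr(\dot B_2^{(j)}\dot x_1^{(\ell)}y^{(\ell)})$. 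The two leftovers are therefore \emph{identical}, not merely antisymmetric, and subtracting gives $\sum_k\Omega_i^{(k)}$.

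\textbf{Comparison with the paper.} The paper does your Step 1/Step 2 bookkeeping head-on, by a downward induction on remainders $R_m^{(k_0)}$ and $S_{k,m}^{(k_0)}$. There, $\tau^{(k_0)}$-terms appear at every level and cancel only after summing over an index set symmetric in $(k,m)$. The route above buries every use of R3, and hence every $\tau$-term, inside the formula for $\dot\Phi_iX^{(k_0)}$, and then finishes by cyclicity of trace alone. As submitted, though, your proposal contains neither this matching nor the paper's induction, so the lemma is not yet proved.
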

\begin{proof}
	We will fix the index $i$ for the following calculation, so we shall drop this subscripts on the $x$'s, $y$'s, and $\ell$'s, writing $\varphi_i=x^\ell y^\ell$ and $\dot\varphi_{j,i}=\dot x_j^\ell y^\ell+x^\ell\dot y_j^\ell$.  We define $\dot B_j^{(k)}$ as in \eqref{eqn: def of B}, where the index $j$ now distinguishes the two deformations rather than the points $p_i\in D$.

	The following calculation is long and tedious.  We suggest carefully reading the proof of \Cref{prop: nu induces flag change} and recalling the three relations R1, R2, R3 before reading this proof.

	Recall $\dot\nu_j(p_i)=\sum_{k=1}^{\ell}\dot B_j^{(k)}$ and $\dot\varphi_i=\dot x_i^{(\ell_i)}y_i^{(\ell_i)}+x_i^{(\ell_i)}\dot y_i^{(\ell_i)}$ from \eqref{eqn: dot phi residue}.  Since $\dot B_j^{(\ell)}x^{(\ell)}=\dot x_j^{(\ell)}$, 
	{\allowdisplaybreaks
	\begin{align}
		T_i
			&=\tr\Bigg(\sum_{k=1}^{\ell}\dot B_1^{(k)}x^{(\ell)}\dot y_2^{(\ell)}-\dot B_2^{(k)}x^{(\ell)}\dot y_1^{(\ell)}
					+\sum_{k=1}^{\ell}\dot B_1^{(k)}\dot x_2^{(\ell)}y^{(\ell)}-\dot B_2^{(k)}\dot x_1^{(\ell)}y^{(\ell)}
					\notag\\
				&\qquad\qquad+\sum_{k,m=1}^{\ell}\dot B_2^{(k)}\dot B_1^{(m)}x^{(\ell)}y^{(\ell)}-\dot B_1^{(k)}\dot B_2^{(m)}x^{(\ell)}y^{(\ell)}\Bigg)
					\notag\\
			&=\tr\Bigg(\Omega_i^{(\ell_i)}
					+\sum_{k=1}^{\ell-1}\dot B_1^{(k)}x^{(\ell)}\dot y_2^{(\ell)}-\dot B_2^{(k)}x^{(\ell)}\dot y_1^{(\ell)}
					\notag\\
				&\qquad\qquad+\sum_{k=1}^{\ell}\sum_{m=1}^{\ell-1}\dot B_2^{(k)}\dot B_1^{(m)}x^{(\ell)}y^{(\ell)}-\dot B_1^{(k)}\dot B_2^{(m)}x^{(\ell)}y^{(\ell)}\Bigg).
					\label{eqn: T_i first expansion}
	\end{align}
	}
	Like in the proof of \Cref{prop: nu induces flag change}, our strategy will be to iteratively apply the relations R1 (linearized moment map equation \eqref{eqn: de mu equation}), R2 (key property of $B_i^{(k)}$ \eqref{eqn: B defining property}), and R3 (moment map equation \eqref{eqn: mu equation restated}) to pull the subsequent terms $\Omega_i^{(k)}$ out of the sum.  This time, however, we also need to reformulate the remaining summands at each step.
	\begin{claim}\label{claim: T_i induction expression}
		For all $k_0=1,\dots,\ell$,
		\begin{align}
			T_i&=\sum_{k=k_0}^\ell\Omega_i^{(k)}
					\notag\\
				&+\sum_{m=1}^{k_0-1}\underbrace{\tr\left(X^{(-(k_0+1))}\dot B_1^{(m)}X^{(k_0)}\dot y_2^{(k_0)}-X^{(-(k_0+1))}\dot B_2^{(m)}X^{(k_0)}\dot y_1^{(k_0)}\right)}_{R_m^{(k_0)}}
					\notag\\
				&+\sum_{k=1}^{k_0}\sum_{m=1}^{k_0-1}\underbrace{\tr\left(X^{(-(k_0+1))}\dot B_2^{(k)}\dot B_1^{(m)}X^{(k_0)}y^{(k_0)}-X^{-(k_0+1)}\dot B_1^{(k)}\dot B_2^{(m)}X^{(k_0)}y^{(k_0)}\right)}_{S_{k,m}^{(k_0)}}
					\label{eqn: T induction formula}
		\end{align}
		where $X^{-(k_0+1)}$ is omitted when $k_0=\ell$.
	\end{claim}
	\begin{proof}[Proof of \Cref{claim: T_i induction expression}]
		The $k_0=\ell$ case is \eqref{eqn: T_i first expansion}.  Assuming the claim holds for $k_0\in\{2,\dots,\ell\}$, we will show it also holds for $k_0-1$.  We denote the summands $R_m^{(k_0)}$ and $S_{k,m}^{(k_0)}$ as shown above.  Note that $X^{(-(k_0+1))}\dot B_j^{(k_0)}=\dot x_j^{(k_0)}X^{(-k_0)}$.  Thus
		\begin{align}
			S_{k_0,m}^{(k_0)}
				&=\tr\left(\dot x_2^{(k_0)}X^{(-k_0)}\dot B_1^{(m)}X^{(k_0)}y^{(k_0)}-\dot x_1^{(k_0)}X^{(-k_0)}\dot B_2^{(m)}X^{(k_0)}y^{(k_0)}\right)
						\notag\\
				&=\tr\left(X^{(-k_0)}\dot B_1^{(m)}X^{(k_0)}y^{(k_0)}\dot x_2^{(k_0)}-X^{(-k_0)}\dot B_2^{(m)}X^{(k_0)}y^{(k_0)}\dot x_1^{(k_0)}\right)
		\end{align}
		Meanwhile, since
		\begin{equation}\label{eqn: peel out x^k_0}
			X^{(-(k_0+1))}\dot B_j^{(m)}=x^{(k_0)\dots(m+1)}\dot x_j^{(m)}X^{(-m)}=x^{(k_0)}X^{(-(k_0))}\dot B_j^{(m)}
		\end{equation}
		for $m<k_0$, we have
		\begin{align}
			R_m^{(k_0)}
				&=\tr\left(X^{(-(k_0+1))}\dot B_1^{(m)}X^{(k_0)}\dot y_2^{(k_0)}-X^{(-(k_0+1))}\dot B_2^{(m)}X^{(k_0)}\dot y_1^{(k_0)}\right)
						\notag\\
				&=\tr\Big(X^{(-k_0)}\dot B_1^{(m)}X^{(k_0)}\dot y_2^{(k_0)}x^{(k_0)}
					-X^{(-k_0)}\dot B_2^{(m)}X^{(k_0)}\dot y_1^{(k_0)}x^{(k_0)}\Big).
		\end{align}
		Applying the relation R1 to the sum $S_{k_0,m}^{(k_0)}+R_m^{(k_0)}$ and regrouping terms,
		\begin{align}
			S_{k_0,m}^{(k_0)}+R_m^{(k_0)}
				&=\tr\Big(X^{(-k_0)}\dot B_1^{(m)}X^{(k_0)}\overbracket{\left(x^{(k_0-1)}\dot y_2^{(k_0-1)}+\dot x_2^{(k_0-1)}y^{(k_0-1)}\right)}^{R1}
						\notag\\
					&\qquad\qquad-X^{(-k_0)}\dot B_2^{(m)}X^{(k_0)}\overbracket{\left(x^{(k_0-1)}\dot y_1^{(k_0-1)}+\dot x_1^{(k_0-1)}y^{(k_0-1)}\right)}^{R1}\Big)
						\notag\\
				&=\tr\Big(X^{(-k_0)}\dot B_1^{(m)}X^{(k_0-1)}\dot y_2^{(k_0-1)}
						-X^{(-k_0)}\dot B_2^{(m)}X^{(k_0-1)}\dot y_1^{(k_0-1)}
						\notag\\
					&\qquad\qquad+X^{(-k_0)}\dot B_1^{(m)}X^{(k_0)}\dot x_2^{(k_0-1)}y^{(k_0-1)}
						-X^{(-k_0)}\dot B_2^{(m)}X^{(k_0)}\dot x_1^{(k_0-1)}y^{(k_0-1)}\Big).
						\label{eqn: S + R}
		\end{align}
		When $m=k_0-1$,
		\begin{equation}
			X^{(-k_0)}\dot B_j^{(m)}X^{(k_0-1)}=X^{(-k_0)}X^{(k_0)}\dot x^{(k_0-1)}X^{(-(k_0-1))}X^{(k_0-1)}=\dot x^{(k_0-1)}.
		\end{equation}
		Therefore the first two terms collapse to
			\[\tr(\dot x_1^{(k_0-1)}\dot y_2^{(k_0-2)}-\dot x_2^{(k_0-1)}\dot y_1^{(k_0-1)})=\Omega_i^{(k_0-1)},\]
		and the second two terms cancel with $S_{k_0-1,k_0-1}^{(k_0)}$; for using \eqref{eqn: peel out x^k_0}, the cyclic property of trace, and the relation R3,
		{\allowdisplaybreaks
		\begin{align}
			S_{k_0-1,k_0-1}^{(k_0)}
				&=\tr\Big(X^{(-(k_0+1))}\dot B_2^{(k_0-1)}\dot B_1^{(k_0-1)}X^{(k_0)}y^{(k_0)}
						\notag\\
					&\qquad\qquad-X^{(-(k_0+1))}\dot B_1^{(k_0-1)}\dot B_2^{(k_0-1)}X^{(k_0)}y^{(k_0)}\Big)
						\notag\\
				&=\tr\Big(X^{(-k_0)}\dot B_2^{(k_0-1)}\dot B_1^{(k_0-1)}X^{(k_0)}\underbrace{y^{(k_0)}x^{(k_0)}}_{R3}
						\notag\\
					&\qquad\qquad-X^{(-k_0)}\dot B_1^{(k_0-1)}\dot B_2^{(k_0-1)}X^{(k_0)}\underbrace{y^{(k_0)}x^{(k_0)}}_{R3}\Big)
						\notag\\
				&=\tr\Big(X^{(-k_0)}\dot B_2^{(k_0-1)}\underbrace{\dot B_1^{(k_0-1)}\hspace{7ex}}_{R2}\hspace{-7ex}\overbracket{X^{(k_0-1)}y^{(k_0-1)}}^{R3}
						\notag\\
					&\qquad\qquad-X^{(-k_0)}\dot B_1^{(k_0-1)}\underbrace{\dot B_2^{(k_0-1)}\hspace{7ex}}_{R2}\hspace{-7ex}\overbracket{X^{(k_0-1)}y^{(k_0-1)}}^{R3}\Big)
						\notag\\
					&\qquad\qquad+\overbracket{\tau^{(k_0)}}^{R3}\left(X^{(-k_0)}\dot B_2^{(k_0-1)}\dot B_1^{(k_0-1)}X^{(k_0)}-X^{(-k_0)}\dot B_1^{(k_0-1)}\dot B_2^{(k_0-1)}X^{(k_0)}\right)
						\notag\\
				&=\tr\Big(X^{(-k_0)}\dot B_2^{(k_0-1)}\overbracket{X^{(k_0)}\dot x_1^{(k_0-1)}}^{R2}y^{(k_0-1)}
						\notag\\
					&\qquad\qquad-X^{(-k_0)}\dot B_1^{(k_0-1)}\overbracket{X^{(k_0)}\dot x_2^{(k_0-1)}}^{R2}y^{(k_0-1)}\Big),
		\end{align}
		}
		where the term involving $\tau^{(k_0)}$ disappeared because
		\begin{align}
			\tr\left(X^{(-k_0)}\dot B_2^{(k_0-1)}\dot B_1^{(k_0-1)}X^{(k_0)}\right)
				&=\tr\left(X^{(k_0)}X^{(-k_0)}\dot B_2^{(k_0-1)}\dot B_1^{(k_0-1)}\right)
					\notag\\
				&=\tr\left(X^{(k_0)}\dot x_2^{(k_0-1)}X^{(-(k_0-1))}\dot B_1^{(k_0-1)}\right)
					\notag\\
				&=\tr\left(\dot B_1^{(k_0-1)}\dot B_2^{(k_0-1)}\right)
					\label{eqn: trace B1 B2}
		\end{align}
		and similarly
			\[\tr\left(X^{(-k_0)}\dot B_1^{(k_0-1)}\dot B_2^{(k_0-1)}X^{(k_0)}\right)=\tr\left(\dot B_1^{(k_0-1)}\dot B_2^{(k_0-1)}\right),\]
		so the terms cancel.

		To recap, we have shown
		\begin{equation}\label{eqn: R + S + S = Omega}
			R_{k_0-1}^{(k_0)}+S_{k_0,k_0-1}^{(k_0)}+S_{k_0-1,k_0-1}^{(k_0)}=\Omega_i^{(k_0-1)}.
		\end{equation}
		The remaining terms in the right hand side of the induction hypothesis \eqref{eqn: T induction formula} are
		\begin{equation}
			\sum_{m=1}^{k_0-2}\left(R_m^{(k_0)}+S_{k_0,m}^{(k_0)}\right)+\sum_{(k,m)\in I}S_{k,m}^{(k_0)}
		\end{equation}
		where $I=\{1,\dots,k_0-1\}^2\sm\{(k_0-1,k_0-1)\}$.  To complete the induction, we must decrease the upper indices to $k_0-1$.  Continuing from \eqref{eqn: S + R} with $m\leq k_0-2$ and referencing the definitions of $R$ and $S$ in \eqref{eqn: T induction formula},
		{\allowdisplaybreaks
		\begin{align}
			R_m^{(k_0)}+S_{k_0,m}^{(k_0)}
				&=\tr\Big(X^{(-k_0)}\dot B_1^{(m)}X^{(k_0-1)}\dot y_2^{(k_0-1)}
						-X^{(-k_0)}\dot B_2^{(m)}X^{(k_0-1)}\dot y_1^{(k_0-1)}
						\notag\\
					&\qquad\qquad+X^{(-k_0)}\dot B_1^{(m)}X^{(k_0)}\dot x_2^{(k_0-1)}y^{(k_0-1)}
					-X^{(-k_0)}\dot B_2^{(m)}X^{(k_0)}\dot x_1^{(k_0-1)}y^{(k_0-1)}\Big)
						\notag\\
				&=R_m^{(k_0-1)}
						\notag\\
					&\qquad+\tr\Big(X^{(-k_0)}\dot B_1^{(m)}\underbrace{X^{(k_0)}\dot x_2^{(k_0-1)}X^{(-(k_0-1))}}_{\eqref{eqn: def of B}}X^{(k_0-1)}y^{(k_0-1)}
						\notag\\
					&\qquad\qquad-X^{(-k_0)}\dot B_2^{(m)}\underbrace{X^{(k_0)}\dot x_1^{(k_0-1)}X^{(-(k_0-1))}}_{\eqref{eqn: def of B}}X^{(k_0-1)}y^{(k_0-1)}\Big)
						\notag\\
				&=R_m^{(k_0-1)}+\tr\Big(X^{(-k_0)}\dot B_1^{(m)}\overbracket{\dot B_2^{(k_0-1)}}^{\eqref{eqn: def of B}}X^{(k_0-1)}y^{(k_0-1)}
						\notag\\
					&\qquad\qquad\qquad\qquad-X^{(-k_0)}\dot B_2^{(m)}\overbracket{\dot B_1^{(k_0-1)}}^{\eqref{eqn: def of B}}X^{(k_0-1)}y^{(k_0-1)}\Big)
						\notag\\
				&=R_m^{(k_0-1)}-S_{m,k_0-1}^{(k_0-1)}
						\label{eqn: R + S reduction}
		\end{align}
		}
		(note $\eqref{eqn: def of B}$ is the definition of $\dot B_i^{(k)}$).  Finally, for all $k\leq k_0-1$ and $m\leq k_0-2$,
		\begin{align}
			S_{k,m}^{(k_0)}
				&=\tr\left(X^{(-(k_0+1))}\dot B_2^{(k)}\dot B_1^{(m)}X^{(k_0)}y^{(k_0)}-X^{(-(k_0+1))}\dot B_1^{(k)}\dot B_2^{(m)}X^{(k_0)}y^{(k_0)}\right)
						\notag\\
				&=\tr\Big(X^{(-k_0)}\dot B_2^{(k)}\dot B_1^{(m)}X^{(k_0)}\underbrace{y^{(k_0)}x^{(k_0)}}_{R3}
					-X^{(-k_0)}\dot B_1^{(k)}\dot B_2^{(m)}X^{(k_0)}\underbrace{y^{(k_0)}x^{(k_0)}}_{R3}\Big)
						\notag\\
				&=\tr\Big(X^{(-k_0)}\dot B_2^{(k)}\dot B_1^{(m)}\overbracket{X^{(k_0-1)}y^{(k_0-1)}}^{R3}
					-X^{(-k_0)}\dot B_1^{(k)}\dot B_2^{(m)}\overbracket{X^{(k_0-1)}y^{(k_0-1)}}^{R3}\Big)
						\notag\\
					&\qquad+\overbracket{\tau^{(k_0)}}^{R3}\tr\Big(X^{(-k_0)}\dot B_2^{(k)}\dot B_1^{(m)}X^{(k_0)}
						-X^{(-k_0)}\dot B_1^{(k)}\dot B_2^{(m)}X^{(k_0)}\Big)
						\notag\\
				&=S_{k,m}^{(k_0-1)}+\tau^{(k_0)}\tr\left(\dot B_2^{(k)}\dot B_1^{(m)}-\dot B_1^{(k)}\dot B_2^{(m)}\right).
						\label{eqn: S reduction}
		\end{align}
		Where the last equality comes from an identical argument as in \eqref{eqn: trace B1 B2}.  Under the summation over all $(k,m)\in I$ the terms involving $\tau^{(k_0)}$ cancel.  All told, the inductive hypothesis can be reduced as follows:
		\begin{align}
			T_i
				&=\sum_{k=k_0}^{\ell}\Omega_i^{(k)}+\sum_{m=1}^{k_0-1}R_m^{(k_0)}+\sum_{k=1}^{k_0}\sum_{m=1}^{k_0-1}S_{k,m}^{(k_0)}
					&(\text{induction hypothesis})
						\notag\\
				&=\sum_{k=k_0-1}^{\ell}\Omega_i^{(k)}+\sum_{m=1}^{k_0-2}\left(R_m^{(k_0)}+S_{k_0,m}^{(k_0)}\right)+\sum_{(k,m)\in I}S_{k,m}^{(k_0)}
					&(\text{by \eqref{eqn: R + S + S = Omega}})
						\notag\\
				&=\sum_{k=k_0-1}^{\ell}\!\Omega_i^{(k)}+\sum_{m=1}^{k_0-2}\left(R_m^{(k_0-1)}-S_{m,k_0-1}^{(k_0-1)}\right)+\sum_{(k,m)\in I}S_{k,m}^{(k_0-1)}
					&(\text{by \eqref{eqn: R + S reduction} and \eqref{eqn: S reduction}})
						\notag\\
				&=\sum_{k=k_0-1}^{\ell}\!\Omega_i^{(k)}+\sum_{m=1}^{k_0-2}R_m^{(k_0-1)}+\sum_{k=1}^{k_0-1}\sum_{m=1}^{k_0-2}S_{k,m}^{(k_0-1)}.
		\end{align}
		This completes the induction step.
	\end{proof}
	Having proved the claim, we apply \eqref{eqn: T induction formula} in the case $k_0=1$ to obtain $T_i=\sum_{k=1}^\ell\Omega_i^{(k)}$.  This proves the lemma.
\end{proof}

\appendix
\section{\texorpdfstring{Formulation of $\Omega_\mathrm{Higgs}$}{Formulation of Omega}}\label[appendix]{subsec: Omega independence}

Recall that
\begin{equation}
	\Omega_\mathrm{Higgs}((\dot A_1,\dot\Phi_1),(\dot A_2,\dot\Phi_2))=-\mathbbm i\int_C\tr\left(\dot A_1\wedge\dot\Phi_2-\dot A_2\wedge\dot\Phi_1\right)
\end{equation}
where $(\dot A_j,\dot\Phi_j)$ are deformations of a stable Higgs bundle $(\delbar_A,\Phi)\in M_{(Q,\vec r,\mathcal F)}(\sigma,\alpha)$ infinitesimally preserving the level sets $\mu_\C=0$ and $\mathrm{Lev}=0$ defining $M_{(Q,\vec r,\mathcal F)}(\sigma)$.  In this appendix we show that it is not necessary to assume $(\dot A_j,\dot\Phi)$ are orthogonal to gauge orbits (i.e. $(\dot A_j,\dot\Phi)$ need not be harmonic representatives of the deformation).  Indeed, defining this would require fixing a hermitian metric on $E$, whereas we would prefer to work purely algebraically.

Note that any tangent vector to the gauge orbit in $M_{(Q,\vec r,\mathcal F)}(\sigma,\alpha)$ is of the form
		\[\dot\gamma\cdot(\delbar_A,\Phi)=\frac{\de}{\de t}\Big|_{t=0}e^{t\dot\gamma}\cdot(\delbar_A,\Phi)=(\dot A_j-\delbar_A\dot\gamma,\dot\Phi+[\dot\gamma,\Phi])\]
	for $\dot\gamma\in\mathrm{Lie}(\mathfrak G_\C)$.
\begin{proposition}
	  Let $\dot\gamma_1,\dot\gamma_2\in\mathrm{Lie}(\mathfrak G_\C)$, and let $(\dot A_j',\dot\Phi_j')=(\dot A_j,\dot \Phi_j)+\dot\gamma_j\cdot(\delbar_A,\Phi)$ for $j=1,2$.  Then $\Omega_\mathrm{Higgs}((\dot A_1',\dot\Phi_1'),(\dot A_2',\dot\Phi_2'))=\Omega_\mathrm{Higgs}((\dot A_1,\dot\Phi_1),(\dot A_2,\dot\Phi_2))$.
\end{proposition}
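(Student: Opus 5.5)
The plan is to expand $\Omega_\mathrm{Higgs}$ bilinearly in the two gauge perturbations and show that every extra term vanishes. Write $(\dot A_j',\dot\Phi_j')=(\dot A_j-\delbar_A\dot\gamma_j,\ \dot\Phi_j+[\dot\gamma_j,\Phi])$. It then suffices, by bilinearity and antisymmetry, to prove two facts:
\begin{enumerate}[label=(\alph*)]
\item $\Omega_\mathrm{Higgs}\big((\dot A,\dot\Phi),\dot\gamma\cdot(\delbar_A,\Phi)\big)=0$ for any tangent vector $(\dot A,\dot\Phi)$ to $M_{(Q,\vec r,\mathcal F)}(\sigma)$ and any $\dot\gamma\in\mathrm{Lie}(\mathfrak G_\C)$;
\item fact (a) applied with $(\dot A,\dot\Phi)=\dot\gamma_1\cdot(\delbar_A,\Phi)$ itself.
\end{enumerate}
Gauge-orbit vectors are tangent to the level set, so (b) is a special case of (a). Hence everything reduces to (a).

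For (a), expand
\[
-\mathbbm i\int_C\tr\left(\dot A\wedge[\dot\gamma,\Phi]+\delbar_A\dot\gamma\wedge\dot\Phi\right).
\]
Integrate the second term by parts: $\tr(\delbar_A\dot\gamma\wedge\dot\Phi)=\delbar\tr(\dot\gamma\dot\Phi)-\tr(\dot\gamma\,\delbar_A\dot\Phi)$, up to the sign conventions for $1$-forms. Next use the linearized equation $\delbar_A\dot\Phi+[\dot A,\Phi]=0$, which is the derivative of $\mu_{\C,\mathrm{Higgs}}=0$, valid away from $D$. This turns $-\tr(\dot\gamma\,\delbar_A\dot\Phi)$ into $\tr(\dot\gamma[\dot A,\Phi])$. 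By cyclicity of the trace, this cancels $\tr(\dot A\wedge[\dot\gamma,\Phi])$, with careful tracking of the wedge signs of the $(0,1)$-form $\dot A$ against the $(1,0)$-form $\Phi$. What remains is the exact term $-\mathbbm i\int_C\delbar\tr(\dot\gamma\dot\Phi)$.

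The main obstacle is the boundary contribution at the punctures, since $\dot\Phi$ has simple poles at the $p_i$. By Stokes' theorem, exactly as in the computations of \Cref{subsec: strongly parabolic case via tautological 1-form}, this term equals a constant multiple of $\sum_i\tr(\dot\gamma(p_i)\res_{p_i}\dot\Phi)$. Two facts kill it:
\begin{itemize}
\item $\dot\gamma(p_i)$ is a parabolic endomorphism, i.e.\ it lies in $\mathfrak p_i$;
\item $\res_{p_i}\dot\Phi$ is strongly parabolic, i.e.\ it lies in $\mathfrak n_i$, because the deformation preserves $\mathrm{Lev}=\sigma$ with the flags held fixed.
\end{itemize}
Since $\tr(\mathfrak p_i\cdot\mathfrak n_i)=0$, each boundary term vanishes.

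One also has to justify the integration by parts with the singular $\Phi$ in the cancellation step. The integrand $\tr(\dot A\wedge[\dot\gamma,\Phi])$ has at worst a $1/|z-p_i|$ singularity, which is integrable. So cutting out discs $B_\delta(p_i)$ and letting $\delta\to0$ only produces the residue terms already handled. (If weighted Sobolev regularity is intended, the same limiting argument applies.) This completes (a), and hence the proposition.
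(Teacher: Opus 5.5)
Your proposal is correct and follows essentially the same route as the paper: bilinear expansion, the product rule combined with the linearized equation $\delbar_A\dot\Phi+[\dot A,\Phi]=0$, Stokes' theorem, and the vanishing of $\tr(\mathfrak p_i\cdot\mathfrak n_i)$ at the punctures. The only difference is packaging. You absorb the $\Omega\big(\dot\gamma_1\cdot(\delbar_A,\Phi),\dot\gamma_2\cdot(\delbar_A,\Phi)\big)$ term into (a) by asserting that gauge directions are tangent to $\mathrm{Lev}^{-1}(\sigma)$, whereas the paper kills the resulting residue $\tr\big([\dot\gamma_1(p_i),\dot\gamma_2(p_i)]\res_{p_i}\Phi\big)$ directly. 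Both rest on the same fact, that $\mathrm{gr}(\res_{p_i}\Phi)\in Z(\mathfrak l_i)$ forces $[\dot\gamma(p_i),\res_{p_i}\Phi]\in\mathfrak n_i$, and you should state this justification explicitly, since it is the only place the central-Levi hypothesis enters your argument.
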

\begin{proof}
	We follow \cite[Remark 6.1]{FMSW26}.  Since the deformations preserve $\mu_\C=0$ (recall $\mu_\C(\delbar_,\Phi)=\delbar_A\Phi$), we have
	\begin{equation}\label{eqn: Higgs de mu C}
		\delbar_A\dot\Phi_j+[\dot A_j,\Phi]=0
	\end{equation}
	for $j=1,2$.  Also, since the deformations preserve $\mathrm{Lev}=0$, the Higgs field deformations $\dot\Phi_j$ are strongly parabolic (i.e.\ Levi part is 0).  For shorthand write $\Omega=\Omega_\mathrm{Higgs}((\dot A_1,\dot\Phi_1),(\dot A_2,\dot\Phi_2))$ and $\Omega'=\Omega_\mathrm{Higgs}((\dot A_1',\dot\Phi_1'),(\dot A_2',\dot\Phi_2'))$.  Expanding terms, using the product rule, and then applying \eqref{eqn: Higgs de mu C},
	{\allowdisplaybreaks
	\begin{align}
		\Omega'
			&=\Omega_\mathrm{Higgs}((\dot A_1-\delbar_A\dot\gamma_1,\dot\Phi_1+[\dot\gamma_1,\Phi]),(\dot A_2-\delbar_A\dot\gamma_2,\dot\Phi_2+[\dot\gamma_2,\Phi]))
				\notag\\
			&=-\mathbbm i\int_C\tr\left((\dot A_1-\delbar_A\dot\gamma_1)\wedge(\dot\Phi_2+[\dot\gamma_2,\Phi])-(\dot A_2-\delbar_A\dot\gamma_2)\wedge(\dot\Phi_1+[\dot\gamma_1,\Phi])\right)
				\notag\\
			&=\Omega-\mathbbm i\int_C\tr\Bigg(\dot A_1\wedge[\dot\gamma_2,\Phi]-\dot A_2\wedge[\dot\gamma_1,\Phi]
					\notag\\
				&\qquad\qquad\qquad-\delbar_A\dot\gamma_1\wedge\dot\Phi_2
				+\delbar_A\dot\gamma_2\wedge\dot\Phi_1
					\notag\\
				&\qquad\qquad\qquad-\delbar_A\dot\gamma_1\wedge[\dot\gamma_2,\Phi]+\delbar_A\dot\gamma_2\wedge[\dot\gamma_1,\Phi]\Bigg)
					\notag\\
			&=\Omega-\mathbbm i\int_C\tr\Bigg(-\dot\gamma_2\wedge[\dot A_1,\Phi]+\dot\gamma_1\wedge[\dot A_2,\Phi]
					\notag\\
				&\qquad\qquad\qquad+\delbar_A\left(-\dot\gamma_1\wedge\dot\Phi_2+\dot\gamma_2\wedge\dot\Phi_1\right) +\dot\gamma_1\delbar_A\dot\Phi_2-\dot\gamma_2\delbar_A\dot\Phi_1
					\notag\\
				&\qquad\qquad\qquad-\delbar_A\dot\gamma_1\wedge[\dot\gamma_2,\Phi]+\delbar_A\dot\gamma_2\wedge[\dot\gamma_1,\Phi]\Bigg)
					\notag\\
			&=\Omega-\mathbbm i\int_C\tr\left(\delbar_A\left(-\dot\gamma_1\wedge\dot\Phi_2+\dot\gamma_2\wedge\dot\Phi_1-[\dot\gamma_1,\dot\gamma_2]\Phi\right)\right)
					\notag\\
			&=\Omega-\mathbbm i\sum_{i=1}^n\tr\left(-\dot\gamma_1(p_i)\res_{p_i}\dot\Phi_2+\dot\gamma_2(p_i)\res_{p_i}\dot\Phi_2-[\dot\gamma_1(p_i),\dot\gamma_2(p_i)]\res_{p_i}\Phi\right)
	\end{align}
	}
	Since $\dot\gamma_j(p_i)$ is parabolic and $\res_{p_i}\dot\Phi_j$ is nilpotent, the first two parts vanish under trace.  Decompose $\dot\gamma_j(p_i)=B_{j,i}^\mathrm{Levi}+B_{j,i}^\mathrm{nil}$ into Levi and nilpotent parts, and similarly decompose $\res_{p_i}\Phi=C_{i}^\mathrm{Levi}+C_{i}^\mathrm{nil}$.  Only the Levi parts pair nontrivially, so
		\[\tr\left([\dot\gamma_1,\dot\gamma_2]\res_{p_i}\Phi\right)=\tr\left([B_{1,i}^\mathrm{Levi},B_{2,i}^\mathrm{Levi}]C_{i}^\mathrm{Levi}\right)
		\]
	since $C_{i}^\mathrm{Levi}\in Z(\mathfrak l_i)$.  Therefore $\Omega'=\Omega$.
\end{proof}
\newpage
\bibliography{Main}
\bibliographystyle{math}

\end{document}